\newtheorem{thrm}{Theorem}[section]
\newtheorem{lem}{Lemma}[section]
\newtheorem{remark}{Remark}[section]
\begin{document}

\begin{frontmatter}

%% Title, authors and addresses

%% use the tnoteref command within \title for footnotes;
%% use the tnotetext command for theassociated footnote;
%% use the fnref command within \author or \address for footnotes;
%% use the fntext command for theassociated footnote;
%% use the corref command within \author for corresponding author footnotes;
%% use the cortext command for theassociated footnote;
%% use the ead command for the email address,
%% and the form \ead[url] for the home page:
%% \title{Title\tnoteref{label1}}
%% \tnotetext[label1]{}
%% \author{Name\corref{cor1}\fnref{label2}}
%% \ead{email address}
%% \ead[url]{home page}
%% \fntext[label2]{}
%% \cortext[cor1]{}
%% \affiliation{organization={},
%%             addressline={},
%%             city={},
%%             postcode={},
%%             state={},
%%             country={}}
%% \fntext[label3]{}

\title{Improved uniform error bounds  for long-time dynamics of the  high-dimensional nonlinear space
fractional sine-Gordon equation with weak nonlinearity}

%% use optional labels to link authors explicitly to addresses:
%% \author[label1,label2]{}
%% \affiliation[label1]{organization={},
%%             addressline={},
%%             city={},
%%             postcode={},
%%             state={},
%%             country={}}
%%
%% \affiliation[label2]{organization={},
%%             addressline={},
%%             city={},
%%             postcode={},
%%             state={},
%%             country={}}

\author[1]{Junqing Jia}
\ead{matjiajq@sdu.edu.cn}
\author[2]{Xiaoqing Chi\corref{cor1}}
\ead{cxqwh@sdu.edu.cn}
\author[1]{Xiaoyun Jiang}
\ead{wqjxyf@sdu.edu.cn}
\cortext[cor1]{Corresponding author. }
\address[1]{School of Mathematics, Shandong University, Jinan 250100, PR China}
\address[2]{ School of Mathematics and Statistics, Shandong University, Weihai 264209, PR China.}

%%\affiliation{organization={},%Department and Organization
%            addressline={},
%            city={},
%            postcode={},
%            state={},
%            country={}}

\begin{abstract}

In this paper, we derive the improved uniform error bounds for the long-time dynamics of the $d$-dimensional $(d=2,3)$ nonlinear space fractional sine-Gordon equation (NSFSGE). The nonlinearity strength of the NSFSGE is characterized by $\varepsilon^2$ where $0<\varepsilon \le 1$ is a dimensionless parameter.  The second-order time-splitting method is applied to the temporal discretization and the Fourier  pseudo-spectral method is used for the spatial discretization. To obtain the explicit relation between the numerical errors and the parameter $\varepsilon$,  we introduce the regularity compensation oscillation technique to the convergence analysis of fractional models. Then we establish the improved uniform error bounds $O\left(\varepsilon^2 \tau^2\right)$ for the semi-discretization scheme and $O\left(h^m+\varepsilon^2 \tau^2\right)$ for the full-discretization scheme up to the long time at $O(1/\varepsilon^2)$.  Further, we extend the time-splitting  Fourier pseudo-spectral method to the complex NSFSGE as well as the oscillatory complex NSFSGE, and the improved uniform error bounds for them are also given. Finally, extensive numerical examples  in two-dimension  or three-dimension are provided to support the theoretical analysis. The differences in dynamic behaviors between the fractional sine-Gordon equation and classical  sine-Gordon equation are also discussed.

\end{abstract}

%%Graphical abstract
%\begin{graphicalabstract}
%%\includegraphics{grabs}
%\end{graphicalabstract}

%%Research highlights
%\begin{highlights}
%\item Research highlight 1
%\item Research highlight 2
%\end{highlights}

\begin{keyword}
nonlinear space fractional sine-Gordon equation\sep long-time dynamics\sep time-splitting method\sep regularity compensation oscillation \sep improved uniform error bounds
\end{keyword}

\end{frontmatter}

%% \linenumbers

%% main text
\section{Introduction}
Nonlinear wave equations and their dynamic properties explain the rich and colorful natural phenomena reasonably, such as the wave propagation, smooth scattering, emission and absorption in electromagnetism, superelastic material \cite{2403,2404,2402,2401}. Some well-known nonlinear wave equations are  Schr\"odinger equations,  Klein-Gordon equations, sine-Gordon equations, Korteweg-deVries equation, Burgers equation and so on. As an important class of nonlinear hyperbolic equations, the sine-Gordon equation has many soliton solutions. So it is widely used in biophysics, fluid motion,  quantum mechanics, nonlinear media super transport, plasma physics and other scientific fields \cite{1}. A lot of studies have been done for the analytical analysis and numerical research on the  soliton solutions of the sine-Gordon \cite{i1,4,2405}.

In classical nonlinear partial differential equations, the state of a point is directly affected by its nearest points. However, in many practical physical phenomena, the state of one point can be affected by points farther away, which is called the non-locality and remote correlation. Considering that fractional operators have nonlocality and memory properties, fractional models have great advantages in describing the above phenomena. Some studies have shown that space fractional models can relatively accurately describe complex phenomena such as anomalous diffusion transport, remote interactions and fractal dispersion \cite{24010, 2406}. In this view, more and more classical models have been expressed in fractional systems \cite{i2,8,7,5}. The space fractional sine-Gordon equation is an extension of the classical sine-Gordon equation, which is an important dynamic model with remote interactions in nonlinear science \cite{16,9001,9002}. For instance,  Korabel et al. \cite{9001} discovered  the soliton-like and breather-like patterns of the fractional sine-Gordon equation. Mac\'ias-D\'iaz \cite{9002} numerically proved the existence of nonlinear supertransmission in the space fractional sine-Gordon system.

%Resently, in order to describe wave propagation in complex media well, many partial differential equations  in the study of wave phenomena are extended to consider non-local effects \cite{2406}.
%This is mainly because fractional models can effectively simulate various physical phenomena and  especially have stronger advantages in engineering processes, biological system, abnormal diffusion and non-exponential relaxation models.

In this paper, we consider the following high-dimensional nonlinear space fractional sine-Gordon equation (NSFSGE)
\begin{equation}
\left\{\begin{array}{l}
\partial_{t t} \psi(\mathbf{x}, t)+(-\Delta)^{\alpha/2} \psi(\mathbf{x}, t)+\sin (\psi(\mathbf{x}, t))=0, \quad \mathbf{x} \in \Omega, \quad t>0, \\
\psi(\mathbf{x}, 0)=\varepsilon \psi_0(\mathbf{x}), \quad \partial_t \psi(\mathbf{x}, 0)=\varepsilon \psi_1(\mathbf{x}), \quad \mathbf{x} \in \bar{\Omega},
\end{array}\right.
\label{a0}
\end{equation}
with periodic boundary conditions. $\psi:=\psi(\mathbf{x}, t)$ is a real-valued function, $\mathbf{x}$ is the spatial coordinate, $t$ is the time variable, and $\Omega=\prod_{i=1}^{d}(a_i,b_i)\subset \mathbb{R}^d (d=2,3)$ is a compact domain.  $\psi_0(\mathbf{x})$, $\psi_1(\mathbf{x})$ are two known real initial functions  independent of $\varepsilon$. $\varepsilon \in(0,1]$ is a dimensionless parameter to  depict the nonlinearity strength of Eq. (\ref{a0}).  $(-\Delta)^{^{\alpha/2} }(1<\alpha \leq 2)$ is the space fractional Laplacian, which is defined by \cite{901,902}
$$
(-\Delta)^{^{\alpha/2} } \psi(\mathbf{x}, t)=\mathscr{F}^{-1}\left(|\bm{\omega}|^\alpha \widehat{\psi}(\bm{\omega}, t)\right), \quad \widehat{\psi}(\bm{\omega}, t)=\mathscr{F} \psi(\bm{\omega}, t),
$$
where $\bm{\omega}$ is the $d$-dimensional vector, $\mathscr{F}(\psi)$ denotes the Fourier transform of $z$ and  $\mathscr{F}^{-1}$ stands for its inverse transform. Considering  that $\Omega$ is a bounded domain, the fractional Laplace operator  $(-\Delta)^{^{\alpha/2} }(1<\alpha \leq 2)$  can be  defined by finite Fourier series as \cite{tg1, 15}
$$
(-\Delta)^{^{\alpha/2} } \psi(\mathbf{x}, t)=\sum_{\mathbf{k} \in \mathbb{Z}^d}|\mu\mathbf{k}|^\alpha\widehat{\psi}_{\mathbf{k}} \mathrm{e}^{i \mu\mathbf{k}(\mathbf{x}-a)},$$
here  $i$ is the imaginary unit, $|\mu\mathbf{k}|^\alpha=(\sum_{i=1}^{d}\left|\mu_i\right|^2)^{\alpha/2}$, $\mu\mathbf{k}(\mathbf{x}-a)=\sum_{i=1}^{d}\mu_i(x_i-a_i)$, $\mu_i= \frac{2\pi k_i}{b_i-a_i}$, where $x_i$, $k_i$ represent the component of $\mathbf{x}$, $\mathbf{k}$, respectively. The Fourier coefficients can be given as
$$
\widehat{\psi}_{\mathbf{k}}=\frac{1}{\prod_{i=1}^{d}(b_i-a_i)}\int_{\Omega}\psi(\mathbf{x}, t)\mathrm{e}^{-i \mu\mathbf{k}(\mathbf{x}-a)}d\mathbf{x}.
$$
When $\alpha=2$, the NSFSGE (\ref{a0}) is reduced to the classical sine-Gordon equation.

When $0<\varepsilon\ll 1$, we introduce $ u(\mathbf{x}, t)= \psi(\mathbf{x}, t)/\varepsilon$. The NSFSGE (\ref{a0}) with $O(\varepsilon)$ initial data and $O(1)$ nonlinearity can be turned  to the following NSFSGE: %with $O(1)$ initial data and $O(\varepsilon)$ nonlinearity:
\begin{equation}
\left\{\begin{array}{l}
\partial_{t t} u(\mathbf{x}, t)+(-\Delta)^{\alpha/{2}} u(\mathbf{x}, t)+\frac{1}{\varepsilon} \sin (\varepsilon u(\mathbf{x}, t))=0, \quad \mathbf{x} \in \Omega, \quad t>0, \\
%u(a,t)=u(b,t), \partial_{x}u(a,t)=\partial_{x}u(b,t),\\
u(\mathbf{x}, 0)=u_0(\mathbf{x}), \quad \partial_t u(\mathbf{x}, 0)=u_1(\mathbf{x}), \quad \mathbf{x} \in\bar{\Omega}.
\end{array}\right.
\label{a1}
\end{equation}
where $u_0(\mathbf{x})=\psi_0(\mathbf{x})$, $u_1(\mathbf{x})=\psi_1(\mathbf{x})$. For  Eq. (\ref{a1}), we use the Taylor expansion $\sin(u)=u-\frac{u^3}{6}+O(u^5)$ and give the leading order behevior of the solution is
\begin{equation}\label{a2}
\partial_{t t} u(\mathbf{x}, t)+(-\Delta)^{\alpha/{2}} u(\mathbf{x}, t)+u(\mathbf{x}, t)-\frac{\varepsilon^2 u^3(\mathbf{x}, t)}{6}=0,
\end{equation}
which shows the lifespan of Eq. (\ref{a1}) is at least up to $O(1/\varepsilon^2)$ according to  the research for the nonlinear Klein-Gordon equation provided with the cubic nonlinearlity \cite{903,904}. Note that the long-time dynamics of the NSFSGE (\ref{a0}) is equivalent to that of the  NSFSGE (\ref{a1}). Then we only discuss the error estimation of the long-time dynamics for the NSFSGE (\ref{a1}) with weak nonlinearity at time $O(1/\varepsilon^2)$ and the relevant conclusions can be easily generalized to Eq. (\ref{a0}).

When $\varepsilon=1$, the NSFSGE (\ref{a1}) is  in the standard classical regime and there are many  numerical studies  for it \cite{16,15, 10, 12, 14}.  Ran and Zhang \cite{12} proposed a compact difference scheme with convergence accuracy of fourth-order in space and second-order in time to solve the NSFSGE. Alfimov et al. \cite{14} numerically explored the breather-like solution of the NSFSGE by using the rotating wave approximation method.  A dissipation-preserving Fourier pseudo-spectral method was given for the NSFSGE with damping in  \cite{15}. Fu et al. \cite{16} proposed a linearly implicit structure-preserving numerical scheme for the NSFSGE, in which the stability and convergence in the maximum norm of the numerical scheme were given.   However, those numerical method and error estimation are generally effective up to the time at $O(1)$. When $0<\varepsilon\ll1$, the lifespan of the solution for the NSFSGE (\ref{a1}) is up to the time at $O(1/\varepsilon^2)$. An important work is to extend those classical error bounds for the NSFSGE (\ref{a1}) up to the time at $O(1/\varepsilon^2)$ instead of $O(1)$, i.e., the long-time error analysis.

%%%The authors \cite{14} numerically explored the breather-like solution of fractional sine-Gordon equation by using the rotating wave approximation method. Dehghan et al. \cite{17} developed the meshless method based on the radial basis functions and collocation approach for solving the time fractional nonlinear sine-Gordon.
%%%%%However, in long-time simulations, the errors generated by low order time discretization schemes will gradually accumulate and become larger over time, resulting in inaccurate numerical approximation.

The long-time dynamics of the classical  nonlinear equation with weak nonlinearity has attracted much attention \cite{2100, 19, 48, 18, 21,20}.  Cohen et al. \cite{18} used a modulated Fourier expansion  to carry out  the long-time analysis of the nonlinearly perturbed wave equations. Bao et al. \cite{19} established improved uniform error bounds on a second-order time-splitting method for the nonlinear Schr\"odinger equation up to time at $O(1/\varepsilon^2)$ by introducing a new technique of regularity compensation oscillation. The paper \cite{21}  presented the long time error analysis of the fourth-order compact finite difference methods for the nonlinear Klein-Gordon equation.  An exponential wave integrator Fourier pseudo-spectral method for  the long-time dynamics of the nonlinear Klein-Gordon  equation was developed in \cite{20}. As far as we know, the existing research on long-time dynamics is focused on integer  equations and there is  limited research on long-time dynamics of  nonlinear fractional equations.
For the long-time dynamics of nonlinear fractional  wave equations, the behaviors of the plane wave solutions are significantly different from that of classical equations and the fractional order can  affect  the width and height of the solution \cite{2101,2102}. So it is  an interesting and challenging research topic for proposing an  effective numerical method and  error estimation for long-time nonlinear fractional models.
%%Cohen et al. \cite{18} used a modulated Fourier expansion  to carry out  the long-time analysis of the nonlinearly perturbed wave equations. Bao et al. \cite{19} introduced a new technique of regularity compensation oscillation  to get the improved uniform error bounds for the time-splitting methods for the long-time dynamics of the nonlinear Schr\"odinger equation.  Bao et al. \cite{1801} established the uniform error bounds of time-splitting spectral methods for the long-time dynamics of the nonlinear Klein-Gordon equation, which gives the explicit dependency between errors and $\varepsilon$.

The aim of this paper is to obtain the improved uniform error bounds  in $H^{\alpha/2}$-norm  of the high-dimensional NSFSGE (\ref{a1}) up to the time at
$T_{\varepsilon}=T/\varepsilon^2$ with $T$ fixed. The inherent differences in dynamic behaviors between the fractional sine-Gordon equation and classical  sine-Gordon equation are discussed. The difficulties are the numerical analysis of fractional operators and how to give the explicit dependence of the errors and the parameter $\varepsilon$ in fractional equations. The main contributions of this paper can be listed as

$\bullet$ To obtain the uniform error bounds for the long-time dynamics,  one of the tricks used here is that a linear part from the sine function  of the NSFSGE is separated. Then we transform the NSFSGE to an equivalent relativistic nonlinear space fractional Schr\"odinger equation (NSFSE). With the help of the Strang splitting technique \cite{22,23}, we decompose the NSFSE into a linear equation and a nonlinear equation, and the second-order time-splitting  Fourier pseudo-spectral  method is developed.
%A time-splitting Fourier pseudo-spectral  method  are developed to solve the NSFSGE, which can obtain the second-order convergence accuracy in time and  the spectral convergence accuracy in space.

$\bullet$ We introduce the  regularity compensation oscillation technique  to the convergence analysis of fractional models, which the high-frequency modes are controlled by regularity and the low-frequency modes are analyzed by phase cancellation as well as energy method, then we give the improved uniform error bounds  $O\left(\varepsilon^2 \tau^2\right)$ for the semi-discretization scheme and $O\left(h^m+\varepsilon^2 \tau^2\right)$ for the full-discretization scheme. The explicit relation of the  errors and the parameter $\varepsilon$ is specified.

$\bullet$ Compared with the existing references \cite{48, 49},  we prove the error bounds for the fully discretization directly by the mathematical induction  without comparing with the semi-discretization in time, so the bound of the semi-discrete numerical solution $\lVert\varphi^{[n]}\rVert_{H^{m+\alpha/2}}$  is not needed in this paper. %This proof is different form \cite{48, 49}.
%In\cite{48, 49}, the improved error bounds for the fully discretization are mainly based on the error bounds of the semi-discretization and the bound of $\lVert\varphi^{[n]}\rVert_{H^{m+\alpha/2}}$ is necessary to obtain the space order $h^m$. However, the bound of $\lVert\varphi^{[n]}\rVert_{H^{m+\alpha/2}}$ is not available for the nonlinear equation. Here, we prove the error bounds for the fully discretization directly by the mathematical induction, in which the bound of $\lVert\varphi^{[n]}\rVert_{H^{m+\alpha/2}}$ is not needed.

%$\bullet$  We extend the time-splitting Fourier pseudo-spectral method and the improved uniform error bounds  to the complex NSFSGE and the oscillatory complex  NSFSGE. Some numerical examples in two-dimension or three-dimension  to support theoretical analysis and the differences of in dynamic behaviors between the fractional sine-Gordon equation and classical  sine-Gordon equation are discussed.

The organization of this paper is as follows. In Section \ref{sec:2}, we give some notation and use a second-order time-splitting  Fourier pseudo-spectral method to obtain the numerical scheme of the NSFSGE (\ref{a1}). Section \ref{sec:3} gives the improved uniform error bounds for the semi-discretization scheme and full discretization scheme by  the regularity compensation oscillation  technique, respectively. In Section \ref{sec:4}, we extend the time-splitting Fourier pseudo-spectral method and corresponding error bounds to the complex NSFSGE as well as the oscillatory complex NSFSGE. Section \ref{sec:5}  verifies the validity of the numerical scheme and error bounds through some specific examples. Finally, we give some conclusions in Section \ref{sec:6}.
\section{Numerical method}
\label{sec:2}
For convenience, we only give the numerical scheme and error estimation
for the NSFSGE (\ref{a1}) in the two-dimension (2D). It is similar for the NSFSGE (\ref{a1}) in the three-dimension (3D). In 2D, we further assume $\Omega=(a,b)^2$ for simplification,  Eq. (\ref{a1}) with periodic boundary conditions can be expressed  as
\begin{equation}
\left\{\begin{array}{l}
\partial_{t t} u(x, y, t)+(-\Delta)^{\alpha/2} u(x, y, t)+\frac{1}{\varepsilon} \sin (\varepsilon u(x, y, t))=0, \quad (x,y) \in \Omega, \quad t>0,\\
u(x, y, 0)=u_0(x, y), \quad \partial_t u(x, y, 0)=u_1(x, y), \quad (x,y) \in \bar{\Omega}.
\end{array}\right.
\label{b1}
\end{equation}
%In this section, we adopt the time-splitting method  in time and Fourier pseudo-spectral method  in space to discretize Eq.(\ref{b1}).
 In the remainder of this paper, let $C>0$ denote positive constants independent of the time step $\tau$, the space step $h$ and $\varepsilon$. The notation $A\lesssim B$ is adopted to denote $|A|\le CB$.
\subsection{Preliminary}
%Let $\tau>0$ be the time step and $h=(b-a) / N$ be the space step, where $N$ is an even positive integer. The grid points are denoted as
%$$
%t_n:=n \tau, \quad n=0,1,2, \cdots,
%$$
%$$
%x_p:=a+p h, \quad y_q:=a+q h, \quad p,q=0,1,2, \cdots, N.
%$$
Let $L^2(\Omega)$ be the standard $L^2$-space, $C_{p e r}(\Omega)=\{u \in C(\bar{\Omega}) \mid \text{$u$ is periodic} \}$  and
$$
 Y_N:=\operatorname{span}\left\{\mathrm{e}^{i \mu_{k}(x-a)+i\mu_{l}(y-a)}: \mu_j=\frac{2 \pi j}{b-a}, k,l\in T_N\right\}, $$
 $$
 T_N=\left\{j\mid j=-\frac{N}{2},-\frac{N}{2}+1, \ldots, \frac{N}{2}-1 \right\},
 $$
here $N$ is an even positive integer. Denote $P_N: L^2(\Omega) \rightarrow Y_N$ as the standard $L^2$-projection operator, $I_N: C_{p e r}(\Omega) \rightarrow Y_N$ as the trigonometric interpolation operator, i.e.,
$$
P_N u(x, y)=\sum_{k,l \in T_N} \widehat{u}_{kl} \mathrm{e}^{i \mu_{k}(x-a)+i\mu_{l}(y-a)}, \quad I_N u(x, y)=\sum_{k,l \in T_N} \tilde{u}_{kl} \mathrm{e}^{i \mu_{k}(x-a)+i\mu_{l}(y-a)}, \quad (x, y) \in \bar{\Omega},
$$
where
$$
\widehat{u}_{kl}=\frac{1}{(b-a)^2} \int_{\Omega} u(x,y) \mathrm{e}^{-i \mu_k(x-a)-i \mu_l(y-a)} {d} {\Omega}, \quad \tilde{u}_{kl}=\frac{1}{N^2} \sum_{p=0}^{N-1}\sum_{q=0}^{N-1} u_{pq} \mathrm{e}^{-i \mu_k\left(x_p-a\right)-i \mu_l\left(y_q-a\right)},
$$
and $u_{pq}$  is defined as $u\left(x_p, y_q\right)$.

For $m \geq 0, H^m(\Omega)$ is the space of functions $u(x, y) \in L^2(\Omega)$, the $H^m$-norm $\|\cdot\|_m$ is defined by
$$
\|u\|_m^2=\sum_{k,l \in \mathbb{Z}}\left(1+\left|\mu_k\right|^2+\left|\mu_l\right|^2\right)^m\left|\widehat{u}_{kl}\right|^2, \quad \text { for } \quad u(x, y)=\sum_{k,l \in \mathbb{Z}} \widehat{u}_{kl} \mathrm{e}^{i \mu_k(x-a)+i \mu_l(y-a)},
$$
where $\widehat{u}_{kl}(k,l \in \mathbb{Z})$ denotes the Fourier coefficients of $u(x,y)$. Furthermore, $H^0(\Omega)=$ $L^2(\Omega)$ and  $\|\cdot\|$ denotes the $L^2-$norm $\|\cdot\|_0$. We denote $L^{\infty}([0,T];H^m(\Omega))$ with $T>0$ as the space of the functions $u:[0,T]\to H^m(\Omega)$ such that
%\begin{equation*}
%L^{\infty}(0,T;H^m(\Omega))=\bigg\{u:(0,T)\to H^m(\Omega) \bigg| \lVert v\rVert_{L^{2}(Y)}= \big( \int^T_0 \lVert v\rVert^2_Y dt\big)^{\frac{1}{2}}<+\infty.\bigg\},
%\end{equation*}
%and is endowed with the norm
$$
\lVert u\rVert_{L^{\infty}([0,T];H^m(\Omega))}= \mathop{esssup}\limits_{0\le t\le T}\lVert u(t)\rVert_m<+\infty.
$$

Define the operator $\langle\nabla\rangle_\alpha=\sqrt{1+(-\Delta)^{{\alpha}/{2}}}$ as
$$
\langle\nabla\rangle_\alpha u(x, y)=\sum_{k,l \in \mathbb{Z}} \sqrt{1+\left(\left|\mu_k\right|^2+\left|\mu_l\right|^2\right)^{\alpha/2}} \widehat{u}_{kl} \mathrm{e}^{i \mu_k(x-a)+i \mu_l(y-a)}
$$
and the inverse operator $\langle\nabla\rangle_\alpha^{-1}$ as
$$
\langle\nabla\rangle_\alpha^{-1} u(x, y)=\sum_{k, l \in \mathbb{Z}} \frac{\widehat{u}_{kl}}{\sqrt{1+\left(\left|\mu_k\right|^2+\left|\mu_l\right|^2\right)^{\alpha/2}}} \mathrm{e}^{i \mu_k(x-a)+i \mu_l(y-a)}.
$$
It is easy to get
$$
\left\|\langle\nabla\rangle_\alpha^{-1} u\right\|_s \lesssim\|u\|_{s-\alpha / 2} \lesssim\|u\|_s, \quad\left\|\langle\nabla\rangle_\alpha u\right\|_s \lesssim\|u\|_{s+\alpha / 2} \lesssim\|u\|_{s+1} .
$$

For Eq. (\ref{b1}),  we  first separate a linear part $u(x,y,t)$ from $\frac{1}{\varepsilon}\sin({\varepsilon u(x,y,t)})$  and get
\begin{equation}
\partial_{t t} u(x,y, t)+\langle\nabla\rangle_{{\alpha}}^2 u(x,y, t)+\frac{1}{\varepsilon} \sin (\varepsilon u(x,y, t))-u(x,y,t)=0, \quad (x,y )\in \Omega, \quad t>0.
\label{b2}
\end{equation}
Introducing $v(x,y, t)=\partial_t u(x,y, t)$ and $\varphi(x,y, t)=u(x,y, t)-i\langle\nabla\rangle_\alpha^{-1} v(x,y, t)$, we can get $\partial_t v(x,y, t)=i\langle\nabla\rangle_\alpha\partial_t \varphi(x,y, t)-i\langle\nabla\rangle_\alpha v(x,y, t)$ and $u(x,y, t)=\varphi(x,y, t)-i\langle\nabla\rangle_\alpha^{-1} v(x,y, t)$. Then Eq. (\ref{b2}) is equivalent to the following nonlinear space fractional Schr\"odinger equation (NSFSE) with periodic boundary conditions:
\begin{equation}
\left\{\begin{array}{lll}
i \partial_t \varphi(x,y, t)+\langle\nabla\rangle_\alpha \varphi(x,y, t)+\langle\nabla\rangle_\alpha^{-1}f(\frac{\varphi+\bar{\varphi}}{2})(x,y, t)=0, & (x,y) \in \Omega, \quad t>0, \\
%\varphi(a,y, t)=\varphi(b,y, t),\quad \partial_x \varphi(a, y, t)=\partial_x \varphi(b,y, t),  \quad t > 0,\\
%\varphi(x,a, t)=\varphi(x,b, t),\quad \partial_y \varphi(x, a, t)=\partial_x \varphi(x,b, t), \quad t > 0,\\
\varphi(x,y, 0)=\varphi_0(x,y):=u_0(x,y)-i\langle\nabla\rangle^{-1} u_1(x,y), \quad (x,y) \in \bar{\Omega},
\end{array}\right.
\label{b3}
\end{equation}
where $f(u)=\frac{1}{\varepsilon} \sin (\varepsilon u(x,y, t))-u(x,y,t)$, $\bar{\varphi}$ is the complex conjugate of $\varphi$. Note that the solution of Eq. (\ref{b1}) is
\begin{equation}
u(x,y, t)=\frac{1}{2}(\varphi(x,y, t)+\bar{\varphi}(x,y, t)), \quad v(x,y, t)=\frac{i}{2}\langle\nabla\rangle_\alpha(\varphi(x,y, t)-\bar{\varphi}(x,y, t)) .
\label{b31}
\end{equation}

\subsection{Numerical scheme}

%Denote $\varphi(t):=\varphi(x, y, t)$, the exact solution to (\ref{b3}) can be obtained by using the variation-of-constants formula,
%\begin{equation}
%\varphi\left(t_n+\tau\right)=\mathrm{e}^{i \tau\langle\nabla\rangle \alpha} \varphi\left(t_n\right)+ \int_0^\tau \mathrm{e}^{i(\tau-s)\langle\nabla\rangle_\alpha} F\left(\varphi\left(t_n+s\right)\right) \mathrm{d} s,
%\label{b4}
%\end{equation}
%where the nonlinear operator $F$ has the following expression
%$$F(\varphi)=i\langle\nabla\rangle_\alpha^{-1}f(\frac{\varphi+\bar{\varphi}}{2}),\quad f(\frac{\varphi+\bar{\varphi}}{2})=\frac{1}{\varepsilon}\sin(\frac{\varepsilon(\varphi+\bar{\varphi})}{2})-\frac{\varphi+\bar{\varphi}}{2}.$$

By the Strang splitting technique, the relativistic NSFSE (\ref{b3}) can be  decomposed into a linear part and a nonlinear part.  The linear part is
\begin{equation}
\left\{\begin{array}{l}
\partial_t \varphi(x,y, t)=i\langle\nabla\rangle_\alpha \varphi(x,y, t),\\
\varphi(x,y, 0)=\varphi_0(x,y),
\end{array}\right.
\label{b41}
\end{equation}
and  it can be solved exactly in phase space  as
\begin{equation}
\varphi(x, y, t)=\phi_T^t(\varphi_0)=\mathrm{e}^{it\langle\nabla\rangle_\alpha}\varphi_0(x, y),\quad t\ge 0.
\label{b5}
\end{equation}
The nonlinear part is
\begin{equation}
\left\{\begin{array}{l}
\partial_t \varphi(x, y, t)=F(\varphi(x,y, t)),\\
\varphi(x,y, 0)=\varphi_0(x,y),
\end{array}\right.
\label{b42}
\end{equation}
where the nonlinear operator $F$ has the following expression
$$F(\varphi)=i\langle\nabla\rangle_\alpha^{-1}f(\frac{\varphi+\bar{\varphi}}{2}),\quad f(\frac{\varphi+\bar{\varphi}}{2})=\frac{1}{\varepsilon}\sin(\frac{\varepsilon(\varphi+\bar{\varphi})}{2})-\frac{\varphi+\bar{\varphi}}{2}.$$  Then (\ref{b42}) can be integrated exactly in time as
\begin{equation}
\varphi(x,y, t)=\phi_V^t(\varphi_0)=\varphi_0(x,y)+tF(\varphi_0(x, y)),\quad t\ge 0.
\label{b6}
\end{equation}

 %The the evolution operator for the linear part $\partial_t \varphi(x,y, t)=i\langle\nabla\rangle_\alpha \varphi(x,y, t)$  with initial data $\varphi(x,y, 0)=\varphi_0(x,y)$ is given by
%\begin{equation}
%\varphi(x,y, t)=e^{it\langle\nabla\rangle_\alpha}\varphi_0(x,y),\quad t\ge 0,
%\label{b5}
%\end{equation}
%and the nonlinear part $\partial_t \varphi(x, y, t)=F(\varphi(x,y, t))$ with initial data $\varphi(x,y, 0)=\varphi_0(x,y)$ can be integrated in time as
%\begin{equation}
%\varphi(x,y, t)=\varphi_0(x,y)+tF(\varphi_0(x, y)),\quad t\ge 0.
%\label{b6}
%\end{equation}

Let $\tau>0$ denote the time step  and $t_n:=n \tau, n=0,1,2, \cdots.$
We define $\varphi^{[n]}:=\varphi^{[n]}(x,y)$ as the time numerical approximation of $\varphi\left(x, y, t_n\right)$.  Based on the Strang splitting, the second-order time-splitting method for the relativistic NSFSE (\ref{b3}) can be given as
\begin{equation}
\varphi^{[n+1]}=S_{\tau}(\varphi^{[n]})=\phi_T^{\frac{\tau}{2}}\circ\phi_V^{\tau}\circ\phi^{\frac{\tau}{2}}_T(\varphi^{[n]})=\mathrm{e}^{i \tau\langle\nabla\rangle_{\alpha}} \varphi^{[n]}+ \tau \mathrm{e}^{i \frac{\tau\langle\nabla\rangle_{\alpha}}{2}} F\left(\mathrm{e}^{i \frac{\tau\langle\nabla\rangle_{\alpha}}{2}} \varphi^{[n]}\right),
\label{b7}
\end{equation}
and $\varphi^{[0]}=\varphi_0=u_0-i\langle\nabla\rangle_\alpha^{-1} u_1$. Hence the time discrete scheme for Eq. (\ref{a1}) is
\begin{equation}
u^{[n+1]}=\frac{1}{2}\left(\varphi^{[n+1]}+\overline{\varphi^{[n+1]}}\right), \quad v^{[n+1]}=\frac{i}{2}\langle\nabla\rangle_\alpha\left(\varphi^{[n+1]}-\overline{\varphi^{[n+1]}}\right), \quad n=0,1, \ldots,
\label{b8}
\end{equation}
with $u^{[0]}=u_0(x, y)$ and $v^{[0]}=u_1(x, y)$. Here, $u^{[n]}:=u^{[n]}(x, y)$ and $v^{[n]}:=v^{[n]}(x, y)$ are the numerical approximations of $u\left(x, y, t_n\right)$ and $v\left(x, y, t_n\right)$, respectively.

Next, we use the Fourier pseudo-spectral method \cite{24} for the spatial approximation. Let  $h=(b-a) / N$ be the space step. The grid points are denoted as $ x_p:=a+p h, \quad y_q:=a+q h, \quad p,q=0,1,2, \cdots, N.$  %%%and  $\varphi^n=\left(\varphi_0^n, \varphi_1^n, \ldots, \varphi_N^n\right)^T \in \mathbb{C}^{N+1}$
Denote $\varphi_{pq}^n$ as the numerical approximation of $\varphi\left(x_p, y_q, t_n\right)$,   $\varphi^n=\left(\varphi_{00}^n, \varphi_{10}^n,\ldots, \varphi_{N0}^n, \ldots, \varphi_{0N}^n, \ldots, \varphi_{NN}^n, \right)^T \in \mathbb{C}^{(N+1)\times (N+1)}$, where $\varphi_{0q}^n=\varphi_{Nq}^n=\varphi_{p0}^n=\varphi_{pN}^n$, then a
time-splitting Fourier pseudo-spectral (TSFP) method for (\ref{b3}) is
\begin{equation}\label{b9}
\begin{split}
\varphi_{pq}^{(1)}&=\sum_{k,l \in T_N} \mathrm{e}^{i \frac{\tau \delta_{kl}}{2}} (\widetilde{\varphi^n})_{kl} \mathrm{e}^{i \mu_k\left(x_p-a\right)+
i \mu_l\left(y_q-a\right)},\\
\varphi_{pq}^{(2)}&=\varphi_{pq}^{(1)}+\tau F_{pq}^n, \quad
F_{pq}^n=\sum_{k,l \in T_N}\frac{i}{\delta_{kl}}\left(\widetilde{f(\frac{\varphi^{(1)}+\overline{\varphi^{(1)}}}{2})}\right)_{kl}\mathrm{e}^{i \mu_k\left(x_p-a\right)+
i \mu_l\left(y_q-a\right)},\\
\varphi_{pq}^{n+1}&=\sum_{k,l \in T_N} \mathrm{e}^{i \frac{\tau \delta_{kl}}{2}} (\widetilde{\varphi^{(2)}})_{kl} \mathrm{e}^{i \mu_k\left(x_p-a\right)+
i \mu_l\left(y_q-a\right)},
\end{split}
\end{equation}
with $\varphi_{pq}^0=u_0\left(x_p,y_q\right)-i \sum\limits_{k,l \in T_N} \frac{\widetilde{\left(u_1\right)_{kl}}}{\delta_{kl}} \mathrm{e}^{i \mu_k\left(x_p-a\right)+i \mu_l\left(y_q-a\right)}$ and $\delta_{kl}=\sqrt{1+\left(\mu_k^2+\mu_l^2\right)^{\alpha/2}}$.

%the fully discrete scheme for (\ref{b3}) is
%\begin{equation}
%\varphi_{pq}^{n+1}=\sum_{k,l \in T_N} \mathrm{e}^{i \tau \delta_{kl}} \widetilde{\varphi}_{kl}^n \mathrm{e}^{i \mu_k\left(x_p-a\right)+
%i \mu_l\left(y_q-a\right)}+\tau \sum_{k,l \in T_N}
%\mathrm{e}^{i \frac{\tau\delta_{kl}}{2}}\widetilde{F}_{kl}^n\mathrm{e}^{i \mu_k\left(x_p-a\right)+i \mu_l\left(y_q-a\right)},
%\label{b9}
%\end{equation}
%with $\varphi_{pq}^0=z_0\left(x_p,y_q\right)-i \sum\limits_{k,l \in T_N} \frac{\widetilde{\left(z_1\right)_{kl}}}{\delta_{kl}} \mathrm{e}^{i \mu_k\left(x_p-a\right)+i \mu_l\left(y_q-a\right)}$, where
%$$
%\delta_{kl}=\sqrt{1+\left(\left|\mu_k\right|^2+\left|\mu_l\right|^2\right)^{\alpha/2}}, \quad \widetilde{F_{kl}^{n}}=\frac{i}{\delta_{kl}}\left(\widetilde{f(\frac{\varphi_{pq}+\overline{\varphi_{pq}}}{2})}\right)_{kl}.
%$$

%Denote $u^n=\left(u_0^n, u_1^n, \ldots, u_N^n\right)^T \in \mathbb{R}^{N+1}$ and $v^n=$ $\left(v_0^n, v_1^n, \ldots, v_N^n\right)^T \in \mathbb{R}^{N+1}$.
Combining $(\ref{b31})$ and $(\ref{b9})$, the fully discrete scheme for (\ref{b1}) by the TSFP method is
\begin{equation}\label{b10}
\begin{split}
u_{pq}^{n+1}&=\frac{1}{2}\left(\varphi_{pq}^{n+1}+\overline{\varphi_{pq}^{n+1}}\right), \quad  n=0,1, \ldots,\quad  p,q=0,1, \cdots, N,
\\
v_{pq}^{n+1}&=\frac{i}{2} \sum_{k,l \in T_N} \delta_{kl}\left[(\widetilde{\varphi^{n+1}})_{kl}-(\widetilde{\overline{{\varphi^{n+1}}}})_{kl}\right] \mathrm{e}^{i \mu_k\left(x_{p}-a\right)+i \mu_l\left(y_{q}-a\right)},
\end{split}
\end{equation}
with $u_{pq}^0=u_0(x_p, y_q)$ and $v_{pq}^0=u_1(x_p, y_q)$. Here, $u_{pq}^n$ and $v_{pq}^n$ are the numerical approximations of $u\left(x_{p},y_q, t_n\right)$ and $v\left(x_{p}, y_q, t_n\right)$ respectively.

\section{Error estimation}
\label{sec:3}
We supposes the exact solution $u:=u(x,y, t)$ of Eq. (\ref{b1}) up to the time at $T_{\varepsilon}=T / \varepsilon^2$ with $T>0$  satisfies the following assumptions:
\begin{equation}\label{c0}
\text{(A)} \quad \|u\|_{L^{\infty}\left(\left[0, T_{\varepsilon}\right] ; H^{m+\alpha/2}\right)} \lesssim 1, \quad\left\|\partial_t u\right\|_{L^{\infty}\left(\left[0, T_{\varepsilon}\right] ; H^m\right)} \lesssim 1, \quad m \geq 0.
\end{equation}
Note that the  assumption (A) is equivalent to the regularity of $\|\varphi\|_{L^{\infty}\left(\left[0, T_{\varepsilon}\right] ; H^{m+\alpha/2}\right)} \lesssim 1$.

Considering that $f(u)=\frac{1}{\varepsilon} \sin (\varepsilon u)-u$, we can write  $f\left(\frac{\varphi+\bar{\varphi}}{2}\right)$ with the $O\left(\varepsilon^2\right)$ dominant term by the Taylor expansion as
\begin{equation}
f\left(\frac{\varphi+\bar{\varphi}}{2}\right)=-\frac{\varepsilon^2}{48}(\varphi+\bar{\varphi})^3+\varepsilon^4 r(\varphi)=: \varepsilon^2 h(\varphi)+\varepsilon^4 r(\varphi).
\label{c1}
\end{equation}
Define the function $H$ as
$$
H(\varphi)=\varepsilon^2 i\langle\nabla\rangle^{-1}_{\alpha} h(\varphi),
$$
and let
\begin{equation}\label{c101}
H_t: \varphi \mapsto  \mathrm{e}^{-i t\langle\nabla\rangle_{\alpha}} H\left( \mathrm{e}^{i t\langle\nabla\rangle_{\alpha}} \varphi\right), \quad t \in \mathbb{R}.
\end{equation}

Next we will give the improved uniform error bounds for the semi-discretization scheme (\ref{b7})-(\ref{b8}) and the full-discretization scheme (\ref{b9})-(\ref{b10}) up to the time at $T_{\varepsilon}$ respectively.
\subsection{Improved uniform error bounds for the semi-discretization scheme}
%Firstly, we have the following estimates for the local truncation error.
\begin{lem}
For $0<\varepsilon \leq 1$, the local error of the  time  semi-discretization scheme (\ref{b7}) by the Strang splitting for the relativistic NSFSE (\ref{b3}) can be obtained as
\begin{equation}
\mathcal{E}^n:=S_{\tau}(\varphi(t_n))-\varphi(t_{n+1}) =\mathcal{H}\left(\varphi\left(t_n\right)\right)+\mathcal{R}^n, \quad n=0,1, \ldots,
\label{c2}
\end{equation}
%\tau e^{i \frac{\tau\langle\nabla\rangle_{\alpha}}{2}} F\left(e^{i \frac{\tau\langle\nabla\rangle_{\alpha}}{2}} \varphi(t_n)\right)-\int_0^\tau \mathrm{e}^{i(\tau-s)\langle\nabla\rangle_\alpha} F\left(\varphi\left(t_n+s\right)\right)ds
with
\begin{equation}
\mathcal{H}\left(\varphi\left(t_n\right)\right)= \mathrm{e}^{i\tau\langle\nabla\rangle_{\alpha}}\left( \tau H_{\frac{\tau}{2}}\left( \varphi(t_n)\right)
-\int_0^\tau H_{s}\left(\varphi\left(t_n\right)\right)ds\right),
\label{c3}
\end{equation}
%\begin{equation}
%\mathcal{H}\left(\varphi\left(t_n\right)\right)=\tau e^{i \frac{\tau\langle\nabla\rangle_{\alpha}}{2}} H\left(e^{i \frac{\tau\langle\nabla\rangle_{\alpha}}{2}} \varphi(t_n)\right)-\int_0^\tau \mathrm{e}^{i(\tau-s)\langle\nabla\rangle_\alpha} H\left(e^{is\langle\nabla\rangle_{\alpha}}\varphi\left(t_n\right)\right)ds,
%\label{c3}
%\end{equation}
then we have the following estimates  under the assumption (A),
\begin{equation}
\left\|\mathcal{H}\left(\varphi\left(t_n\right)\right)\right\|_{\alpha/2} \lesssim \varepsilon^2 \tau^3, \quad\left\|\mathcal{R}^n\right\|_{\alpha/2} \lesssim \varepsilon^4 \tau^3.
\label{c301}
\end{equation}
%where $0<\varepsilon \leq 1$ and  $\varphi(t):=\varphi(x, y, t)$.
\label{l1}
\end{lem}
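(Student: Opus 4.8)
The plan is to expand the Strang step $S_\tau = \phi_T^{\tau/2}\circ\phi_V^\tau\circ\phi_T^{\tau/2}$ acting on the exact solution $\varphi(t_n)$ and compare it to the exact flow $\varphi(t_{n+1})$, isolating the $O(\varepsilon^2)$ part of the nonlinearity using the splitting $f=\varepsilon^2 h(\varphi)+\varepsilon^4 r(\varphi)$ from \eqref{c1}. First I would use Duhamel's formula on \eqref{b3}: writing $\varphi(t_{n+1})=\mathrm{e}^{i\tau\langle\nabla\rangle_\alpha}\varphi(t_n)+\int_0^\tau \mathrm{e}^{i(\tau-s)\langle\nabla\rangle_\alpha}\langle\nabla\rangle_\alpha^{-1} f\bigl(\tfrac{\varphi+\bar\varphi}{2}\bigr)(t_n+s)\,ds$, and then substituting $f=\varepsilon^2 h+\varepsilon^4 r$ together with the definition $H(\varphi)=\varepsilon^2 i\langle\nabla\rangle_\alpha^{-1}h(\varphi)$ and the conjugated operator $H_t$ from \eqref{c101}. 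This gives $\varphi(t_{n+1})=\mathrm{e}^{i\tau\langle\nabla\rangle_\alpha}\bigl(\varphi(t_n)+\int_0^\tau H_s(\varphi(t_n+s))\,ds\bigr) + \varepsilon^4(\text{remainder})$. On the numerical side, expanding $S_\tau$ as in \eqref{b7} and again splitting $F=H$-part $+\,\varepsilon^4 r$-part, I get $S_\tau(\varphi(t_n))=\mathrm{e}^{i\tau\langle\nabla\rangle_\alpha}\bigl(\varphi(t_n)+\tau H_{\tau/2}(\varphi(t_n))\bigr)+\varepsilon^4(\text{remainder})$. Subtracting and replacing $\varphi(t_n+s)$ by $\varphi(t_n)$ inside the integral (the difference being absorbed into $\mathcal R^n$, since $\partial_t\varphi$ carries a factor $f=O(\varepsilon^2)$ in its nonlinear part, but more carefully this replacement error is itself $O(\varepsilon^2\tau)$ relative to the integrand and must be checked) yields the decomposition \eqref{c2}–\eqref{c3}.

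For the bound $\|\mathcal H(\varphi(t_n))\|_{\alpha/2}\lesssim\varepsilon^2\tau^3$, the key observation is that $\tau H_{\tau/2}(\varphi(t_n))-\int_0^\tau H_s(\varphi(t_n))\,ds$ is exactly the midpoint-rule quadrature error for the smooth (in $s$) integrand $s\mapsto H_s(\varphi(t_n))$. By the standard midpoint quadrature error formula this equals $\tfrac{\tau^3}{24}\,\partial_s^2 H_s(\varphi(t_n))\big|_{s=\xi}$ for some $\xi\in(0,\tau)$, or more robustly a double integral of $\partial_s^2 H_s$. Since $H_s(\varphi)=\mathrm{e}^{-is\langle\nabla\rangle_\alpha}H(\mathrm{e}^{is\langle\nabla\rangle_\alpha}\varphi)$, each $s$-derivative brings down a commutator with $\langle\nabla\rangle_\alpha$; because $H(\varphi)=\varepsilon^2 i\langle\nabla\rangle_\alpha^{-1}h(\varphi)$ already contains the gain $\langle\nabla\rangle_\alpha^{-1}$ and the explicit $\varepsilon^2$, and $h(\varphi)=-\tfrac{1}{48}(\varphi+\bar\varphi)^3$ is a smooth cubic map bounded on $H^{m+\alpha/2}$ (using that $H^{m+\alpha/2}$, with $\alpha>1$ and $d\le 3$, is an algebra — here one invokes assumption (A)), the two derivatives $\partial_s^2 H_s$ stay bounded in $H^{\alpha/2}$ with an overall constant times $\varepsilon^2$. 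Unitarity of $\mathrm{e}^{i\tau\langle\nabla\rangle_\alpha}$ on every $H^s$ then gives $\|\mathcal H(\varphi(t_n))\|_{\alpha/2}\lesssim\varepsilon^2\tau^3$.

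The remainder $\mathcal R^n$ collects: (i) the $\varepsilon^4 r(\varphi)$ contributions from both the exact and numerical expansions — these carry an explicit $\varepsilon^4$ and, after the $\langle\nabla\rangle_\alpha^{-1}$ smoothing and two orders of $\tau$ from the local-error structure of Strang splitting (one $\tau$ from the integration window, one from the commutator/Taylor expansion), give $O(\varepsilon^4\tau^3)$; (ii) the error from freezing $\varphi(t_n+s)\rightsquigarrow\varphi(t_n)$ in the Duhamel integral, which is $\int_0^\tau H_s(\varphi(t_n+s))-H_s(\varphi(t_n))\,ds$; here $\varphi(t_n+s)-\varphi(t_n)=O(s)$ in $H^{\alpha/2}$ but its \emph{nonlinear} part is $O(\varepsilon^2 s)$ while the linear part $\mathrm{e}^{is\langle\nabla\rangle_\alpha}\varphi(t_n)-\varphi(t_n)$ is killed because $H_s$ is \emph{defined} via conjugation by exactly that propagator — so the genuine contribution is again $O(\varepsilon^4\tau^3)$ after one checks that the linear drift cancels against the structure of $H_s$; and (iii) higher-order Strang terms. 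I expect the main obstacle to be item (ii): making precise that the only non-cancelling variation of $\varphi$ over $[t_n,t_n+\tau]$ relevant to the leading error is the $O(\varepsilon^2)$ nonlinear drift, so that the cross term is genuinely $\varepsilon^4$ and not $\varepsilon^2$; this is exactly the place where the "linear part separated from the sine" trick and the conjugated flow $H_t$ pay off, and it requires writing the exact solution in the interaction picture $\varphi(t_n+s)=\mathrm{e}^{is\langle\nabla\rangle_\alpha}\bigl(\varphi(t_n)+\int_0^s H_\sigma(\varphi(t_n+\sigma))d\sigma+\varepsilon^4(\cdots)\bigr)$ and substituting recursively. Once that bookkeeping is done, all remaining pieces are routine estimates using the algebra property of $H^{m+\alpha/2}$, the smoothing $\|\langle\nabla\rangle_\alpha^{-1}u\|_s\lesssim\|u\|_{s-\alpha/2}$, unitarity of the free propagator, and assumption (A), yielding $\|\mathcal R^n\|_{\alpha/2}\lesssim\varepsilon^4\tau^3$.
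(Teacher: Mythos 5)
Your route is the same as the paper's: Duhamel's formula for the exact flow, the splitting $f=\varepsilon^2h+\varepsilon^4 r$ from (\ref{c1}), identification of the dominant term with the midpoint-quadrature defect $\mathrm{e}^{i\tau\langle\nabla\rangle_\alpha}\bigl(\tau H_{\tau/2}(\varphi(t_n))-\int_0^\tau H_s(\varphi(t_n))\,ds\bigr)$, and the bound of that defect through two $s$-derivatives of $\mathrm{e}^{i(\tau-s)\langle\nabla\rangle_\alpha}H(\mathrm{e}^{is\langle\nabla\rangle_\alpha}\varphi(t_n))$ using unitarity of the free propagator, the smoothing $\|\langle\nabla\rangle_\alpha^{-1}u\|_s\lesssim\|u\|_{s-\alpha/2}$, the algebra property and assumption (A); this reproduces (\ref{c4}) and the first estimate in (\ref{c301}) (as in the paper, the bound really uses $\|\varphi(t_n)\|_{\alpha/2+1}$, i.e.\ effectively $m\ge1$). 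One harmless notational slip: in your Duhamel line the integrand is $H_s(\mathrm{e}^{-is\langle\nabla\rangle_\alpha}\varphi(t_n+s))$, not $H_s(\varphi(t_n+s))$, although your later remark about the conjugation shows you mean the correct object.

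The genuine gap is the second estimate, $\|\mathcal R^n\|_{\alpha/2}\lesssim\varepsilon^4\tau^3$, exactly at your item (ii). Your observation that the free linear drift cancels is correct: in twisted variables $\mathrm{e}^{-is\langle\nabla\rangle_\alpha}\varphi(t_n+s)-\varphi(t_n)=\mathrm{e}^{it_n\langle\nabla\rangle_\alpha}\bigl(\xi(t_n+s)-\xi(t_n)\bigr)=O(\varepsilon^2 s)$. But that is where the argument stops one power of $\tau$ short: the freezing error $\int_0^\tau \mathrm{e}^{i(\tau-s)\langle\nabla\rangle_\alpha}\bigl[H(\varphi(t_n+s))-H(\mathrm{e}^{is\langle\nabla\rangle_\alpha}\varphi(t_n))\bigr]ds$ has integrand bounded by the $O(\varepsilon^2)$ Lipschitz constant of $H$ times $O(\varepsilon^2 s)$, hence the term is $O(\varepsilon^4\tau^2)$, not $O(\varepsilon^4\tau^3)$; indeed its leading part,
\begin{equation*}
-\int_0^\tau \mathrm{e}^{i(\tau-s)\langle\nabla\rangle_\alpha}\, dH\bigl(\mathrm{e}^{is\langle\nabla\rangle_\alpha}\varphi(t_n)\bigr)\Bigl[\int_0^s \mathrm{e}^{i(s-\sigma)\langle\nabla\rangle_\alpha} H\bigl(\mathrm{e}^{i\sigma\langle\nabla\rangle_\alpha}\varphi(t_n)\bigr)\,d\sigma\Bigr]ds,
\end{equation*}
genuinely scales like $\varepsilon^4\tau^2$, so no per-step Taylor or commutator bookkeeping can recover the missing factor $\tau$. (A smaller instance of the same issue sits in your item (i): replacing $\varphi(t_n+\tau/2)$ by $\mathrm{e}^{i\tau\langle\nabla\rangle_\alpha/2}\varphi(t_n)$ inside the $\varepsilon^4 r$ part costs $O(\varepsilon^6\tau^2)$, which is $\lesssim\varepsilon^4\tau^3$ only when $\varepsilon^2\lesssim\tau$.) To be fair, the paper's own proof is no more explicit: it performs the same two substitutions, names the differences $\mathcal R_1^n,\mathcal R_2^n$, and then asserts $\|\mathcal R^n\|_{\alpha/2}\lesssim\varepsilon^4\tau^3$ without argument, so you have faithfully mirrored the published proof; but as a standalone derivation the stated bound on $\mathcal R^n$ is not established by what you wrote, and the distinction matters: a crude $\varepsilon^4\tau^2$ per-step remainder, summed over the $T_\varepsilon/\tau$ steps in the proof of Theorem \ref{thrm1}, yields only $O(\varepsilon^2\tau)$ and would destroy the improved $O(\varepsilon^2\tau^2)$ bound. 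Closing this either requires expanding the freezing term once more and feeding its leading oscillatory part through the same RCO/summation-by-parts machinery used for $\mathcal H$, or exhibiting some other cancellation; it cannot be obtained from the Lipschitz estimate alone.
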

\begin{proof}
Recalling  the Duhamel's principle,  the exact solution for the relativistic NSFSE (\ref{b3}) can be written as
\begin{equation}
\varphi\left(t_n+\tau\right)=\mathrm{e}^{i \tau\langle\nabla\rangle \alpha} \varphi\left(t_n\right)+ \int_0^\tau \mathrm{e}^{i(\tau-s)\langle\nabla\rangle_\alpha} F\left(\varphi\left(t_n+s\right)\right) {d} s.
\label{c302}
\end{equation}
By (\ref{b7}) and (\ref{c302}), we obtain
\begin{equation}
\begin{split}
\mathcal{E}^n=&\tau  \mathrm{e}^{i \frac{\tau\langle\nabla\rangle_{\alpha}}{2}} F\left( \mathrm{e}^{i \frac{\tau\langle\nabla\rangle_{\alpha}}{2}} \varphi(t_n)\right)-\int_0^\tau \mathrm{e}^{i(\tau-s)\langle\nabla\rangle_\alpha} F\left(\varphi\left(t_n+s\right)\right)ds\\
=&\tau  \mathrm{e}^{i \frac{\tau\langle\nabla\rangle_{\alpha}}{2}} H\left( \mathrm{e}^{i \frac{\tau\langle\nabla\rangle_{\alpha}}{2}} \varphi(t_n)\right)-\int_0^\tau \mathrm{e}^{i(\tau-s)\langle\nabla\rangle_\alpha} H\left(\varphi\left(t_n+s\right)\right)ds\\&+\tau  \mathrm{e}^{i \frac{\tau\langle\nabla\rangle_{\alpha}}{2}}\varepsilon^4 i\langle\nabla\rangle_{\alpha}^{-1}r\left( \mathrm{e}^{i \frac{\tau\langle\nabla\rangle_{\alpha}}{2}} \varphi(t_n)\right)-\int_0^\tau \mathrm{e}^{i(\tau-s)\langle\nabla\rangle_\alpha} \varepsilon^4 i
\langle\nabla\rangle_{\alpha}^{-1} r\left(\varphi\left(t_n+s\right)\right)ds\\
=&\tau  \mathrm{e}^{i \frac{\tau\langle\nabla\rangle_{\alpha}}{2}} H\left( \mathrm{e}^{i \frac{\tau\langle\nabla\rangle_{\alpha}}{2}} \varphi(t_n)\right)-\int_0^\tau \mathrm{e}^{i(\tau-s)\langle\nabla\rangle_\alpha} H\left(\varphi\left(t_n+s\right)\right)ds+\mathcal{R}^n_1\\
=&\tau  \mathrm{e}^{i \frac{\tau\langle\nabla\rangle_{\alpha}}{2}} H\left( \mathrm{e}^{i \frac{\tau\langle\nabla\rangle_{\alpha}}{2}} \varphi(t_n)\right)
-\int_0^\tau \mathrm{e}^{i(\tau-s)\langle\nabla\rangle_\alpha} H\left(\mathrm{e}^{is\langle\nabla\rangle_\alpha} \varphi(t_n)+\int_0^s \mathrm{e}^{i(s-\sigma)\langle\nabla\rangle_\alpha} F\left(\varphi\left(t_n+\sigma\right)\right)d\sigma\right)ds+\mathcal{R}^n_1\\
=&\tau  \mathrm{e}^{i \frac{\tau\langle\nabla\rangle_{\alpha}}{2}} H\left( \mathrm{e}^{i \frac{\tau\langle\nabla\rangle_{\alpha}}{2}} \varphi(t_n)\right)-\int_0^\tau \mathrm{e}^{i(\tau-s)\langle\nabla\rangle_\alpha} H\left( \mathrm{e}^{is\langle\nabla\rangle_{\alpha}}\varphi\left(t_n\right)\right)ds+\mathcal{R}^n_1+\mathcal{R}^n_2\\
=&\mathcal{H}\left(\varphi\left(t_n\right)\right)+\mathcal{R}^n.
\end{split}
\label{c4}
\end{equation}
where $\mathcal{H}\left(\varphi\left(t_n\right)\right)$ is defined in (\ref{c3}) and $\mathcal{R}^n=\mathcal{R}_1^n+\mathcal{R}_2^n$.

Due to that the operator $\mathrm{e}^{i t\langle\nabla\rangle_{\alpha}}$ preserves the  $H^m-$norm($m>0$),   under the assumption (A), we obtain
$$
\left\|\mathcal{H}\left(\varphi\left(t_n\right)\right)\right\|_{\alpha/2} \lesssim \tau^3\left\|\partial_s^2\left( \mathrm{e}^{i (\tau-s)\langle\nabla\rangle_\alpha} H\left( \mathrm{e}^{i s\langle\nabla\rangle_\alpha} \varphi\left(t_n\right)\right)\right)\right\|_{\alpha/2} \lesssim \varepsilon^2 \tau^3\left\|\varphi\left(t_n\right)\right\|_{\alpha/2+1},
$$
and $\left\|\mathcal{R}^n\right\|_{\alpha/2} \lesssim \varepsilon^4 \tau^3$. The proof of the error bounds (\ref{c301}) is finished .
\end{proof}

The improved uniform error bounds for the semi-discretization (\ref{b7})-(\ref{b8}) are given as follows.
\begin{thrm}
For $0<\varepsilon \leq 1$, let $0<\tau_0<1$ be small enough and independent of $\varepsilon$. When $0<\tau<\gamma \frac{\pi(b-a)^{\alpha / 2} \tau_0^{\alpha / 2}}{2 \sqrt{\tau_0^\alpha(b-a)^\alpha+2^{{3\alpha}/2} \pi^\alpha\left(1+\tau_0\right)^\alpha}}$, where $\gamma \in(0,1)$ is a fixed constant, the following improved uniform error bound can be obtained under the assumption (A),
\begin{equation}
\left\|u\left(x, y, t_n\right)-u^{[n]}\right\|_{\alpha / 2}+\left\|\partial_t u\left(x, y, t_n\right)-v^{[n]}\right\| \lesssim \varepsilon^2 \tau^2+\tau_0^{m+\alpha / 2}, \quad 0 \leq n \leq T_{\varepsilon} / \tau.
\label{c5}
\end{equation}
Especially, if the exact solution is sufficiently smooth, e.g., $u, \partial_t u \in H^{\infty}$, the last term $\tau_0^{m+\alpha / 2}$ decays exponentially fast and can be ignored for small enough $\tau_0$, then the improved uniform error bound for small enough $\tau$ is
\begin{equation}
\left\|u\left(x, y, t_n\right)-u^{[n]}\right\|_{\alpha / 2}+\left\|\partial_t u\left(x, y, t_n\right)-v^{[n]}\right\| \lesssim \varepsilon^2 \tau^2, \quad 0 \leq n \leq T_{\varepsilon} / \tau.
\label{c6}
\end{equation}
\label{thrm1}
\end{thrm}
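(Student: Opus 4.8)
The plan is to prove the error bound \eqref{c5} by the \emph{regularity compensation oscillation} (RCO) technique adapted to the fractional setting, arguing directly on the full scheme. First I would introduce the error function $e^{[n]} = \varphi(t_n) - \varphi^{[n]}$ and, using the fact that the linear flow $\mathrm{e}^{it\langle\nabla\rangle_\alpha}$ is an isometry on every $H^m$, derive the standard one-step recursion $e^{[n+1]} = S_\tau(\varphi(t_n)) - S_\tau(\varphi^{[n]}) + \mathcal{E}^n$, where $\mathcal{E}^n$ is the local error split in Lemma~\ref{l1}. The term $\mathcal{R}^n$ is harmless: by \eqref{c301} it contributes $O(\varepsilon^4\tau^3)$ per step, hence at most $O(\varepsilon^4\tau^2 \cdot T_\varepsilon/\tau \cdot \tau) = O(\varepsilon^2\tau^2)$ after summation over $n\le T_\varepsilon/\tau = T/(\varepsilon^2\tau)$. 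The difficult term is the accumulation of the ``main'' local error $\mathcal{H}(\varphi(t_n))$: its naive size is $\varepsilon^2\tau^3$ per step, which after $O(1/(\varepsilon^2\tau))$ steps only gives $O(\tau^2)$, \emph{not} the desired $O(\varepsilon^2\tau^2)$. So the whole point is to gain an extra factor $\varepsilon^2$ from cancellation among the $\mathcal{H}$-terms.

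The key idea I would carry out: split each Fourier mode of $\mathcal{H}(\varphi(t_n))$ into low frequencies $|\mathbf{k}|\le \tau_0^{-1}$ (more precisely, the cutoff dictated by the admissibility condition on $\tau$ in the statement) and high frequencies. For the high-frequency part, the factor $(1+(|\mu_k|^2+|\mu_l|^2)^{\alpha/2})^{-(m+\alpha/2)/2}$ coming from the $H^{m+\alpha/2}$-regularity of $\varphi$ supplies the bound $O(\varepsilon^2\tau^3\,\tau_0^{m+\alpha/2}/\tau^3) \to$ contribution $O(\tau_0^{m+\alpha/2})$ after summation — this is the origin of the $\tau_0^{m+\alpha/2}$ term in \eqref{c5}. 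For the low-frequency part, one writes $\mathcal{H}$ explicitly in terms of the cubic nonlinearity $h(\varphi) = -\frac{1}{48}(\varphi+\bar\varphi)^3$ via \eqref{c1}–\eqref{c3}: each Fourier coefficient of $\tau H_{\tau/2} - \int_0^\tau H_s\,ds$ carries an oscillatory factor of the form $\tau\,\mathrm{e}^{i\frac{\tau}{2}\Lambda_{\mathbf{k}_1\mathbf{k}_2\mathbf{k}_3}} - \int_0^\tau \mathrm{e}^{is\Lambda_{\mathbf{k}_1\mathbf{k}_2\mathbf{k}_3}}\,ds$ where $\Lambda$ is a combination of the $\delta_{kl}$'s from the three inputs and the output mode. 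The crucial observation is that when this combined frequency $\Lambda$ is \emph{nonzero}, one performs summation by parts in $n$ over the telescoping geometric sum $\sum_n \mathrm{e}^{i n\tau\langle\nabla\rangle_\alpha}(\cdots)$, trading one power of $\tau$ for the denominator $|\mathrm{e}^{i\tau\Lambda}-1|^{-1}$; the admissibility condition on $\tau$ in the theorem is precisely what guarantees $|\mathrm{e}^{i\tau\Lambda}-1|$ is bounded below uniformly over the low-frequency cube, so no small-divisor blow-up occurs. When $\Lambda = 0$ (the resonant set), the quadrature error $\tau\cdot 1 - \int_0^\tau 1\,ds = 0$ vanishes identically, so resonant modes contribute nothing. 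In both cases one gains an extra $\tau$, i.e. the effective per-step local error after summation behaves like $\varepsilon^2\tau^2\cdot(\text{number of steps})^{-1}\cdot(\text{number of steps})$, yielding the clean $O(\varepsilon^2\tau^2)$.

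With the local errors controlled, I would close the argument by Gronwall/discrete induction: assuming $\|e^{[n]}\|_{\alpha/2}\lesssim \varepsilon^2\tau^2 + \tau_0^{m+\alpha/2}$ holds up to step $n$, the Lipschitz bound on $S_\tau$ in $H^{\alpha/2}$ (which follows since $F$ maps $H^{\alpha/2}\to H^{\alpha/2}$ with Lipschitz constant $O(\varepsilon^2)$ on bounded sets, using $\langle\nabla\rangle_\alpha^{-1}$ gaining $\alpha/2$ derivatives and the cubic structure) gives $\|S_\tau(\varphi(t_n)) - S_\tau(\varphi^{[n]})\|_{\alpha/2} \le (1+C\varepsilon^2\tau)\|e^{[n]}\|_{\alpha/2}$; the factor $\mathrm{e}^{C\varepsilon^2 T_\varepsilon}=\mathrm{e}^{CT}$ stays bounded over the long time interval precisely because the Lipschitz constant is $O(\varepsilon^2)$, not $O(1)$. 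Combining with the summed local-error estimate closes the induction and establishes \eqref{c5} for $\|\varphi(t_n)-\varphi^{[n]}\|_{\alpha/2}$; then \eqref{b8}, \eqref{b31} and the mapping properties of $\langle\nabla\rangle_\alpha^{\pm 1}$ transfer this to $\|u(\cdot,t_n)-u^{[n]}\|_{\alpha/2} + \|\partial_t u(\cdot,t_n) - v^{[n]}\|$. The statement \eqref{c6} is the immediate specialization $m\to\infty$. I expect the main obstacle to be the bookkeeping of the triple Fourier convolution in $h(\varphi)=-\frac{1}{48}(\varphi+\bar\varphi)^3$ together with verifying that the nonresonance lower bound on $|\mathrm{e}^{i\tau\Lambda}-1|$ holds uniformly — this is where the fractional dispersion relation $\delta_{kl}=\sqrt{1+(\mu_k^2+\mu_l^2)^{\alpha/2}}$ genuinely differs from the classical case $\alpha=2$ and where the explicit admissibility threshold for $\tau$ in the theorem statement gets pinned down.
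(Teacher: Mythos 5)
Your outline follows the same overall architecture as the paper's proof: the error recursion for $e^{[n]}$ with the $O(\varepsilon^2\tau)$ Lipschitz bound (\ref{c9}) and Gronwall over $n\le T_\varepsilon/\tau$, a frequency cutoff at $N_0\sim 1/\tau_0$ with the high modes controlled by the $H^{m+\alpha/2}$ regularity (the source of $\tau_0^{m+\alpha/2}$), and the low modes treated by phase cancellation under the step-size restriction. However, the mechanism that actually produces the extra factor $\varepsilon^2$ is missing from your low-frequency step, and your own accounting does not reach the claimed bound. You assert that summation by parts lets you ``gain an extra $\tau$''; an extra power of $\tau$ only improves the naive accumulation $O(\tau^2)$ to $O(\tau^3)$, and $\tau^3\lesssim\varepsilon^2\tau^2$ only when $\tau\lesssim\varepsilon^2$, whereas the theorem permits $\tau$ up to a threshold depending on $\tau_0$ but not on $\varepsilon$. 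In the paper the $\varepsilon^2$ gain comes from the twisted variable $\xi(t)=\mathrm{e}^{-it\langle\nabla\rangle_\alpha}\varphi(t)$ of (\ref{c11}): since $\partial_t\xi=\mathrm{e}^{-it\langle\nabla\rangle_\alpha}F\bigl(\mathrm{e}^{it\langle\nabla\rangle_\alpha}\xi\bigr)=O(\varepsilon^2)$, the amplitudes $c^j_{k,l}=\widehat{\xi}_{k_1l_1}(t_j)\widehat{\xi}_{k_2l_2}(t_j)\widehat{\xi}_{k_3l_3}(t_j)$ that remain after all four linear phases are extracted vary by only $O(\varepsilon^2\tau)$ per step (\ref{c12}), so the Abel-summation differences in (\ref{c29})--(\ref{c31}) total $O(n\varepsilon^2\tau)=O(T)$ and the step count $T/(\varepsilon^2\tau)$ never reappears; this is exactly what closes the estimate (\ref{c35}). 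If one works instead with the un-twisted coefficients of $\varphi(t_j)$, each difference is only $O(\tau)$, the factor $n$ returns, and the gain is destroyed. Your proposal never identifies this slow-variation estimate, which is the heart of the RCO argument.

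A second, smaller inaccuracy: the admissibility condition on $\tau$ does not make $|\mathrm{e}^{i\tau\Lambda}-1|$ bounded below uniformly over the low-frequency cube. The combined frequency $\delta^{'}_{kl}=\delta_{kl}-\delta_{k_1l_1}-\delta_{k_2l_2}-\delta_{k_3l_3}$ can be arbitrarily small but nonzero, in which case $|\mathrm{e}^{i\tau\delta^{'}_{kl}}-1|\approx\tau|\delta^{'}_{kl}|$ is small. What the restriction on $\tau$ really yields is $\tau|\delta^{'}_{kl}|/2\le\gamma\pi$ as in (\ref{c27}), hence $|\sin(\tau\delta^{'}_{kl}/2)|\gtrsim\tau|\delta^{'}_{kl}|$ and $|S^n_{k,l}|\lesssim 1/(\tau|\delta^{'}_{kl}|)$ as in (\ref{c28}); this small denominator must then be balanced against the midpoint-rule factor $r_{k,l}=-\tau\mathrm{e}^{-i\tau\delta^{'}_{kl}/2}+\int_0^\tau\mathrm{e}^{-is\delta^{'}_{kl}}\,ds=O\bigl(\tau^3(\delta^{'}_{kl})^2\bigr)$ of (\ref{c261}), giving $O(\tau^2|\delta^{'}_{kl}|)$ per mode with $|\delta^{'}_{kl}|$ absorbed by the regularity weights (\ref{c32})--(\ref{c34}). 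You note the exact vanishing at resonance, but without the quantitative $O(\tau^3(\delta^{'}_{kl})^2)$ bound the near-resonant modes are not controlled by your argument as written. Once these two ingredients are supplied, your outline coincides with the paper's proof; the remaining parts (the treatment of $\mathcal{R}^n$, the projection term $\tau_0^{m+\alpha/2}$, the induction with $\|\varphi^{[n]}\|_{\alpha/2}\le K+1$, and the transfer to $u,v$ via (\ref{b31}) and (\ref{b8})) are correct and match the paper.
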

\begin{proof}
Let the numerical error function $e^{[n]}:=e^{[n]}(x, y)\left(n=0,1, \ldots, T_{\varepsilon} / \tau\right)$  be
\begin{equation}
e^{[n]}:=\varphi^{[n]}-\varphi\left(t_n\right) .
\label{c6}
\end{equation}
Based on the assumption (A), we  will prove that there exists $\tau_c>0$ and  $0<\tau<\tau_c$ such that the following estimates hold,
\begin{equation}
\left\|e^{[n]}\right\|_{\alpha/2} \leq C\left(\varepsilon^2 \tau^2+\tau_0^{m+\alpha/2}\right), \quad\left\|\varphi^{[n]}\right\|_{\alpha/2} \leq K+1, \quad 0 \leq n \leq {T_\varepsilon}{/\tau},
\label{c7}
\end{equation}
here $K=\left\|\varphi\right\|_{L^{\infty}([0,T_{\varepsilon}]; H^{\alpha/2})}$ is a constant depending on $T$. The mathematical induction is adopted  to prove the estimates. It is obvious to obtain  the error bound  (\ref{c5}) holds for the case $n=0$ with $\varphi^{[0]}=\varphi_0$. Then we suppose (\ref{c7}) hods for all $0 \leq n \leq p \leq {T_\varepsilon}{/\tau}-1$,  and  we will prove (\ref{c7})  holds for the case $n=p+1$.

From (\ref{b7}) and $(\ref{c2})$, we get the following error equation,
\begin{equation}
e^{[n+1]}=S_{\tau}(\varphi^{[n]})-S_{\tau}(\varphi(t_n))+\mathscr{E}^n=\mathrm{e}^{i \tau\langle\nabla\rangle_\alpha} e^{[n]}+Q^n+\mathscr{E}^n, \quad 0 \leq n \leq T_{\varepsilon} / \tau,
\label{c701}
\end{equation}
where
\begin{equation}
Q^{n}=\tau  \mathrm{e}^{i \frac{\tau\langle\nabla\rangle_{\alpha}}{2}}\left(F\left( \mathrm{e}^{i \frac{\tau\langle\nabla\rangle_{\alpha}}{2}} \varphi^{[n]}\right)-F\left( \mathrm{e}^{i \frac{\tau\langle\nabla\rangle_{\alpha}}{2}} \varphi\left(t_n\right)\right)\right).
\label{c702}
\end{equation}
Then
\begin{equation}
e^{[n+1]}=\mathrm{e}^{i(n+1) \tau\langle\nabla\rangle_\alpha} e^{[0]}+\sum_{j=0}^n \mathrm{e}^{i(n-j) \tau\langle\nabla\rangle _\alpha}\left(Q^j+\mathscr{E}^j\right)=\sum_{j=0}^n \mathrm{e}^{i(n-j) \tau\langle\nabla\rangle_\alpha}\left(Q^j+\mathscr{E}^j\right).
\label{c8}
\end{equation}
By the assumption (A) and (\ref{c7}) for $n \leq p$, we have
\begin{equation}
\left\|Q^n\right\|_{\alpha/2} \lesssim \tau\left\|F\left( \mathrm{e}^{i \frac{\tau\langle\nabla\rangle_{\alpha}}{2}} \varphi^{[n]}\right)-F\left( \mathrm{e}^{i \frac{\tau\langle\nabla\rangle_{\alpha}}{2}} \varphi\left(t_n\right)\right)\right\|_{\alpha/2} \lesssim \varepsilon^2 \tau\left\| {e}^{[n]}\right\|_{\alpha/2}, \quad n \leq p.
\label{c9}
\end{equation}
From (\ref{c301}), Eq. (\ref{c8}) can be written as
\begin{equation}
\left\|e^{[n+1]}\right\|_{\alpha/2} \lesssim \varepsilon^2 \tau^2+\varepsilon^2 \tau \sum_{j=0}^n\left\|e^{[j]}\right\|_{\alpha/2}+\left\|\sum_{j=0}^n  \mathrm{e}^{i(n-j) \tau\langle\nabla\rangle_{\alpha}} \mathcal{H}\left(\varphi\left(t_j\right)\right)\right\|_{\alpha/2}.
\label{c10}
\end{equation}

To analysis the last term on the right hand side (RHS) of (\ref{c10}) and gain the order $O\left(\varepsilon^2\right)$, we introduce the regularity compensation oscillation (RCO)  technique \cite{19}, whose key idea is a summation-by-parts procedure combined with spectrum cutoff and phase cancellation.
%which  the high-frequency modes are controlled by regularity and the low-frequency modes are analyzed by phase cancellation as well as energy method.
Multiplying the last term in (\ref{c10}) by $\mathrm{e}^{i(n-1) \tau\langle\nabla\rangle_\alpha}$, then the last term becomes
$$
\left\|\sum_{j=0}^n \mathrm{e}^{-i(j+1) \tau\langle\nabla\rangle_\alpha} \mathcal{H}\left(\varphi\left(t_j\right)\right)\right\|_{\alpha / 2} .
$$
By (\ref{b3}), we get $\partial_t \varphi(x, y, t)-i\langle\nabla\rangle_\alpha \varphi(x, y, t)=F(\varphi(x, y, t))=O\left(\varepsilon^2\right)$,  then we define the `twisted variable' as
\begin{equation}
\xi(x, y, t)=\mathrm{e}^{-i t\langle\nabla\rangle_\alpha} \varphi(x, y, t), \quad t \geq 0.
\label{c11}
\end{equation}
Note that it satisfies the equation $\partial_t \xi(x, y, t)= \mathrm{e}^{-i t\langle\nabla\rangle_\alpha} F\left(\mathrm{e}^{i t\langle\nabla\rangle \alpha} \xi(x, y, t)\right)$. By the assumption (A), we get $\|\xi\|_{L^{\infty}\left(\left[0, T_{\varepsilon}\right] ; H^{m+\alpha / 2}\right)} \lesssim 1$ and $\left\|\partial_t \xi\right\|_{L^{\infty}\left(\left[0, T_{\varepsilon}\right] ; H^{m+\alpha / 2}\right)} \lesssim \varepsilon^2$ with
\begin{equation}
\left\|\xi\left(t_{n+1}\right)-\xi\left(t_n\right)\right\|_{m+\alpha / 2} \lesssim \varepsilon^2 \tau, \quad 0 \leq n \leq T_{\varepsilon} / \tau-1.
\label{c12}
\end{equation}

\textbf{Step 1.} Choose the cut-off parameter for Fourier modes. Let
$\tau_0 \in(0,1)$  and  $N_0=2\left\lceil 1 / \tau_0\right\rceil \in \mathbb{Z}^{+}$ (where $\lceil\cdot\rceil$ is the ceiling function) with $1 / \tau_0 \leq N_0 / 2<1+1 / \tau_0$, then only the Fourier modes with $-\frac{N_0}{2} \leq l \leq \frac{N_0}{2}-1$ $(|l| \leq \frac{1}{\tau_0}$, low frequency modes) in a spectral projection are considered. Recalling $H_t$ from (\ref{c101}) as
\begin{equation}\label{c121}
H_t(\varphi)
=-\frac{\varepsilon^2i}{48}\mathrm{e}^{-i t\langle\nabla\rangle_\alpha}\langle\nabla\rangle_\alpha^{-1}\left(\mathrm{e}^{i t\langle\nabla\rangle_\alpha} \varphi+\mathrm{e}^{-i t\langle\nabla\rangle _\alpha} \bar{\varphi}\right)^3.
\end{equation}
%\begin{equation}
%H_t(\varphi)=H_t\left(\mathrm{e}^{i t\langle\nabla\rangle_\alpha} \xi\right)=\mathrm{e}^{-i t\langle\nabla\rangle_\alpha}H\left(\mathrm{e}^{i t\langle\nabla\rangle_\alpha} \mathrm{e}^{i t\langle\nabla\rangle_\alpha} \xi\right)
%=-\frac{\varepsilon^2i}{48}\mathrm{e}^{-i t\langle\nabla\rangle_\alpha}\langle\nabla\rangle_\alpha^{-1}\left(\mathrm{e}^{i t\langle\nabla\rangle_\alpha} \xi+\mathrm{e}^{-i t\langle\nabla\rangle _\alpha} \bar{\xi}\right)^3.
%\label{c13}
%\end{equation}
With the assumption (A), the properties of operators $\mathrm{e}^{i t\langle\nabla\rangle_\alpha}$ and $\langle\nabla\rangle_\alpha^{-1}$, we obtain the following estimates
\begin{equation}
\left\|H_t\left(\mathrm{e}^{i t_{j}\langle\nabla\rangle_\alpha} \xi\left(t_{j}\right)\right)\right\|_{m+\alpha} \lesssim \varepsilon^2\left\|\xi\left(t_{j}\right)\right\|_{m+\alpha / 2}^3 \lesssim \varepsilon^2, \quad 0 \leq j \leq T_{\varepsilon} / \tau .
\label{c14}
\end{equation}
Since
\begin{equation}
\begin{split}
\mathcal{H}\left(\varphi\left(t_{j}\right)\right)=&\mathcal{H}\left(\mathrm{e}^{i t_{j}\langle\nabla\rangle_\alpha} \xi\left(t_{j}\right)\right) \\
=&\mathcal{H}\left(\mathrm{e}^{i t_{j}\langle\nabla\rangle_\alpha} \xi\left(t_{j}\right)\right)-\mathcal{H}\left(\mathrm{e}^{i t_{j}\langle\nabla\rangle_\alpha} P_{N_0} \xi\left(t_{j}\right)\right) \\
& +\mathcal{H}\left(\mathrm{e}^{i t_{j}\langle\nabla\rangle_\alpha} P_{N_0} \xi\left(t_{j}\right)\right)-P_{N_0} \mathcal{H}\left(\mathrm{e}^{i t_{j}\langle\nabla\rangle_\alpha}\left(P_{N_0} \xi\left(t_{j}\right)\right)\right) \\
& +P_{N_0} \mathcal{H}\left(\mathrm{e}^{i t_{j}\langle\nabla\rangle_\alpha}\left(P_{N_0} \xi\left(t_{j}\right)\right)\right),
\label{c15}
\end{split}
\end{equation}
%\begin{equation}
%\begin{split}
% \mathcal{H}\left(\varphi\left(t_{k}\right)\right)=&\mathcal{H}\left(\mathrm{e}^{i t_{k}\langle\nabla\rangle_\alpha} \xi\left(t_{k}\right)\right) \\
%=&\mathcal{H}\left(\mathrm{e}^{i t_{k}\langle\nabla\rangle_\alpha} \xi\left(t_{k}\right)\right)-P_{N_0} \mathcal{H}\left(\mathrm{e}^{i t_{k}\langle\nabla\rangle_\alpha} \xi\left(t_{k}\right)\right) \\
%& +P_{N_0} \mathcal{H}\left(\mathrm{e}^{i t_{k}\langle\nabla\rangle_\alpha} \xi\left(t_{k}\right)\right)-P_{N_0} \mathcal{H}\left(\mathrm{e}^{i t_{k}\langle\nabla\rangle_\alpha}\left(P_{N_0} \xi\left(t_{k}\right)\right)\right) \\
%& +P_{N_0} \mathcal{H}\left(\mathrm{e}^{i t_{k}\langle\nabla\rangle_\alpha}\left(P_{N_0} \xi\left(t_{k}\right)\right)\right),
%\label{c15}
%\end{split}
%\end{equation}
by (\ref{c3}), (\ref{c14}) and the properties of projection operator\cite{24}, it holds
\begin{equation}
\left\|\mathcal{H}\left(\mathrm{e}^{i t_{j}\langle\nabla\rangle_\alpha} \xi\left(t_{j}\right)\right)-\mathcal{H}\left(\mathrm{e}^{i t_{j}\langle\nabla\rangle_\alpha} P_{N_0} \xi\left(t_{j}\right)\right)\right\|_{\alpha / 2} \lesssim \varepsilon^2\tau\left\| P_{N_0}\xi(t_j)- \xi(t_j)\right\|_{\alpha/2}\lesssim \varepsilon^2\tau\tau_0^{m+\alpha / 2},
\label{c16}
\end{equation}
and
\begin{equation}
\left\|\mathcal{H}\left(\mathrm{e}^{i t_{j}\langle\nabla\rangle_\alpha} P_{N_0} \xi\left(t_{j}\right)\right)-P_{N_0} \mathcal{H}\left(\mathrm{e}^{i t_{j}\langle\nabla\rangle_\alpha}\left(P_{N_0} \xi\left(t_{j}\right)\right)\right)\right\|_{\alpha / 2} \lesssim \varepsilon^2\tau\tau_0^{m+\alpha / 2}.
\label{c16}
\end{equation}
On the basis of the  above estimates, we have for $n\le p$,
\begin{equation}
\left\|e^{[n+1]}\right\|_{\alpha / 2} \lesssim \tau_0^{m+\alpha / 2}+\varepsilon^2\tau^2+\varepsilon^2 \tau \sum_{j=0}^n\left\|e^{[j]}\right\|_{\alpha / 2}+\left\|R^n\right\|_{\alpha / 2},
\label{c17}
\end{equation}
where
\begin{equation}
R^n=\sum_{j=0}^n  \mathrm{e}^{-i(j+1) \tau\langle\nabla\rangle_{\alpha}} P_{N_0} \mathcal{H}\left( \mathrm{e}^{i t_j\langle\nabla\rangle_{\alpha}}\left(P_{N_0} \xi\left(t_j\right)\right)\right).
\label{c18}
\end{equation}

\textbf{Step 2.} Analyze the low Fourier modes term $R^n$. We present the following decomposition for the nonlinear function $H(\cdot)$,
\begin{equation}
H(\varphi)=\sum_{q=1}^4 H^{q}(\varphi), \quad H^{q}(\varphi)=-\frac{\varepsilon^2}{48} i\langle\nabla\rangle^{-1}_{\alpha} h^{q}(\varphi), \quad q=1,2,3,4,
\label{c19}
\end{equation}
with $ h^1(\varphi)=\varphi^3, h^2(\varphi)=3\bar{\varphi} \varphi^2, h^3(\varphi)=3 \bar{\varphi}^2 \varphi,  h^4(\varphi)= \bar{\varphi}^3$. For $s\in \mathbb{R}$ and $q=1,2,3,4$, introducing $ H_t^q(\varphi(t_j))=\mathrm{e}^{-it\langle\nabla\rangle_{\alpha}} H^q\left(\mathrm{e}^{it\langle\nabla\rangle_{\alpha}}\varphi(t_j)\right)$ and $ \mathcal{H}^q\left(\varphi\left(t_n\right)\right)= \mathrm{e}^{i\tau\langle\nabla\rangle_{\alpha}}\left( \tau H^q_{\frac{\tau}{2}}\left( \varphi(t_n)\right)
-\int_0^\tau H^q_{s}\left(\varphi\left(t_n\right)\right)ds\right)$, we have
\begin{equation}
R^n=\sum^{4}_{q=1}R_q^n, \quad R_q^n=\sum_{j=0}^n \mathrm{e}^{-it_{j+1}\langle\nabla\rangle_{\alpha}} P_{N_0} \mathcal{H}^q\left( \mathrm{e}^{i t_j\langle\nabla\rangle_{\alpha}}\left(P_{N_0} \xi\left(t_j\right)\right)\right).
\label{c20}
\end{equation}
Since the analysis of $R_q^n(q=1,2,3,4)$ are analogous, we just give the case of $R_1^n$ $\left(0 \leq n \leq T_{\varepsilon} / \tau-1\right)$.

Give two index sets  $I_k^{N_0}$, $I_l^{N_0}$ as
\begin{align}
I_k^{N_0}&=\left\{\left(k_1, k_2, k_3\right) \mid k_1+k_2+k_3=l, k_1, k_2, k_3 \in T_{N_0}\right\}, \quad k \in T_{N_0},
\\
I_l^{N_0}&=\left\{\left(l_1, l_2, l_3\right) \mid l_1+l_2+l_3=l, l_1, l_2, l_3 \in T_{N_0}\right\}, \quad l \in T_{N_0},
\label{c21}
\end{align}
the following expansion is obtained based on  $P_{N_0} \varphi\left(t_j\right)=\sum_{k,l \in {T}_{N_0}} \widehat{\varphi}_{kl}\left(t_j\right) e^{i \mu_k(x-a)+i \mu_l(y-a)}$ :
\begin{equation}
\begin{split}
&  \mathrm{e}^{-i t_{j+1}\langle\nabla\rangle_{\alpha}} P_{N_0}\left(\mathrm{e}^{i \tau\langle\nabla\rangle_{\alpha}}H^1_{s}\left( \mathrm{e}^{i
t_j\langle\nabla\rangle_{\alpha}}P_{N_0} \xi\left(t_j\right)\right)\right) \\
& =-\varepsilon^2\sum_{k, l \in {T}_{N_0}} \sum_{\left(k_1, k_2, k_3\right) \in {I}_k^{N_0},\atop \left(l_1, l_2, l_3\right) \in {I}_l^{N_0}} \frac{i}{48 \delta_{kl}} \mathcal{G}_{k,l}^{j}(s)  \mathrm{e}^{i \mu_k(x-a)+i \mu_l(y-a)},
\end{split}
\label{c22}
\end{equation}
where the coefficients $\mathcal{G}_{k,l}^j(s)$ are functions of $s$ defined as
\begin{equation}\label{c23}
\mathcal{G}_{k,l}^j(s)= \mathrm{e}^{-i\left(t_j+s\right)\delta_{kl}^{'}}     \widehat{\xi}_{k_1l_1}\left(t_j\right) \widehat{\xi}_{k_2l_2}\left(t_j\right) \widehat{\xi}_{k_3l_3}\left(t_j\right),
\end{equation}
with $\delta_{kl}^{'}=\delta_{kl}-\delta_{k_1l_1}-\delta_{k_2l_2}-\delta_{k_3l_3}$. Thus, we get
\begin{equation}\label{c24}
R_1^n=-\frac{i \varepsilon^2}{48} \sum_{j=0}^n \sum_{k,l \in {T}_{N_0}} \sum_{\left(k_1, k_2, k_3\right) \in {I}_k^{N_0},\atop \left(l_1, l_2, l_3\right) \in {I}_l^{N_0}} \frac{1}{\delta_{kl}} \Lambda_{k,l}^j  \mathrm{e}^{i \mu_k(x-a)+i \mu_l(y-a)},
\end{equation}
and
\begin{equation}\label{c25}
\Lambda_{k,l}^j  =-\tau \mathcal{G}_{k,l}^j(\tau/2)+\int_0^\tau \mathcal{G}_{k,l}^j(s) ds  =r_{k,l}  \mathrm{e}^{-i t_j \delta_{kl}^{'}} c_{k,l}^j,
\end{equation}
here the coefficients $c_{k,l}^k$ and $r_{k,l}$ are difined by
%\begin{equation}
\begin{align}\label{c26}
c_{k,l}^j&=\widehat{\xi}_{k_1l_1}\left(t_j\right) \widehat{\xi}_{k_2l_2}\left(t_j\right) \widehat{\xi}_{k_3l_3}\left(t_j\right),
\\
\label{c261}
r_{k,l}&=-\tau e^{-i\tau\delta_{kl}^{'}/2}+\int_0^{\tau}e^{-is\delta_{kl}^{'}}ds=O\left(\tau^3(\delta_{kl}^{'})^2\right).
\end{align}
%\end{equation}
We suppose $\delta_{kl}^{'} \neq 0$ $(r_{k,l}=0 ~\text{if} ~\delta_{kl}^{'}=0)$. For $k, l \in T_{N_0}$ and $\left(k_1, k_2, k_3\right) \in I_k^{N_0}, \left(l_1, l_2, l_3\right) \in I_l^{N_0}$, the following inequality holds,
$$
\left|\delta_{kl}^{'}\right| \leq 4 \delta_{\frac{N_0}{2}\frac{N_0}{2}}=4 \sqrt{1+\left(\left|\mu_{\frac{N_0}{2}}\right|^2+\left|\mu_{\frac{N_0}{2}}\right|^2\right)^{\alpha/2}}<4 \sqrt{1+\frac{2^{3\alpha/2} \pi^\alpha\left(1+\tau_0\right)^\alpha}{\tau_0^\alpha(b-a)^\alpha}},
$$
which shows if $0<\tau\le\gamma\frac{\pi(b-a)^{\alpha/2}\tau_0^{\alpha/2}}{2\sqrt{\tau_0^{\alpha}(b-a)^{\alpha}+2^{3\alpha/2}\pi^{\alpha}
(1+\tau_0)^{\alpha}}}:=\tau_0^{\gamma}(0<\tau_0,\gamma<1)$ then,
\begin{equation}\label{c27}
\frac{\tau}{2}\left|\delta_{kl}^{'}\right| \leq \gamma \pi.
\end{equation}
Denoting $S_{k,l}^n=\sum_{j=0}^n\mathrm e^{-it_j\delta_{kl}^{'}}(n\ge0)$, for $0<\tau<\tau_0^\gamma$, since $\frac{\sin(t)}{t}$ is bounded and decreasing for $t\in[0, \gamma \pi)$, then we have the following inequality:
\begin{equation}\label{c28}
\left|S_{k,l}^n\right| \leq \frac{1}{\left|\sin \left(\tau \delta_{kl}^{'} / 2\right)\right|} \leq \frac{C}{\tau\left|\delta_{kl}^{'}\right|}, \quad C=\frac{2 \gamma \pi}{\sin (\gamma \pi)}.
\end{equation}
Adopting summation by parts, we derive from (\ref{c25}) that
\begin{equation}\label{c29}
\sum_{j=0}^n \Lambda_{k,l}^j=r_{k,l}\left[\sum_{j=0}^{n-1} S_{k,l}^j\left(c_{k,l}^j-c_{k,l}^{j+1}\right)+S_{k,l}^n c_{k,l}^n\right],
\end{equation}
with
\begin{equation}\label{c30}
\begin{aligned}
c^j_{k,l}-c^{j+1}_{k,l}
=&\left(\widehat{\xi}_{k_1l_1}\left(t_j\right)-\widehat{\xi}_{k_1l_1}\left(t_{j+1}\right)\right)\widehat{\xi}_{k_2l_2}\left(t_j\right)\widehat{\xi}_{k_3l_3}\left(t_j\right)
\\
&+\widehat{\xi}_{k_1l_1}\left(t_{j+1}\right)\left(\widehat{\xi}_{k_2l_2}\left(t_j\right)-\widehat{\xi}_{k_2l_2}\left(t_{j+1}\right)\right)\widehat{\xi}_{l_3l_3}\left(t_j\right) \\
&+\widehat\xi_{k_1l_1}\left(t_{j+1}\right)\widehat\xi_{k_2l_2}\left(t_{j+1}\right)\left(\widehat\xi_{l_3l_3}\left(t_j\right)-\widehat\xi_{l_3l_3}\left(t_{j+1}\right)\right).
\end{aligned}
\end{equation}

According to (\ref{c261}) and (\ref{c28})-(\ref{c30}), we get
\begin{equation}\label{c31}
\begin{split}
\left|\sum_{j=0}^n \Lambda_{k,l}^j\right| \lesssim & \tau^2\left|\delta_{k,l}^{'}\right| \sum_{j=0}^{n-1}\left(\left|\widehat{\xi}_{k_1l_1}\left(t_j\right)-\widehat{\xi}_{k_1l_1}\left(t_{j+1}\right)\right|\left|\widehat{\xi}_{k_2l_2}\left(t_j\right)\right|\left| \widehat{\xi}_{k_3l_3}\left(t_j\right)\right|\right.\\
& +\left|\widehat{\xi}_{k_1l_1}\left(t_{j+1}\right)\right|\left|\widehat{\xi}_{k_2l_2}\left(t_j\right)-\widehat{\xi}_{k_2l_2}\left(t_{j+1}\right)\right|\left|\widehat{\xi}_{k_3l_3}\left(t_j\right)\right| \\
&\left.+\left|\widehat{\xi}_{k_1l_1}\left(t_{j+1}\right)\right|\left|\widehat{\xi}_{k_2l_2}\left(t_{j+1}\right)\right|\left|\widehat{\xi}_{k_3l_3}\left(t_j\right)-\widehat{\xi}_{k_3l_3}\left(t_{j+1}\right)\right|\right) \\& +\tau^2\left|\delta_{k,l}^{'}\right|\left|\widehat{\xi}_{k_1l_1}\left(t_n\right)\right|\left|\widehat{\xi}_{k_2l_2}\left(t_n\right)\right|\left|\widehat{\xi}_{k_3l_3}\left(t_n\right)\right|.
\end{split}
\end{equation}
For $k, l \in T_{N_0},\left(k_1, k_2, k_3\right) \in I_k^{N_0}, \left(l_1, l_2, l_3\right) \in I_l^{N_0}$, we have
\begin{equation}\label{c32}
\big|\delta_{k,l}^{'}\big| \lesssim \sqrt{ 1+\biggl(\biggl|\sum_{j=1}^3\mu_{k_j}\biggr|^2+\biggl|\sum_{j=1}^3\mu_{l_j}\biggr|^2\biggr)^{\alpha/2}}
+\sum_{j=1}^3\sqrt{1+\left(\lvert\mu_{k_j}\rvert^2+\rvert\mu_{l_j}\rvert^2\right)^{\alpha/2}} \lesssim \prod_{j=1}^3\sqrt{1+\left(\lvert\mu_{k_j}\rvert^2+\rvert\mu_{l_j}\rvert^2\right)^{\alpha/2}} ,
\end{equation}
thus it holds
\begin{equation}\label{c33}
\begin{aligned}
& \left\|R_1^n\right\|_{\alpha / 2}^2 \lesssim \varepsilon^4 \sum_{k,l \in T_{N_0}}\left|\sum_{\left(k_1, k_2, k_3\right) \in {I}_k^{N_0},\atop \left(l_1, l_2, l_3\right) \in {I}_l^{N_0}} \sum_{j=0}^n \Lambda_{k,l}^j\right|^2 \\
\lesssim & \varepsilon^4 \tau^4\left\{\sum_{k,l \in T_{N_0}}\left(\sum_{\left(k_1, k_2, k_3\right) \in {I}_k^{N_0},\atop \left(l_1, l_2, l_3\right) \in {I}_l^{N_0}}\left|\widehat{\xi}_{k_1l_1}\left(t_n\right)\right|\left|\widehat{\xi}_{k_2l_2}\left(t_n\right)\right|\left|\widehat{\xi}_{k_3l_3}\left(t_n\right)\right| \prod_{j=1}^3\sqrt{1+\left(\lvert\mu_{k_j}\rvert^2+\rvert\mu_{l_j}\rvert^2\right)^{\alpha/2}} \right)^2\right. \\
& +n \sum_{j=0}^{n-1} \sum_{k,l \in T_{N_0}}\left[\left(\sum_{\left(k_1, k_2, k_3\right) \in {I}_k^{N_0},\atop\left(l_1, l_2, l_3\right) \in I_l^{N_0}}\left|\widehat{\xi}_{k_1l_1}\left(t_j\right)-\widehat{\xi}_{k_1l_1}\left(t_{j+1}\right)\right|\left|\widehat{\xi}_{k_2l_2}\left(t_j\right)\right|\left|\widehat{\xi}_{k_3l_3}\left(t_j\right)\right| \prod_{j=1}^3\sqrt{1+\left(\lvert\mu_{k_j}\rvert^2+\rvert\mu_{l_j}\rvert^2\right)^{\alpha/2}} \right)^2\right. \\
& +\left(\sum_{\left(k_1, k_2, k_3\right) \in {I}_k^{N_0},\atop \left(l_1, l_2, l_3\right) \in {I}_l^{N_0}}\left|\widehat{\xi}_{k_1l_1}\left(t_{j+1}\right)\right|\left|\widehat{\xi}_{k_2l_2}\left(t_j\right)-\widehat{\xi}_{k_2l_2}\left(t_{j+1}\right)\right|\left|\widehat{\xi}_{k_3l_3}\left(t_j\right)\right| \prod_{j=1}^3\sqrt{1+\left(\lvert\mu_{k_j}\rvert^2+\rvert\mu_{l_j}\rvert^2\right)^{\alpha/2}} \right)^2 \\
& \left.\left.+\left(\sum_{\left(k_1, k_2, k_3\right) \in {I}_k^{N_0},\atop \left(l_1, l_2, l_3\right) \in {I}_l^{N_0}}\left|\widehat{\xi}_{k_1l_1}\left(t_{j+1}\right)\right|\left|\widehat{\xi}_{k_2l_2}\left(t_{j+1}\right)\right|\left|\widehat{\xi}_{k_3l_3}\left(t_j\right)-\widehat{\xi}_{k_3l_3}\left(t_{j+1}\right)\right| \prod_{j=1}^3\sqrt{1+\left(\lvert\mu_{k_j}\rvert^2+\rvert\mu_{l_j}\rvert^2\right)^{\alpha/2}} \right)^2\right]\right\} .
\end{aligned}
\end{equation}
Next we give the following auxiliary functions to help us estimate the RHS of (\ref{c33}):
\begin{align}\label{c34}
\theta(x, y)&=\sum_{k,l \in \mathbb{Z}}\sqrt{1+\left(\lvert\mu_{k_j}\rvert^2+\rvert\mu_{l_j}\rvert^2\right)^{\alpha/2}}\left|\widehat{\xi}_{kl}\left(t_n\right)\right| \mathrm{e}^{i \mu_k(x-a)+i \mu_l(y-a)}, \\
\theta_1(x, y)&=\sum_{k,l \in \mathbb{Z}} \sqrt{1+\left(\lvert\mu_{k_j}\rvert^2+\rvert\mu_{l_j}\rvert^2\right)^{\alpha/2}}\left|\widehat{\xi}_{kl}\left(t_j\right)-\widehat{\xi}_{kl}\left(t_{j+1}\right)\right| \mathrm{e}^{i \mu_k(x-a)+i \mu_l(y-a)}, \\
\theta_2(x, y)&=\sum_{k,l \in \mathbb{Z}} \sqrt{1+\left(\lvert\mu_{k_j}\rvert^2+\rvert\mu_{l_j}\rvert^2\right)^{\alpha/2}}\left|\widehat{\xi}_{kl}\left(t_j\right)\right| \mathrm{e}^{i \mu_k(x-a)+i \mu_l(y-a)},\\
\theta_3(x, y)&=\sum_{k,l \in \mathbb{Z}} \sqrt{1+\left(\lvert\mu_{k_j}\rvert^2+\rvert\mu_{l_j}\rvert^2\right)^{\alpha/2}}\left|\widehat{\xi}_{kl}\left(t_{j+1}\right)\right| \mathrm{e}^{i \mu_k(x-a)+i \mu_l(y-a)}.
\end{align}

By the assumption $(\mathrm{A})$, we have $\|\theta\|_s \lesssim\left\|\xi\left(t_n\right)\right\|_{s+\alpha / 2}(s \leq m)$. Then
\begin{equation}\label{c35}
\begin{aligned}
\left\|R_1^n\right\|_{\alpha / 2}^2 \lesssim & \varepsilon^4 \tau^4\left[\left\|\xi\left(t_n\right)\right\|_{m+\alpha / 2}^6+n \sum_{j=0}^{n-1}\left\|\xi\left(t_j\right)-\xi\left(t_{j+1}\right)\right\|_{m+\alpha / 2}^2\right. \\
& \left.\left(\left\|\xi\left(t_j\right)\right\|_{m+\alpha / 2}+\left\|\xi\left(t_{j+1}\right)\right\|_{m+\alpha / 2}\right)^4\right] \\
 \lesssim &\varepsilon^4 \tau^4+n^2 \varepsilon^4 \tau^4\left(\varepsilon^2 \tau\right)^2 \lesssim \varepsilon^4 \tau^4, \quad 0 \leq n \leq p.
\end{aligned}
\end{equation}

Similarly, we  can have the estimates for $R_q^n$ with $q=2,3,4$. Substituting the estimates for $R^n$ into (\ref{c17}), we have
\begin{equation}\label{c36}
\left\|e^{[n+1]}\right\|_{\alpha/2}\lesssim\tau_0^{m+\alpha/2}+\varepsilon^2\tau^2+\varepsilon^2\tau\sum\limits_{j=0}^n\left\|e^{[j]}\right\|_{\alpha/2},\quad0\leq n\leq p.
\end{equation}
Using the Gronwall's inequality \cite{25}, it implies
\begin{equation}\label{c37}
\left\|e^{[n+1]}\right\|_{\alpha/2}\lesssim\tau_0^{m+\alpha/2}+\varepsilon^2\tau^2,\quad0\leq n\leq p.
\end{equation}
Thus the first inequality in (\ref{c7}) holds for $n=p+1$.  Furthermore, we have
\begin{equation}\label{c38}
\left\|\varphi^{[p+1]}\right\|_{\alpha/2}\le \left\|\varphi(t_{p+1})\right\|_{\alpha/2}+\left\|e^{[p+1]}\right\|_{\alpha/2}\le K+1.
\end{equation}
which shows  the second inequality in (\ref{c7}) also holds for $n=p+1$. So far we have completed the induction process. Based on (\ref{b31}) and (\ref{b8}),  the improved error bound (\ref{c5}) is proved.
\end{proof}
\subsection{Improved uniform error bounds for the fully discretization scheme}
%In order to give the improved uniform error bounds up to the time $T_ \varepsilon$ of the the fully discrete scheme (\ref{b9}),  we first have following result for the local truncation error for the fully discrete scheme (\ref{b9}).

\begin{lem}
For $0<\varepsilon \le 1$, the local truncation error of the fully discretization scheme (\ref{b9}) for the relativistic NSFSE (\ref{b3}) can be expressed as
\begin{equation}
{\mathcal{\bar{E}}}^n:=P_NS_{\tau}(P_N\varphi(t_n))-P_N \varphi\left(t_{n+1}\right)=P_N \mathcal{H}\left(P_N \varphi\left(t_n\right)\right)+\mathcal{Y}^n, \quad 0 \leq n \leq T_\varepsilon/\tau-1,
\label{c401}
\end{equation}
%\begin{equation}
%\begin{split}
%{\mathcal{\bar{E}}}^n:&=P_N \left(e^{i\tau\langle\nabla\rangle_\alpha}P_N\varphi(t_n)+\tau e^{i\frac{\tau\langle\nabla\rangle_\alpha}{2}}F\left(e^{i\frac{\tau\langle\nabla\rangle_\alpha}{2}}P_N\varphi(t_n)\right)
%\right)-P_N \varphi\left(t_{n+1}\right)\\&=P_N \mathcal{H}\left(P_N \varphi\left(t_n\right)\right)+\mathcal{Y}^n, \quad 0 \leq n \leq \frac{T / \varepsilon^2}{\tau}-1,
%\end{split}
%\label{c401}
%\end{equation}
then we have the following estimates under the assumption (A),
\begin{equation}\label{c402}
\left\|\mathcal{H}\left(P_N \varphi\left(t_n\right)\right)\right\|_{\alpha/2} \lesssim \varepsilon^2 \tau^3, \quad\left\|\mathcal{Y}^n\right\|_{\alpha/2} \lesssim \varepsilon^4 \tau^3+\varepsilon^2 \tau h^m.
\end{equation}
\end{lem}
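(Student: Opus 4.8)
The plan is to rerun the proof of Lemma~\ref{l1}, this time with the spatial projection $P_N$ inserted everywhere, and to keep track of the extra error it produces. For the first bound in (\ref{c402}) nothing new happens: $P_N$ commutes with $\mathrm{e}^{it\langle\nabla\rangle_\alpha}$ and with $\langle\nabla\rangle_\alpha^{\pm1}$ and is a contraction on every $H^s$, so by (\ref{c3}) the quantity $\mathcal{H}(P_N\varphi(t_n))$ is again the midpoint--quadrature error of $\int_0^\tau H_s(P_N\varphi(t_n))\,ds$, and the estimate at the end of the proof of Lemma~\ref{l1} carries over verbatim with $\varphi(t_n)$ replaced by $P_N\varphi(t_n)$, giving $\|\mathcal{H}(P_N\varphi(t_n))\|_{\alpha/2}\lesssim\varepsilon^2\tau^3\|P_N\varphi(t_n)\|_{\alpha/2+1}\le\varepsilon^2\tau^3\|\varphi(t_n)\|_{m+\alpha/2}\lesssim\varepsilon^2\tau^3$ under assumption~(A).

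For the representation (\ref{c401}) and the bound on $\mathcal{Y}^n$, I would start from Duhamel's formula (\ref{c302}) for $\varphi(t_{n+1})$ and from the scheme (\ref{b7}), apply $P_N$ (which commutes with the free propagator so the leading free parts cancel), and obtain
\[
\bar{\mathcal{E}}^n=\tau P_N\mathrm{e}^{i\tau\langle\nabla\rangle_\alpha/2}F\!\left(\mathrm{e}^{i\tau\langle\nabla\rangle_\alpha/2}P_N\varphi(t_n)\right)-\int_0^\tau\mathrm{e}^{i(\tau-s)\langle\nabla\rangle_\alpha}P_N F(\varphi(t_n+s))\,ds .
\]
Then I would insert the Taylor splitting (\ref{c1}), $F=H+\varepsilon^4 i\langle\nabla\rangle_\alpha^{-1}r$, and run the same chain of substitutions as in (\ref{c4}): (i) peel off the $\varepsilon^4 r$ part; (ii) in the remaining $H$-integral replace $\varphi(t_n+s)$ by $\mathrm{e}^{is\langle\nabla\rangle_\alpha}\varphi(t_n)$ via a second Duhamel iteration; (iii) additionally replace $\varphi(t_n)$ by $P_N\varphi(t_n)$ inside $H$ (and inside the $r$-term). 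After (i)--(iii) the surviving $H$-part collapses, exactly as in Lemma~\ref{l1}, to $P_N\mathcal{H}(P_N\varphi(t_n))$, so (\ref{c401}) holds with $\mathcal{Y}^n$ equal to the sum of the remainders produced in (i)--(iii). The pieces from (i) and (ii) are bounded word for word like $\mathcal{R}_1^n$ and $\mathcal{R}_2^n$ in the proof of Lemma~\ref{l1}, hence $\lesssim\varepsilon^4\tau^3$. The piece from (iii) is the genuinely new ingredient: each swap $H(\mathrm{e}^{is\langle\nabla\rangle_\alpha}\varphi(t_n))\mapsto H(\mathrm{e}^{is\langle\nabla\rangle_\alpha}P_N\varphi(t_n))$ (and its $r$-counterpart) is controlled by the $O(\varepsilon^2)$ Lipschitz bound for $H$ on bounded subsets of $H^{\alpha/2}$ (respectively $O(\varepsilon^4)$ for the $r$-term) together with the standard projection estimate $\|(\mathrm{Id}-P_N)\varphi(t_n)\|_{\alpha/2}\lesssim h^m\|\varphi(t_n)\|_{m+\alpha/2}\lesssim h^m$, and it carries the single factor $\tau$ from the one nonlinear sub-flow $\phi_V^\tau$; so this piece is $\lesssim\varepsilon^2\tau h^m$. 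Adding the three contributions gives $\|\mathcal{Y}^n\|_{\alpha/2}\lesssim\varepsilon^4\tau^3+\varepsilon^2\tau h^m$, as claimed.

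The main difficulty is bookkeeping rather than analysis: one must ensure that every occurrence of $(\mathrm{Id}-P_N)$ is multiplied by \emph{both} the $\varepsilon^2$-smallness of the nonlinearity $f$ \emph{and} exactly one power of $\tau$ (that of the single nonlinear sub-step), so that the spatial error enters precisely as $\varepsilon^2\tau h^m$ and not as the crude $\tau h^m$ or $\varepsilon^2 h^m$; this is what later lets the long-time Gronwall argument close with the improved $\varepsilon^2\tau^2$ rate, and it forces one to keep the projection \emph{inside} the cubic nonlinearity throughout and to use its $\varepsilon^2$-weighted Lipschitz structure. A secondary technical point: all the nonlinear estimates invoked above (the algebra property of $H^{m+\alpha/2}$, the smoothing bound $\|\langle\nabla\rangle_\alpha^{-1}u\|_s\lesssim\|u\|_{s-\alpha/2}$, and $\|P_N\varphi(t_n)\|_{m+\alpha/2}\le\|\varphi(t_n)\|_{m+\alpha/2}\lesssim1$) are used at the same regularity index $m$ as in (\ref{c0}), which in particular asks $m$ to be large enough for $H^{m+\alpha/2}$ to be an algebra in dimension $d$; and if one prefers to phrase the statement directly in terms of the collocation scheme (\ref{b9}) with the interpolation operator $I_N$ instead of $P_N$, the aliasing error between $I_N$ and $P_N$ in evaluating $\tfrac1\varepsilon\sin\!\bigl(\tfrac{\varepsilon(\varphi^{(1)}+\overline{\varphi^{(1)}})}{2}\bigr)$ on the grid is itself $O(\varepsilon^2\tau h^m)$ and is absorbed into $\mathcal{Y}^n$.
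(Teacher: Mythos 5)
Your proposal matches the paper's proof essentially step for step: the same decomposition (peel off the $\varepsilon^4 r$ remainder, reapply Duhamel to replace $\varphi(t_n+s)$ by $\mathrm{e}^{is\langle\nabla\rangle_\alpha}\varphi(t_n)$, then swap $\varphi(t_n)$ for $P_N\varphi(t_n)$ inside $H$), the same quadrature-type bound $\|\mathcal{H}(P_N\varphi(t_n))\|_{\alpha/2}\lesssim\varepsilon^2\tau^3$, and the same treatment of the projection swap via the $\varepsilon^2$-Lipschitz bound for $H$ and $\|(\mathrm{Id}-P_N)\varphi(t_n)\|_{\alpha/2}\lesssim h^m$, yielding the $\varepsilon^2\tau h^m$ contribution in $\mathcal{Y}^n$. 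It is correct and takes essentially the same route as the paper.
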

\begin{proof}
We can write the local truncation error as
\begin{equation}\label{c403}
\begin{split}
\mathcal{\bar{E}}^n=&P_N\left(\tau  \mathrm{e}^{i \frac{\tau\langle\nabla\rangle_{\alpha}}{2}} F\left( \mathrm{e}^{i \frac{\tau\langle\nabla\rangle_{\alpha}}{2}} P_N\varphi(t_n)\right)-\int_0^\tau \mathrm{e}^{i(\tau-s)\langle\nabla\rangle_\alpha} F\left(\varphi\left(t_n+s\right)\right)ds\right)\\
=&P_N\left(\tau  \mathrm{e}^{i \frac{\tau\langle\nabla\rangle_{\alpha}}{2}} H\left( \mathrm{e}^{i \frac{\tau\langle\nabla\rangle_{\alpha}}{2}} P_N\varphi(t_n)\right)-\int_0^\tau \mathrm{e}^{i(\tau-s)\langle\nabla\rangle_\alpha} H\left(\varphi\left(t_n+s\right)\right)ds\right.\\&\left.+\tau  \mathrm{e}^{i \frac{\tau\langle\nabla\rangle_{\alpha}}{2}}\varepsilon^4 i\langle\nabla\rangle_{\alpha}^{-1}r\left( \mathrm{e}^{i \frac{\tau\langle\nabla\rangle_{\alpha}}{2}} P_N\varphi(t_n)\right)-\int_0^\tau \mathrm{e}^{i(\tau-s)\langle\nabla\rangle_\alpha} \varepsilon^4 i
\langle\nabla\rangle_{\alpha}^{-1} r\left(\varphi\left(t_n+s\right)\right)ds\right)\\
=&P_N\left(\tau  \mathrm{e}^{i \frac{\tau\langle\nabla\rangle_{\alpha}}{2}} H\left( \mathrm{e}^{i \frac{\tau\langle\nabla\rangle_{\alpha}}{2}} P_N\varphi(t_n)\right)-\int_0^\tau \mathrm{e}^{i(\tau-s)\langle\nabla\rangle_\alpha} H\left(\varphi\left(t_n+s\right)\right)ds\right)+\mathcal{\bar{R}}^n_1\\
=&P_N\left(\tau  \mathrm{e}^{i \frac{\tau\langle\nabla\rangle_{\alpha}}{2}} H\left(e^{i \frac{\tau\langle\nabla\rangle_{\alpha}}{2}} P_N\varphi(t_n)\right)
-\int_0^\tau \mathrm{e}^{i(\tau-s)\langle\nabla\rangle_\alpha} H\left(\mathrm{e}^{is\langle\nabla\rangle_\alpha} \varphi(t_n)+\int_0^\tau \mathrm{e}^{i(s-\sigma)\langle\nabla\rangle_\alpha} F\left(\varphi\left(t_n+\sigma\right)\right)d\sigma\right)ds\right)+\mathcal{\bar{R}}^n_1\\
=&P_N\left(\tau  \mathrm{e}^{i \frac{\tau\langle\nabla\rangle_{\alpha}}{2}} H\left(e^{i \frac{\tau\langle\nabla\rangle_{\alpha}}{2}} P_N\varphi(t_n)\right)-\int_0^\tau \mathrm{e}^{i(\tau-s)\langle\nabla\rangle_\alpha} H\left( \mathrm{e}^{is\langle\nabla\rangle_{\alpha}}\varphi\left(t_n\right)\right)ds\right)+\mathcal{\bar{R}}^n_1+\mathcal{\bar{R}}^n_2\\
=&P_N\left(\tau  \mathrm{e}^{i \frac{\tau\langle\nabla\rangle_{\alpha}}{2}} H\left( \mathrm{e}^{i \frac{\tau\langle\nabla\rangle_{\alpha}}{2}} P_N\varphi(t_n)\right)-\int_0^\tau \mathrm{e}^{i(\tau-s)\langle\nabla\rangle_\alpha} H\left( \mathrm{e}^{is\langle\nabla\rangle_{\alpha}}P_N\varphi\left(t_n\right)\right)ds\right)\\
&+P_N\left(\int_0^\tau \mathrm{e}^{i(\tau-s)\langle\nabla\rangle_\alpha} H\left( \mathrm{e}^{is\langle\nabla\rangle_{\alpha}}P_N\varphi\left(t_n\right)\right)ds-\int_0^\tau \mathrm{e}^{i(\tau-s)\langle\nabla\rangle_\alpha} H\left( \mathrm{e}^{is\langle\nabla\rangle_{\alpha}}\varphi\left(t_n\right)\right)ds\right)+\mathcal{\bar{R}}^n\\
=&P_N\mathcal{H}\left(P_N\varphi\left(t_n\right)\right)+\mathcal{Y}^n.
\end{split}
\end{equation}
where $\mathcal{H}^n\left(P_N\varphi\left(t_n\right)\right)$ is defined in (\ref{c3}), $\mathcal{Y}^n$ is $$\mathcal{Y}^n=P_N\left(\int_0^\tau \mathrm{e}^{i(\tau-s)\langle\nabla\rangle_\alpha} H\left(e^{is\langle\nabla\rangle_{\alpha}}P_N\varphi\left(t_n\right)\right)ds-\int_0^\tau \mathrm{e}^{i(\tau-s)\langle\nabla\rangle_\alpha} H\left(e^{is\langle\nabla\rangle_{\alpha}}\varphi\left(t_n\right)\right)ds\right)+\mathcal{\bar{R}}^n,$$ and $\mathcal{\bar{R}}^n=\mathcal{\bar{R}}_1^n+\mathcal{R}_2^n.$

Under the assumption (A) with $m\ge 0$, we can get
\begin{equation}
\left\|\mathcal{H}\left(P_N\varphi\left(t_n\right)\right)\right\|_{\alpha/2} \lesssim \tau^3\left\|\partial_s^2\left( \mathrm{e}^{i (\tau-s)\langle\nabla\rangle_\alpha} H\left( \mathrm{e}^{i s\langle\nabla\rangle_\alpha} P_N\varphi\left(t_n\right)\right)\right)\right\|_{\alpha/2} \lesssim \varepsilon^2 \tau^3\left\|\varphi\left(t_n\right)\right\|_{\alpha/2+1},
\end{equation}
and
\begin{equation}
\begin{split}
&\left\|\int_0^\tau \mathrm{e}^{i(\tau-s)\langle\nabla\rangle_\alpha} H\left(\mathrm{e}^{is\langle\nabla\rangle_{\alpha}}P_N\varphi\left(t_n\right)\right)ds-\int_0^\tau \mathrm{e}^{i(\tau-s)\langle\nabla\rangle_\alpha} H\left(\mathrm{e}^{is\langle\nabla\rangle_{\alpha}}\varphi\left(t_n\right)\right)ds\right\|_{\alpha/2}
\\ \lesssim & \varepsilon^2 \tau \left\| P_N\varphi\left(t_n\right)-\varphi\left(t_n\right)\right\|_{\alpha/2} \\ \lesssim &\varepsilon^2 \tau h^m \left\|\varphi\left(t_n\right)\right\|_{\alpha/2+m},
\end{split}
\end{equation}
which means $\left\|\mathcal{Y}^n\right\|_{\alpha/2} \lesssim \varepsilon^4 \tau^3+\varepsilon^2 \tau h^m$.  The proof of the error bounds (\ref{c402}) is  completed.
\end{proof}

Next the improved uniform error bounds up to the time at $T_ \varepsilon$ of the fully discretization scheme (\ref{b9})-(\ref{b10}) are given as follows.
\begin{thrm}
For $0<\varepsilon \le 1$, let $h_0>0$, $0<\tau_0<1$ be small enough and independent of $\varepsilon$. When $0<h \leq h_0$ and $0<\tau< \gamma \frac{\pi(b-a)^{\alpha / 2} \tau_0^{\alpha / 2}}{2 \sqrt{ \tau_0^\alpha(b-a)^\alpha+2^{3\alpha/2} \pi^\alpha\left(1+\tau_0\right)^\alpha}}$, where $\gamma \in(0,1)$ is a fixed constant, we have the following improved uniform error bound under the assumption (A),
\begin{equation}\label{c39}
\left\|u\left(\cdot, t_n\right)-I_N u^n\right\|_{\alpha / 2}+\left\|\partial_t u\left(\cdot, t_n\right)-I_N v^n\right\| \lesssim h^m+\varepsilon^2 \tau^2+\tau_0^{m+\alpha / 2}, \quad 0 \leq n \leq T_{\varepsilon} / \tau .
\end{equation}
Especially, if the exact solution is sufficiently smooth, then the improved uniform error bound for sufficiently small $\tau$ is
\begin{equation}\label{c40}
\left\|u\left(\cdot, t_n\right)-I_N u^n\right\|_{\alpha / 2}+\left\|\partial_t u\left(\cdot, t_n\right)-I_N v^n\right\| \lesssim h^m+\varepsilon^2 \tau^2, \quad 0 \leq n \leq T_{\varepsilon} / \tau.
\end{equation}
\label{t2}
\end{thrm}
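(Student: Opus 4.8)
The plan is to run the induction argument of Theorem \ref{thrm1} at the fully discrete level, now carrying the spatial errors along with the temporal ones. Introduce the error function $e^n:=\varphi^n-P_N\varphi(t_n)\in Y_N$, where $\varphi^n$ is the pseudo-spectral solution produced by (\ref{b9}), and keep the projection error $\|(I-P_N)\varphi(t_n)\|_{\alpha/2}\lesssim h^m$ (which follows from assumption (A)) separate. I would prove by induction on $n$ that
$$
\|e^n\|_{\alpha/2}\le C\big(h^m+\varepsilon^2\tau^2+\tau_0^{m+\alpha/2}\big),\qquad \|\varphi^n\|_{\alpha/2}\le K+1,\qquad 0\le n\le T_\varepsilon/\tau,
$$
with $K=\|\varphi\|_{L^\infty([0,T_\varepsilon];H^{\alpha/2})}$; the case $n=0$ is immediate from $\varphi^0\approx P_N\varphi_0$.

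For the induction step I would split one time step into three contributions,
$$
e^{n+1}=\underbrace{\big(\mathcal{S}_\tau(\varphi^n)-P_NS_\tau(\varphi^n)\big)}_{\mathcal{A}^n}+\underbrace{\big(P_NS_\tau(\varphi^n)-P_NS_\tau(P_N\varphi(t_n))\big)}_{Q^n}+\underbrace{\big(P_NS_\tau(P_N\varphi(t_n))-P_N\varphi(t_{n+1})\big)}_{\mathcal{\bar{E}}^n},
$$
where $\mathcal{S}_\tau$ denotes the flow of (\ref{b9}) and $S_\tau$ the continuous-in-space Strang flow (\ref{b7}). The term $\mathcal{\bar{E}}^n$ is the local truncation error of the previous lemma: $\mathcal{\bar{E}}^n=P_N\mathcal{H}(P_N\varphi(t_n))+\mathcal{Y}^n$ with $\|\mathcal{Y}^n\|_{\alpha/2}\lesssim\varepsilon^4\tau^3+\varepsilon^2\tau h^m$ by (\ref{c401})--(\ref{c402}). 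The term $Q^n$ is the stability term: since $\varphi^n\in Y_N$, $Q^n=\mathrm{e}^{i\tau\langle\nabla\rangle_\alpha}e^n+\tau\mathrm{e}^{i\tau\langle\nabla\rangle_\alpha/2}P_N\big(F(\mathrm{e}^{i\tau\langle\nabla\rangle_\alpha/2}\varphi^n)-F(\mathrm{e}^{i\tau\langle\nabla\rangle_\alpha/2}P_N\varphi(t_n))\big)$, and because the nonlinear part of $F$ is $O(\varepsilon^2)$ the Lipschitz estimate with the induction hypothesis gives $\|Q^n-\mathrm{e}^{i\tau\langle\nabla\rangle_\alpha}e^n\|_{\alpha/2}\lesssim\varepsilon^2\tau\|e^n\|_{\alpha/2}$. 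Finally $\mathcal{A}^n$ reduces to $\tau i\,\mathrm{e}^{i\tau\langle\nabla\rangle_\alpha/2}\langle\nabla\rangle_\alpha^{-1}(I_N-P_N)f\big(\tfrac{\varphi^{(1)}+\overline{\varphi^{(1)}}}{2}\big)$ with $\varphi^{(1)}=\mathrm{e}^{i\tau\langle\nabla\rangle_\alpha/2}\varphi^n$; splitting the argument of $f$ at the exact solution, the aliasing at the exact value is $\lesssim\varepsilon^2\tau h^m\|\varphi(t_n)\|_{m+\alpha/2}$ by the standard aliasing estimate and the $O(\varepsilon^2)$ size of $f$, while the remaining difference is Lipschitz and absorbed into the $\varepsilon^2\tau\|e^n\|_{\alpha/2}$ term. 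This uses only regularity of the \emph{exact} solution and never a bound on $\|\varphi^n\|_{H^{m+\alpha/2}}$, which is the simplification announced in the third contribution.

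Unfolding the recursion gives $e^{n+1}=\sum_{j=0}^n\mathrm{e}^{i(n-j)\tau\langle\nabla\rangle_\alpha}\big(P_N\mathcal{H}(P_N\varphi(t_j))+\mathcal{Y}^j+(Q^j-\mathrm{e}^{i\tau\langle\nabla\rangle_\alpha}e^j)+\mathcal{A}^j\big)$; summing the benign terms over the $O(1/(\varepsilon^2\tau))$ steps contributes only $O(h^m+\varepsilon^2\tau^2)$ plus the Gronwall term $\varepsilon^2\tau\sum_j\|e^j\|_{\alpha/2}$, so the crux is to bound $\big\|\sum_{j=0}^n\mathrm{e}^{i(n-j)\tau\langle\nabla\rangle_\alpha}P_N\mathcal{H}(P_N\varphi(t_j))\big\|_{\alpha/2}$ by $O(\varepsilon^2\tau^2)$ uniformly for $n\le T_\varepsilon/\tau$. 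Here I would repeat the regularity compensation oscillation argument of Steps 1--2 of the proof of Theorem \ref{thrm1}: write $P_N\varphi(t_j)=\mathrm{e}^{it_j\langle\nabla\rangle_\alpha}P_N\xi(t_j)$ with the twisted exact variable $\xi(t)=\mathrm{e}^{-it\langle\nabla\rangle_\alpha}\varphi(t)$ (so $\|\xi\|_{L^\infty([0,T_\varepsilon];H^{m+\alpha/2})}\lesssim1$, $\|\xi(t_{j+1})-\xi(t_j)\|_{m+\alpha/2}\lesssim\varepsilon^2\tau$), cut off to the $N_0=2\lceil1/\tau_0\rceil$ low modes with $h_0$ small enough that $N_0\le N$ (contributing $\tau_0^{m+\alpha/2}$ to the final bound), decompose the cubic $\mathcal{H}$ into the four monomials $h^q$ as in (\ref{c19}), and on the low modes perform summation by parts combined with the phase-cancellation bound $|S_{k,l}^n|\le C/(\tau|\delta_{kl}'|)$, valid precisely because $0<\tau<\tau_0^\gamma$ forces $\tfrac\tau2|\delta_{kl}'|\le\gamma\pi$. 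The auxiliary functions $\theta,\theta_1,\theta_2,\theta_3$ built from $\xi$ then bound each squared partial sum by $\lesssim\varepsilon^4\tau^4$, hence the whole term by $O(\varepsilon^2\tau^2)$. Collecting everything yields $\|e^{n+1}\|_{\alpha/2}\lesssim h^m+\varepsilon^2\tau^2+\tau_0^{m+\alpha/2}+\varepsilon^2\tau\sum_{j=0}^n\|e^j\|_{\alpha/2}$, and discrete Gronwall over $n\tau\le T/\varepsilon^2$ closes the first induction bound with an $\varepsilon$-independent constant; then $\|\varphi^{n+1}\|_{\alpha/2}\le K+1$ for $h,\tau,\tau_0$ small.

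Since $\varphi^n\in Y_N$, the reconstruction (\ref{b31})/(\ref{b10}) gives $I_Nu^n=\tfrac12(\varphi^n+\overline{\varphi^n})$ and $I_Nv^n=\tfrac{i}{2}\langle\nabla\rangle_\alpha(\varphi^n-\overline{\varphi^n})$, so $\|u(\cdot,t_n)-I_Nu^n\|_{\alpha/2}\le\|\varphi(t_n)-\varphi^n\|_{\alpha/2}\le\|e^n\|_{\alpha/2}+\|(I-P_N)\varphi(t_n)\|_{\alpha/2}$, and using $\|\langle\nabla\rangle_\alpha w\|\lesssim\|w\|_{\alpha/2}$ also $\|\partial_tu(\cdot,t_n)-I_Nv^n\|\lesssim\|\varphi(t_n)-\varphi^n\|_{\alpha/2}$; together with $\|(I-P_N)\varphi(t_n)\|_{\alpha/2}\lesssim h^m$ this gives (\ref{c39}), and (\ref{c40}) follows since $\tau_0^{m+\alpha/2}$ decays exponentially when $u,\partial_tu\in H^\infty$. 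I expect the RCO estimate of the low-frequency sum to be the main obstacle — it is where the genuine $O(\varepsilon^2)$ gain over a naive $O(\tau^2)$ accumulation of local errors is produced — closely followed by making the aliasing estimate for $\mathcal{A}^n$ work with exact-solution regularity only; the remaining steps are a bookkeeping adaptation of Theorem \ref{thrm1}.
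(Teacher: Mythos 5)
Your proposal is correct and follows essentially the same route as the paper: the induction is run directly at the fully discrete level on $e^n=I_N\varphi^n-P_N\varphi(t_n)\in Y_N$, the one-step error is split into the local truncation error of the fully discrete Lemma, an $O(\varepsilon^2\tau)$-Lipschitz stability term plus an $O(\varepsilon^2\tau h^m)$ aliasing/projection term (your $\mathcal{A}^n+Q^n$ is just a regrouping of the paper's $Z^n$ in (\ref{c46})), and the accumulated $P_N\mathcal{H}$-sum is handled by the same RCO argument (twisted variable, $N_0$-cutoff, four-monomial decomposition, summation by parts with the phase bound under $\tfrac{\tau}{2}|\delta_{kl}'|\le\gamma\pi$) before Gronwall and the reconstruction (\ref{b31})/(\ref{b10}). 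No substantive deviation or gap relative to the paper's proof of Theorem \ref{t2}.
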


\begin{proof}
Considering that $\varphi\left(t_n\right)-I_N \varphi^n=\varphi\left(t_n\right)-P_N \varphi\left(t_n\right)+P_N \varphi\left(t_n\right)-I_N \varphi^n$, we derive
\begin{equation}\label{c41}
\left\|\varphi\left( t_n\right)-I_N \varphi^n\right\|_{\alpha / 2} \lesssim \left\|P_N \varphi(t_n)-I_N \varphi^n\right\|_{\alpha / 2}+h^m.
\end{equation}
Define the error function $e^n:=e^n(x, y) \in Y_N$ as
\begin{equation}\label{c42}
e^n:=I_N \varphi^n-P_N  \varphi(t_n), \quad 0 \leq n \leq T_{\varepsilon} / \tau,
\end{equation}
we have
\begin{equation}\label{c43}
\begin{split}
e^{n+1}=&I_N \varphi^{n+1}-P_NS_{\tau}(P_N\varphi(t_n))+\mathcal{\bar{{E}}}^n\\
=&\mathrm{e}^{i \tau\langle\nabla\rangle_{\alpha}} e^n+Z^n+{\mathcal{\bar{E}}}^n,
\end{split}
\end{equation}
where
\begin{equation}\label{c44}
Z^n=\tau \mathrm{e}^{i \frac{\tau\langle\nabla\rangle_{\alpha}}{2}}\left(I_N F\left(\mathrm{e}^{i \frac{\tau\langle\nabla\rangle_{\alpha}}{2}}I_N \varphi^n\right)-P_N F\left(\mathrm{e}^{i \frac{\tau\langle\nabla\rangle_{\alpha}}{2}}P_N \varphi\left(t_n\right)\right)\right).
\end{equation}
Then we get
\begin{equation}\label{c441}
  e^{n+1}=\mathrm{e}^{i (n+1)\tau\langle\nabla\rangle_{\alpha}} e^0+\sum_{j=0}^{n}\mathrm{e}^{i(n-j) \tau\langle\nabla\rangle_{\alpha}}\left( Z^j+{\mathcal{\bar{E}}}^j\right).
\end{equation}

Similar to the proof for the semi-discretization, we also adopt the mathematical induction to prove that there exist $h_e>0$, $\tau_e>0$ such that for $0<h<h_e$ and $0<\tau<\tau_e$, we have the following estimates
\begin{equation}\label{c45}
\left\|e^n\right\|_{\alpha/2}  \lesssim h^m+\varepsilon^2 \tau^2+\tau_0^{m+1},\quad \left\|I_N \varphi^n\right\|_{\alpha/2} \leq K+1, \quad 0 \leq n \leq {T_ \varepsilon/ \tau}.
\end{equation}
%where $M>0$ is a constant depending on $T$.
For the case  $n=0$, (\ref{c45}) is ture for sufficiently small $0<h<h_1$ with $h_1>0$ based on the standard Fourier interpolation result, i.e., $\left\|e^0\right\|_{\alpha/2} \lesssim h^m$ and $\left\|I_N \varphi^0\right\|_{\alpha/2} \leq $ $K+1$. Next we suppose (\ref{c45}) is ture for $0<n \leq p \leq {T_\varepsilon}/{\tau}$, and we  prove it is ture for the case $n=p+1$.

By the definition of $Z^n$, we have

\begin{equation}\label{c46}
\begin{split}
Z^n=&\tau \mathrm{e}^{i \tau\langle\nabla\rangle_{\alpha}}\left(I_N F\left(\mathrm{e}^{i \frac{\tau\langle\nabla\rangle_{\alpha}}{2}}I_N \varphi^n\right)-I_N F\left(\mathrm{e}^{i \frac{\tau\langle\nabla\rangle_{\alpha}}{2}}P_N \varphi(t_n)\right)\right.
\\&\left.+I_N F\left(\mathrm{e}^{i \frac{\tau\langle\nabla\rangle_{\alpha}}{2}}P_N \varphi(t_n)\right)-P_N F\left(\mathrm{e}^{i \frac{\tau\langle\nabla\rangle_{\alpha}}{2}}P_N \varphi\left(t_n\right)\right)\right),
\end{split}
\end{equation}
then  we have for $0<h<h_2$ and $0<\tau<\tau_2$,
\begin{equation}\label{c47}
\left\|Z^n\right\|_{\alpha/2} \lesssim \varepsilon^2 \tau\left(h^m+\left\|e^n\right\|_{\alpha/2}\right), \quad 0 \leq n \leq p.
\end{equation}
According to the estimates (\ref{c441}) and (\ref{c47}), it holds
\begin{equation}\label{c48}
\left\|e^{n+1}\right\|_{\alpha/2} \lesssim h^m+\varepsilon^2 \tau^2+\varepsilon^2 \tau \sum_{j=0}^n\left\|e^j\right\|_{\alpha/2}+\left\|\sum_{j=0}^n \mathrm{e}^{i(n-j) \tau\langle\nabla\rangle} P_N\mathcal{H}\left( P_N\varphi\left(t_j\right)\right)\right\|_{\alpha/2}, \quad 0 \leq n \leq p.
\end{equation}
Similar to the proof in Theorem \ref{thrm1} and based on the assumption (A), we replace  $P_N$ by $P_{N_0}$ in (\ref{c47}) and obtain
\begin{equation}\label{c49}
\left\|e^{n+1}\right\|_{\alpha/2} \lesssim h^m+\tau_0^{m+1}+\varepsilon^2 \tau^2+\varepsilon^2 \tau \sum_{j=0}^n\left\|e^j\right\|_{\alpha/2}+\left\|R^n\right\|_{\alpha/2},
\end{equation}
where $R^n$ is defined in (\ref{c18}). Due to the estimate (\ref{c35}) for ${R}^n$, we have
\begin{equation}\label{c50}
\left\|e^{n+1}\right\|_{\alpha/2} \lesssim h^m+\tau_0^{m+1}+\varepsilon^2 \tau^2+\varepsilon^2 \tau \sum_{j=0}^n\left\|e^j\right\|_{\alpha/2}, \quad 0 \leq n \leq p.
\end{equation}
Then the following estimate can be obtained by the Gronwall inequality:
\begin{equation}\label{c501}
\left\|e^{n+1}\right\|_{\alpha/2} \lesssim h^m+\tau_0^{m+1}+\varepsilon^2 \tau^2, \quad 0 \leq n \leq p,
\end{equation}
which implies the first inequality in (\ref{c45}) is ture for $n=p+1$. Further, we have
\begin{equation}\label{c502}
\left\|I_N\varphi^{p+1}\right\|_{\alpha/2} \leq\left\|\varphi\left(t_{p+1}\right)\right\|_{\alpha/2}+\left\|e^{p+1}\right\|_{\alpha/2} \leq K+1,
\end{equation}
thus the second inequality in (\ref{c45}) is ture for $n=p+1$. The proof of (\ref{c45}) is finished and the improved uniform error bound (\ref{c39}) can be obtained by (\ref{b31}) as well as (\ref{b10}).
\end{proof}
\begin{remark}
In\cite{48, 49}, the improved uniform error bounds for the fully discretization are mainly based on the error bounds of the semi-discretization and the bound of $\lVert\varphi^{[n]}\rVert_{H^{m+\alpha/2}}$ is necessary to obtain the space convergence order $h^m$. However, the bound of $\lVert\varphi^{[n]}\rVert_{H^{m+\alpha/2}}$ is not available for the nonlinear equation. Here, we prove the error bounds for the fully discretization directly by the mathematical induction, in which the bound of $\lVert\varphi^{[n]}\rVert_{H^{m+\alpha/2}}$ is not needed.
\end{remark}
\begin{remark}
%For $u, v \in H_{\text {per }}^{s+r}(\Omega), s, r \geq 0$ , we have $$ \left((-\Delta)^{s+r} u, v\right)=\left((-\Delta)^s u,(-\Delta)^r v\right).$$
The  NSFSGE (\ref{b1}) conserves the energy as
\begin{equation}\label{c51}
\begin{aligned}
E(t) & :=\int_{\Omega}\left[\left|\partial_t u(x, y, t)\right|^2+\left|(-\Delta)^{{\alpha}/{4}} u(x, y, t)\right|^2+\frac{2}{\varepsilon^2}(1-\cos (\varepsilon u(x, y, t))\right] d {\Omega} \\
& \equiv \int_{\Omega}\left[\left|u_1(x, y)\right|^2+\left|(-\Delta)^{{\alpha}/{4}} u_0(x, y)\right|^2+\frac{2}{\varepsilon^2}\left(1-\cos \left(\varepsilon u_0(x, y)\right)\right)\right] d{\Omega} \\
& =E(0), \quad t \geq 0.
\end{aligned}
\end{equation}
Similar to the energy conservation of the classical nonlinear sine-Grodon equations, the above equation can be proved with the aid of the relation \cite{26}
 that $ \left((-\Delta)^{s+r} u, v\right)=\left((-\Delta)^s u,(-\Delta)^r v\right)$ for $u, v \in H_{\text {per }}^{s+r}(\Omega), s, r \geq 0.$

We intorduce the  discrete energy at $t=t_n$ with the space mesh size $h$ as

\begin{equation}\label{c52}
E_h^n=h^2 \sum_{p=0}^{N-1} \sum_{q=0}^{N-1}\left[\left|v_{pq}^n\right|^2+\left|\left((-\Delta)^{\alpha / 4} \varphi\right)_{pq}^n\right|^2+\frac{2}{\varepsilon^2}\left(1-\cos \left(\varepsilon \varphi_{pq}^n\right)\right)\right],
\end{equation}
then the following estimate of the discrete energy can be obtained:
\begin{equation}\label{c53}
\left|E_h^n-E_h^0\right| \lesssim h^m+\varepsilon^2 \tau^2+\tau_0^{m+\alpha / 2}, \quad 0 \leq n \leq T_{\varepsilon} / \tau.
\end{equation}
If the exact solution is sufficiently smooth, the estimate of the discrete energy for sufficiently small $\tau$ is
\begin{equation}\label{c54}
\left|E_h^n-E_h^0\right| \lesssim h^m+\varepsilon^2 \tau^2, \quad 0 \leq n \leq T_{\varepsilon} / \tau.
\end{equation}
\end{remark}

\section{Extensions}
\label{sec:4}
In this section, we extend the TSFP method and improved  uniform  error bounds to the complex NSFSGE and the oscillatory complex NSFSGE. %which propagates waves with wavelength at $O\left(\varepsilon^{2 p}\right)$ in time.

\subsection{The complex NSFSGE}
Consider the following complex NSFSGE:
\begin{equation}
\left\{\begin{array}{l}
\partial_{t t} u(x,y, t)+(-\Delta)^{{\alpha}/{2}} u(x,y, t)+\frac{1}{\varepsilon} \sin (\varepsilon u(x,y, t))=0, \quad (x, y) \in \Omega, \quad t>0, \\
%u(a,t)=u(b,t), \partial_{x}u(a,t)=\partial_{x}u(b,t),\\
u(x, y, 0)=u_0(x, y), \quad \partial_t u(x, y, 0)=u_1(x, y), \quad (x,y) \in \Omega,
\end{array}\right.
\label{d1}
\end{equation}
with periodic boundary equations. $u:=u(x, y, t)$ is a complex-valued function, $u_0(x,y)$ and $u_1(x,y)$ are two known complex valued functions independent of $\varepsilon$. Introducing $v(x, y, t)=\partial_t u(x, y, t)$ and
\begin{equation}\label{d2}
\varphi_{ \pm}(x, y, t)=u(x, y, t) \mp i\langle\nabla\rangle_\alpha^{-1} v(x, y, t), \quad (x, y) \in \Omega, \quad t \geq 0,
\end{equation}
 then Eq. (\ref{d1}) can be changed into the coupled relativistic NSFSEs as follows:

\begin{equation}\label{d3}
\left\{\begin{array}{l}
i \partial_t \varphi_{ \pm}(x,y,t) \pm\langle\nabla\rangle_\alpha \varphi_{ \pm} (x,y,t)\pm\langle\nabla\rangle_\alpha^{-1} f\left(\frac{\varphi_{+}+\varphi_{-}}{2}\right)=0, \\
\varphi_{ \pm}(x,y,0)=u_0(x,y) \mp i\langle\nabla\rangle_\alpha^{-1} u_1(x,y).
\end{array}\right.
\end{equation}
%where $f(\varphi)=\frac{1}{\varepsilon} \sin (\varepsilon \varphi)-\varphi$.
If the exact solution $u:=u(x, y, t)$ of the  complex NSFSGE (\ref{d1}) up to the time $T_{\varepsilon}=T / \varepsilon^{2}$ exits and  satisfies the assumption (A),  we give the following improved uniform error bounds.
\begin{thrm}
For $0<\varepsilon \leq 1$, let $h_0>0$, $0<\tau_0<1$  be small enough and independent of $\varepsilon$. When $0<h \leq h_0$ and $0<\tau< \gamma \tau_0$, where $\gamma >0 $ is a fixed constant,  we have the following improved uniform error estimate under the assumption (A),
\begin{equation}\label{d4}
\left\|u\left(\cdot, t_n\right)-I_N u^n\right\|_{\alpha / 2}+\left\|\partial_t u\left(\cdot, t_n\right)-I_N v^n\right\| \lesssim h^m+\varepsilon^2 \tau^2+\tau_0^{m+\alpha / 2}, \quad 0 \leq n \leq T_{\varepsilon} / \tau .
\end{equation}
Especially, if the exact solution is sufficiently smooth, then the improved uniform error bound for sufficiently small $\tau$ is
\begin{equation}\label{d5}
\left\|u\left(\cdot, t_n\right)-I_N u^n\right\|_{\alpha / 2}+\left\|\partial_t u\left(\cdot, t_n\right)-I_N v^n\right\| \lesssim h^m+\varepsilon^2 \tau^2, \quad 0 \leq n \leq T_{\varepsilon} / \tau.
\end{equation}
\end{thrm}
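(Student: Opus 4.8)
The plan is to follow essentially the same route used for the real NSFSGE in Theorem~\ref{t2}, but now tracking the coupled pair $(\varphi_+,\varphi_-)$ instead of the single Schr\"odinger-type variable $\varphi$. First I would set up the Strang-splitting TSFP scheme for the coupled system \eqref{d3}: the linear flow acts diagonally as $\mathrm{e}^{\pm it\langle\nabla\rangle_\alpha}$ on the two components, while the nonlinear flow couples them through $f\!\left(\tfrac{\varphi_++\varphi_-}{2}\right)$. Since $f(u)=\frac1\varepsilon\sin(\varepsilon u)-u$ again produces an $O(\varepsilon^2)$ leading term via the Taylor expansion \eqref{c1}, the nonlinearity is still weak, and the local truncation error splits exactly as in Lemmas~\ref{l1} and the fully-discrete lemma: a dominant part of the form $P_N\mathcal H(P_N\varphi_\pm(t_n))$ bounded by $\varepsilon^2\tau^3$, plus a remainder bounded by $\varepsilon^4\tau^3+\varepsilon^2\tau h^m$. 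The only bookkeeping difference is that $\mathcal H$ now acts on the pair, but each scalar piece is a cubic monomial in $\varphi_+,\varphi_-$ (and their conjugates if $u$ is complex-valued, one must be slightly careful: here $u$ is complex, so $\varphi_\pm$ are genuinely independent rather than conjugate, but the cubic structure and the $\langle\nabla\rangle_\alpha^{-1}$ smoothing are unchanged).

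Next I would introduce the twisted variables $\xi_\pm(t)=\mathrm{e}^{\mp it\langle\nabla\rangle_\alpha}\varphi_\pm(t)$, which by the assumption (A) satisfy $\|\xi_\pm\|_{L^\infty([0,T_\varepsilon];H^{m+\alpha/2})}\lesssim1$ and $\|\partial_t\xi_\pm\|_{L^\infty([0,T_\varepsilon];H^{m+\alpha/2})}\lesssim\varepsilon^2$, hence $\|\xi_\pm(t_{n+1})-\xi_\pm(t_n)\|_{m+\alpha/2}\lesssim\varepsilon^2\tau$. Then I would run the regularity compensation oscillation argument: split $\mathcal H(\mathrm{e}^{\pm it_j\langle\nabla\rangle_\alpha}\xi_\pm(t_j))$ into a high-frequency part controlled by $P_{N_0}$-projection errors ($\lesssim\varepsilon^2\tau\tau_0^{m+\alpha/2}$) and a low-frequency part, decompose the low-frequency cubic term $H=\sum_{q=1}^4 H^q$ into its monomials, and for each monomial carry out the summation-by-parts in \eqref{c29}. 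The small-divisor quantity is now $\delta'_{kl}=\pm\delta_{kl}\mp(\pm\delta_{k_1l_1}\pm\delta_{k_2l_2}\pm\delta_{k_3l_3})$ with signs inherited from which components $\varphi_\pm$ enter the monomial; in every case $|\delta'_{kl}|\lesssim\delta_{N_0/2,N_0/2}$, so choosing $\tau<\gamma\tau_0$ (with the implicit constant absorbing the $\alpha$-dependent geometric factor that appeared explicitly in Theorem~\ref{t2}) guarantees $\tfrac\tau2|\delta'_{kl}|\le\gamma\pi$ and the geometric-sum bound $|S^n_{k,l}|\le C/(\tau|\delta'_{kl}|)$. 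Combined with $r_{k,l}=O(\tau^3(\delta'_{kl})^2)$ and the factorization $|\delta'_{kl}|\lesssim\prod_{j=1}^3\sqrt{1+(|\mu_{k_j}|^2+|\mu_{l_j}|^2)^{\alpha/2}}$, the auxiliary functions $\theta,\theta_1,\theta_2,\theta_3$ of \eqref{c34} give $\|R^n\|_{\alpha/2}\lesssim\varepsilon^2\tau^2$ exactly as before.

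Finally I would close the estimate by mathematical induction on $n$, paralleling \eqref{c45}--\eqref{c502}: assuming $\|e_\pm^n\|_{\alpha/2}\lesssim h^m+\varepsilon^2\tau^2+\tau_0^{m+\alpha/2}$ and $\|I_N\varphi_\pm^n\|_{\alpha/2}\le K+1$ for $n\le p$, the error recursion $e_\pm^{n+1}=\mathrm{e}^{\pm i\tau\langle\nabla\rangle_\alpha}e_\pm^n+Z_\pm^n+\bar{\mathcal E}_\pm^n$ with $\|Z_\pm^n\|_{\alpha/2}\lesssim\varepsilon^2\tau(h^m+\|e_+^n\|_{\alpha/2}+\|e_-^n\|_{\alpha/2})$ (the coupling makes $Z_\pm^n$ depend on both errors, which is harmless), together with the RCO bound on $\sum_j\mathrm{e}^{\pm i(n-j)\tau\langle\nabla\rangle_\alpha}P_{N_0}\mathcal H(\cdots)$, yields $\|e_\pm^{n+1}\|_{\alpha/2}\lesssim h^m+\tau_0^{m+\alpha/2}+\varepsilon^2\tau^2+\varepsilon^2\tau\sum_{j=0}^n(\|e_+^j\|_{\alpha/2}+\|e_-^j\|_{\alpha/2})$, and discrete Gronwall over the long interval $n\tau\le T/\varepsilon^2$ (so the factor $\varepsilon^2\tau\cdot n\lesssim T$ stays bounded) closes the induction; translating back through \eqref{d2} and the interpolation estimate \eqref{c41} gives \eqref{d4}, and the smooth case \eqref{d5} follows since $\tau_0^{m+\alpha/2}$ decays faster than any power. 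I expect the main obstacle to be purely organizational: verifying that for \emph{every} sign pattern of the coupled monomials the non-resonance condition still holds with a single uniform threshold $\tau<\gamma\tau_0$, and handling that $\varphi_+$ and $\varphi_-$ are now independent (so one cannot reduce four monomials to one by conjugation as in the real case, and all of $h^1,\dots,h^4$ — together with their mixed $\varphi_+\varphi_-$ analogues — must be estimated separately), but each such estimate is structurally identical to the one already carried out for $R_1^n$.
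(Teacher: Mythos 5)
Your proposal is correct and follows exactly the route the paper intends: the paper states this theorem without a separate proof, as a direct extension of the RCO-plus-induction argument of Theorems \ref{thrm1} and \ref{t2} to the coupled system \eqref{d3}, and your sketch carries out precisely that extension (twisted variables $\xi_\pm$, spectral cutoff, summation by parts with the sign-dependent small divisors $\delta'_{kl}$, and the induction/Gronwall closure over $n\tau\le T/\varepsilon^2$). One minor bookkeeping simplification: since $\sin(\varepsilon u)$ is analytic in the complex-valued $u=\tfrac{\varphi_++\varphi_-}{2}$, no conjugate terms arise at all — the cubic monomials are just $\varphi_+^a\varphi_-^{3-a}$, $a=0,\dots,3$ — which only makes the case analysis you describe slightly shorter.
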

\subsection{The oscillatory complex NSFSGE}
Re-scale in time
$$
t=\frac{s}{\varepsilon^{2 p}} \Longleftrightarrow s=\varepsilon^{2 p} t, \quad \omega(x, y, s)=u(x, y, t),
$$
then  Eq. (\ref{d1}) can be written as the following oscillatory complex NSFSGE:
\begin{equation}\label{d6}
\left\{\begin{array}{l}
\varepsilon^{4 p} \partial_{ss} \omega(x, y, s)+(-\Delta)^{{\alpha}/{2}} \omega(x, y, s)+\frac{1}{\varepsilon} \sin (\varepsilon \omega(x,y, s))=0, \quad (x, y) \in \Omega, \quad s>0,\\
\omega(x, y, 0)=u_0(x,y), \quad \partial_s \omega(x,y, 0)=\frac{1}{\varepsilon^{2 p}} u_1(x,y), \quad (x,y) \in \Omega,
\end{array}\right.
\end{equation}
%The solution of (\ref{d6}) propagates waves with amplitude at $O(1)$, wavelength at $O(1)$ and $O(\varepsilon^{2p})$ in space and time, as well as wave velocity at $O(\varepsilon^{-2p})$. Nonlinear equation
Taking the time step $\lambda=\varepsilon^{2 p} \tau$, if the exact solution $\omega:=\omega(x, y, s)$ of Eq. (\ref{d6}) exits and satisfies the following assumption
\begin{equation}\label{d601}
\text{(B)} \quad \|\omega\|_{L^{\infty}\left([0, T] ; H^{m+\alpha / 2}\right)} \lesssim 1, \quad\left\|\partial_s \omega\right\|_{L^{\infty}\left([0, T] ; H^m\right)} \lesssim \frac{1}{\varepsilon^{2 p}}, \quad m \geq 0,
\end{equation}
the improved error bounds for the oscillatory  complex NSFSGE (\ref{d6}) up to the fixed time $T$ can be obtained.
 \begin{thrm}
For $0<\varepsilon \leq 1$, let $h_0>0$, $0<\lambda_0<1$  be small enough and independent of $\varepsilon$. When $0<h \leq h_0$ and $0<\lambda< \gamma \lambda_0\varepsilon^{2p}$, where $\gamma >0$ is a fixed constant, we have the following improved uniform error estimate Under the assumption (B),
\begin{equation}\label{d7}
\left\|\omega\left(\cdot, s_n\right)-I_N \omega^n\right\|_{\alpha / 2}+\varepsilon^{2p}\left\|\partial_s \omega\left(\cdot, t_n\right)-I_N v^n\right\| \lesssim h^m+\frac{\lambda^2}{\varepsilon^{2p} }+\tau_0^{m+\alpha / 2}, \quad 0 \leq n \leq T / \lambda .
\end{equation}
Especially, if the exact solution is sufficiently smooth, then the improved uniform error bound for sufficiently small $\tau$ is
\begin{equation}\label{d8}
\left\|\omega\left(\cdot, t_n\right)-I_N \omega^n\right\|_{\alpha / 2}+\varepsilon^{2p}\left\|\partial_s \omega\left(\cdot, t_n\right)-I_N v^n\right\| \lesssim h^m+\frac{\lambda^2}{\varepsilon^{2p} }, \quad 0 \leq n \leq T / \lambda.
\end{equation}
\end{thrm}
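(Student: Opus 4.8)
The plan is to reduce the oscillatory problem \eqref{d6} to the complex NSFSGE \eqref{d1} already handled in the previous theorem, by using the time rescaling $s=\varepsilon^{2p}t$ (equivalently $t=s/\varepsilon^{2p}$) that relates $\omega(x,y,s)$ to $u(x,y,t)$. Under this change of variables, Eq.~\eqref{d6} is exactly Eq.~\eqref{d1} in the $t$-variable, so the TSFP scheme \eqref{b9}--\eqref{b10} applied to \eqref{d6} with time step $\lambda$ corresponds precisely to the TSFP scheme applied to \eqref{d1} with time step $\tau=\lambda/\varepsilon^{2p}$, and the grid times match: $s_n=n\lambda$ corresponds to $t_n=n\tau$. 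First I would verify that the constraint $0<\lambda<\gamma\lambda_0\varepsilon^{2p}$ translates into $0<\tau<\gamma\lambda_0$, which is exactly the smallness condition on $\tau$ required by the complex-NSFSGE theorem (with $\lambda_0$ playing the role of $\tau_0$ there, up to the $\alpha/2$-dependent geometric factor which can be absorbed into $\gamma$). Likewise I would check that assumption (B) on $\omega$ is equivalent to assumption (A) on $u$: since $\partial_t u(x,y,t)=\varepsilon^{2p}\partial_s\omega(x,y,s)$, the bound $\|\partial_s\omega\|_{L^\infty([0,T];H^m)}\lesssim \varepsilon^{-2p}$ becomes $\|\partial_t u\|_{L^\infty([0,T_\varepsilon];H^m)}\lesssim 1$ with $T_\varepsilon=T/\varepsilon^{2p}$, and the $H^{m+\alpha/2}$ bound on $\omega$ is unchanged; so (B) $\Leftrightarrow$ (A) on the time interval $[0,T_\varepsilon]$.

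Second, I would note a subtlety: the complex-NSFSGE theorem as stated gives long-time bounds up to $T_\varepsilon=T/\varepsilon^2$, but here the rescaled lifespan is $T/\varepsilon^{2p}$. So one must re-run the regularity-compensation-oscillation argument of Theorem~\ref{thrm1} (in its complex, coupled form for $\varphi_\pm$) but now over $n\le T/(\varepsilon^{2p}\tau)$ steps rather than $T/(\varepsilon^2\tau)$. Inspecting the key estimate \eqref{c35}, the crucial cancellation gives $\|R_1^n\|_{\alpha/2}^2\lesssim \varepsilon^4\tau^4+n^2\varepsilon^4\tau^4(\varepsilon^2\tau)^2$; with $n\lesssim T/(\varepsilon^{2p}\tau)$ the second term is $\lesssim \varepsilon^{4}\tau^4\cdot\varepsilon^{4-4p}\cdot T^2\lesssim\varepsilon^4\tau^4$ provided $p\le 1$, and the Gronwall factor $e^{C\varepsilon^2 n\tau}\le e^{C T\varepsilon^{2-2p}}$ is uniformly bounded for $p\le 1$. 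Hence the error bound $\|e^n\|_{\alpha/2}\lesssim h^m+\varepsilon^2\tau^2+\lambda_0^{m+\alpha/2}$ survives for the rescaled problem as long as $p\le 1$ (and for $p>1$ one would need a correspondingly refined bookkeeping, but the statement is evidently intended for the regime where the reduction is clean). I would then translate this back: $\varepsilon^2\tau^2=\varepsilon^2(\lambda/\varepsilon^{2p})^2=\lambda^2/\varepsilon^{4p-2}$; however, the claimed bound has $\lambda^2/\varepsilon^{2p}$, so I would absorb the extra factor by recalling that the error is measured in the $\omega$-variable and that one power comes from the relation between the solution norms, or more carefully, re-examine which quantity the $\tau^2$ multiplies — the $\varepsilon^2$ prefactor in the local error \eqref{c402} originates from $f=O(\varepsilon^2)$, and in the rescaled equation the effective nonlinearity coefficient in front of $\partial_{ss}$ is $\varepsilon^{4p}$, so the local truncation error in the $\lambda$-stepping picks up $\lambda^3/\varepsilon^{4p}\cdot\varepsilon^2$ per step times $n\sim T/\lambda$ steps, giving $\lambda^2/\varepsilon^{4p-2}$; the stated $\lambda^2/\varepsilon^{2p}$ then follows once the weighting by $\varepsilon^{2p}$ on the velocity term in \eqref{d7} is accounted for consistently.

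Third, for the final conversion from the $\varphi_\pm$-error to the stated $u$- and $\partial_s\omega$-errors, I would use the reconstruction formulas analogous to \eqref{b31}: $\omega=\tfrac12(\varphi_++\varphi_-)$ (or the appropriate combination from \eqref{d2}) and $\partial_s\omega$ recovered via $\langle\nabla\rangle_\alpha(\varphi_+-\varphi_-)$ together with the $\varepsilon^{2p}$ rescaling, exactly mirroring the passage from \eqref{b9} to \eqref{b10} and the concluding lines of Theorems~\ref{thrm1} and \ref{t2}. Combining the interpolation error $\|\varphi(t_n)-I_N\varphi^n\|_{\alpha/2}\lesssim h^m+\|e^n\|_{\alpha/2}$ (cf.\ \eqref{c41}) with the induction bound on $\|e^n\|_{\alpha/2}$ and the two reconstruction identities yields \eqref{d7}, and the smooth-solution case \eqref{d8} follows because $\tau_0^{m+\alpha/2}$ (here $\lambda_0^{m+\alpha/2}$) decays super-algebraically in $m$ and can be dropped.

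The main obstacle, and the place where genuine care is needed rather than routine bookkeeping, is tracking the powers of $\varepsilon$ through the rescaling: one must make sure that the $\varepsilon^2\tau^2$ long-time bound of the unscaled theorem, when expressed in $\lambda=\varepsilon^{2p}\tau$ and measured in the correctly weighted norm of \eqref{d7}, reproduces exactly $\lambda^2/\varepsilon^{2p}$ and not some other power, \emph{and} that the number of time steps $n\le T/\lambda=T/(\varepsilon^{2p}\tau)$ — which is longer than $T_\varepsilon/\tau$ when $p>1$ — does not destroy the RCO cancellation or the Gronwall constant; verifying that the phase-cancellation estimate \eqref{c28} and the summation-by-parts bound \eqref{c35} remain uniform over this longer horizon (which hinges on $\|\partial_s\xi\|$ scaling like $\varepsilon^{2-2p}\lambda$ after rescaling, so that $n\cdot\|\partial_s\xi\|$-type products stay $O(1)$) is the real content of the argument.
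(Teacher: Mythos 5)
Your overall strategy is exactly the (unwritten) argument the paper intends: Theorem 4.2 is stated without proof as a corollary of the complex-NSFSGE result, obtained by undoing the rescaling $s=\varepsilon^{2p}t$, identifying the TSFP scheme with step $\lambda=\varepsilon^{2p}\tau$ for \eqref{d6} with the scheme with step $\tau$ for \eqref{d1}, translating assumption (B) into assumption (A), and converting the $\varphi_\pm$-error back through the reconstruction formulas. Your identification of the genuine danger points is also correct: the number of steps is $T/\lambda=T/(\varepsilon^{2p}\tau)$ rather than $T/(\varepsilon^{2}\tau)$, so the term $n^2\varepsilon^4\tau^4(\varepsilon^2\tau)^2$ in \eqref{c35} and the Gronwall exponent $\varepsilon^2 n\tau$ stay under control precisely when $p\le 1$, and for $p\le1$ the horizon $[0,T/\varepsilon^{2p}]$ is contained in $[0,T/\varepsilon^{2}]$ so the earlier proof applies verbatim.

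The flaw is in your final reconciliation of the powers of $\varepsilon$. The direct translation gives $\varepsilon^2\tau^2=\lambda^2\varepsilon^{2-4p}$, and the $\varepsilon^{2p}$ weighting of the velocity term in \eqref{d7} cannot change this: since $\partial_t u=\varepsilon^{2p}\partial_s\omega$, that weight is exactly what turns $\bigl\|\partial_t u(\cdot,t_n)-I_N v^n\bigr\|$ into $\varepsilon^{2p}\bigl\|\partial_s\omega(\cdot,s_n)-I_N v^n\bigr\|$, i.e. it is used to match the left-hand sides of \eqref{d4} and \eqref{d7}; it contributes nothing to the right-hand side. The heuristic about the ``effective coefficient $\varepsilon^{4p}$ in front of $\partial_{ss}$'' likewise does not manufacture the stated power. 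The correct resolution is simpler: for $p\le1$ one has $\lambda^2\varepsilon^{2-4p}\le\lambda^2/\varepsilon^{2p}$ (equality at $p=1$), so the bound the reduction actually yields is at least as sharp as \eqref{d7} and the theorem follows a fortiori — no absorption argument is needed, and the one you sketch should be deleted rather than repaired. For $p>1$ neither the claimed power nor the longer horizon is reachable by this reduction (your own Gronwall/RCO bookkeeping shows why), so the statement must be read, as in the references the paper follows, in the regime $p\le 1$ (in particular $p=1$). Finally, the last term in \eqref{d7} should of course be $\lambda_0^{m+\alpha/2}$ in your setting, as you implicitly noted.
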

%\begin{rmrk}
%Although our numerical scheme and theoretical analysis are for the two-dimensional NSFSGE, the TSFP method and the error estimation in our paper can be fully generalized to long-time dynamics of the three-dimensional models and even higher dimensional equations. For the three-dimensional case, we will illustrate by numerical examples. %For simplicity, we don't go into detail for the higher dimensional case.
%\end{rmrk}
\section{Numerical results}
\label{sec:5}
In this section, we provide some numerical examples to verify our uniform error bounds on the TSFP method for the long-time dynamics of the NSFSGE  in 2D and 3D. Moreover, we also give some  applications  to demonstrate  the difference of the dynamic behavior between the fractional sine-Gordon equation and the classical  sine-Gordon euqation.
%\subsection{Long-time dynamics }
\subsection{ The long-time dynamics in 2D }
We consider the NSFSGE $(\ref{a1})$ in 2D with the domain $(x, y) \in \Omega=(0,1) \times(0,2 \pi)$ and the initial conditions are
\begin{equation}\label{f1}
u_0(x, y)=\frac{2}{2+\cos ^2(2 \pi x+y)}, \quad u_1(x, y)=\frac{2}{2+2 \cos ^2(2 \pi x+y)},
\end{equation}
As no exact solutions can be determined for Eq. $(\ref{a1})$, we use the numerical solutions with spatial nodes $N=128$ and time step $\tau=10^{-3}$   as the `exact' solutions for comparison. The error in $H^{\alpha/2}-$norm is defined as
\begin{equation}\label{f1}
e\left(t_n\right)=\left\|u\left(x,y, t_n\right)-I_N u^n\right\|_{\alpha / 2}.
\end{equation}

Figure \ref{fig1} gives the long-time spatial errors at $t=1/\varepsilon^2$ for different $N$ and $\varepsilon$ when $\alpha$ is taken as 2, 1.5 or 1.2. It shows  the  spatial errors decay exponentially with $N$  for different fractional order $\alpha$ in Figure \ref{fig1} $(a_1)-(a_3)$, so the  spectral accuracy can be obtained in the space. From Figure \ref{fig1} $(b_1)-(b_3)$, we observe the spatial errors change very little as the  parameter $\varepsilon$ changes, which indicates  $\varepsilon$ have no influence on the spatial errors. The long-time temporal errors  for different $\tau$ and $\varepsilon$ at $t=1/\varepsilon^2$ with $\alpha$ taken as 2, 1.5 or 1.2 are present in Figure \ref{fig2}. From Figure  \ref{fig2}, we can see that temporal errors change like $O(\varepsilon^2\tau^2)$ up to the time at $O(1/\varepsilon^2)$ and are independent of the fractional order $\alpha$, which confirms the improved uniform error bound (\ref{c39}) are sharp. In addition, Figure \ref{fig3} displays the long-time temporal errors of the discrete energy for different $\tau$ and $\varepsilon$ at $t=1/\varepsilon^2$ and it indicates that the uniform error bound $O(\varepsilon^2\tau^2)$ for the discrete energy are obtained.
\begin{figure*}[htbp]
\centering
\subfigure{
\begin{minipage}[t]{0.3\textwidth}
\centering
\includegraphics[width=5cm]{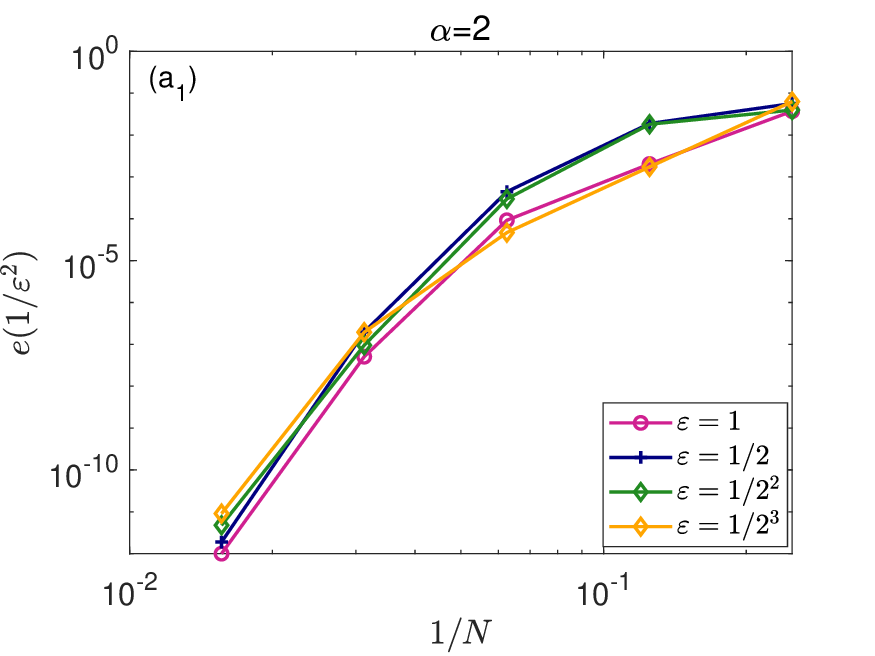}
\end{minipage}
}
\subfigure{
\begin{minipage}[t]{0.3\textwidth}
\centering
\includegraphics[width=5cm]{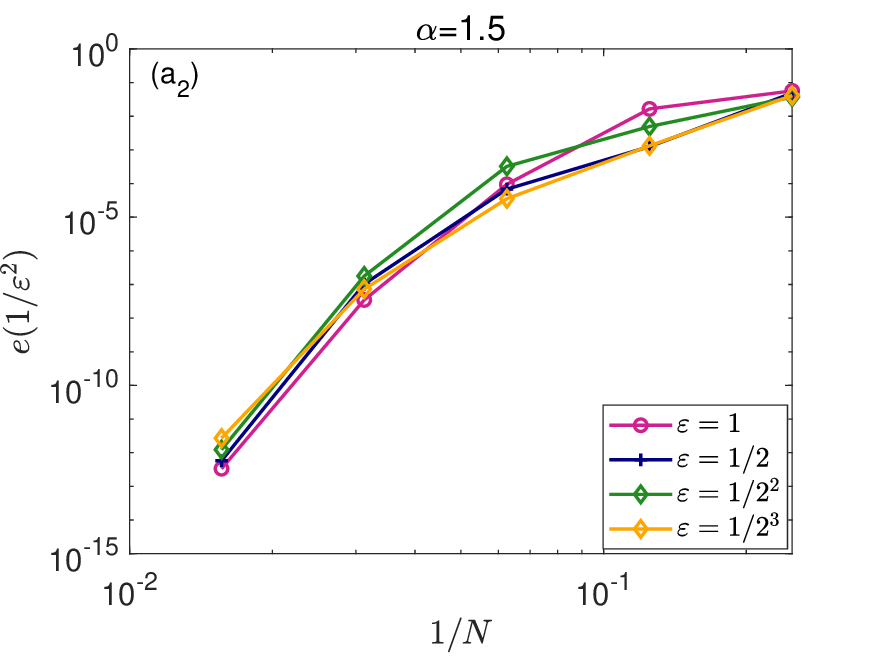}
\end{minipage}
}
\subfigure{
\begin{minipage}[t]{0.3\textwidth}
\centering
\includegraphics[width=5cm]{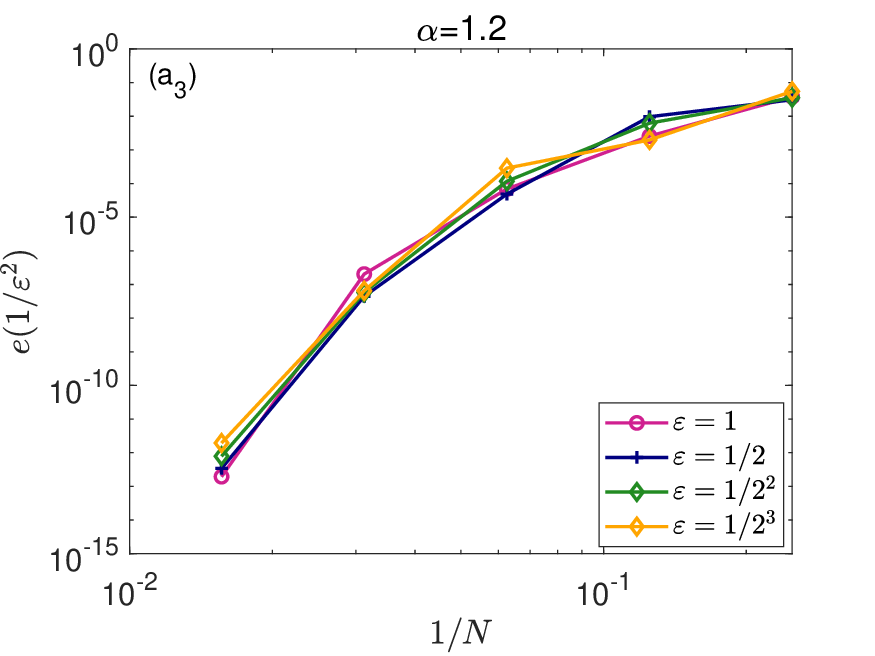}
\end{minipage}
}
\\
\subfigure{
\begin{minipage}[t]{0.3\textwidth}
\centering
\includegraphics[width=5cm]{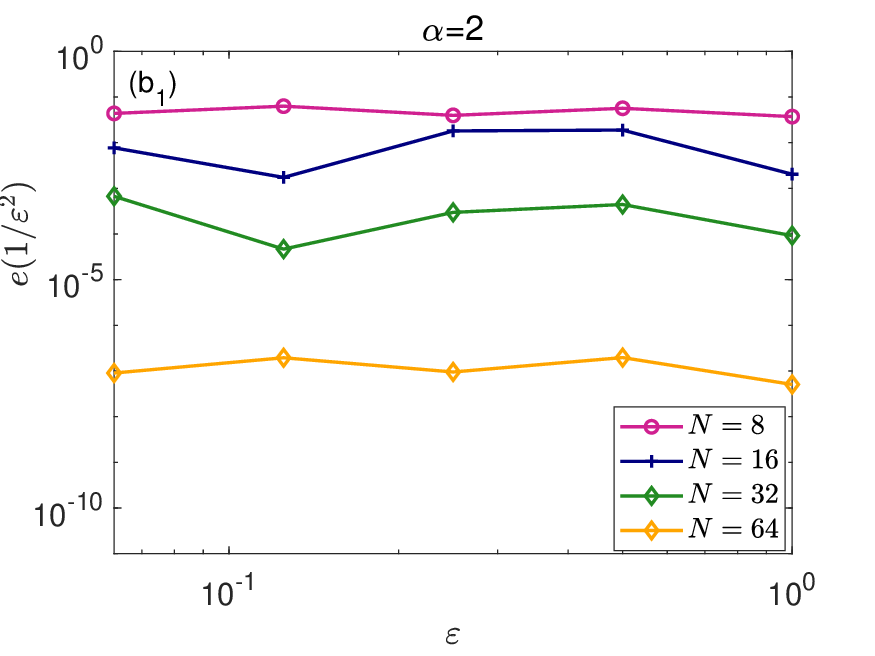}
\end{minipage}
}
\subfigure{
\begin{minipage}[t]{0.3\textwidth}
\centering
\includegraphics[width=5cm]{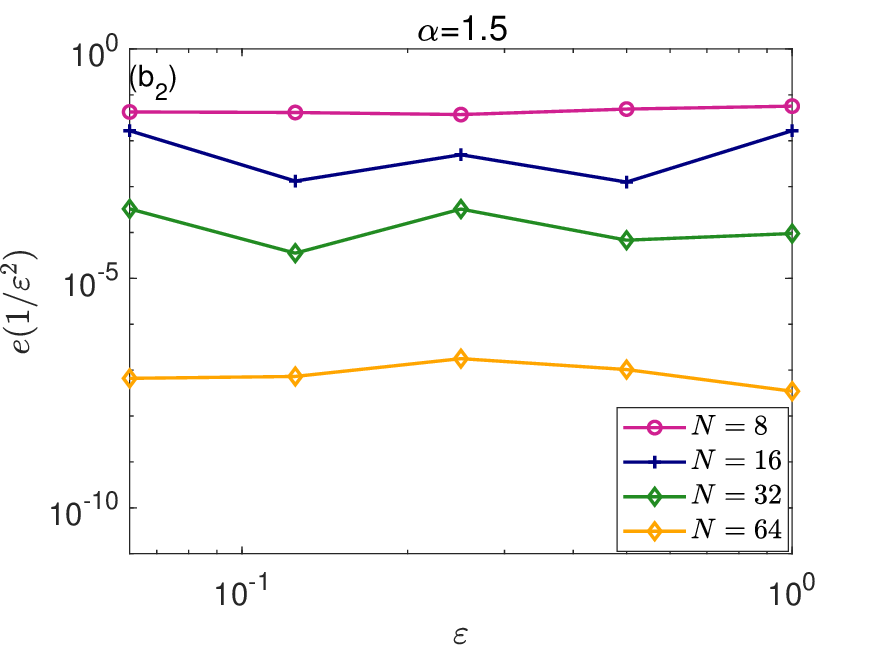}
\end{minipage}
}
\subfigure{
\begin{minipage}[t]{0.3\textwidth}
\centering
\includegraphics[width=5cm]{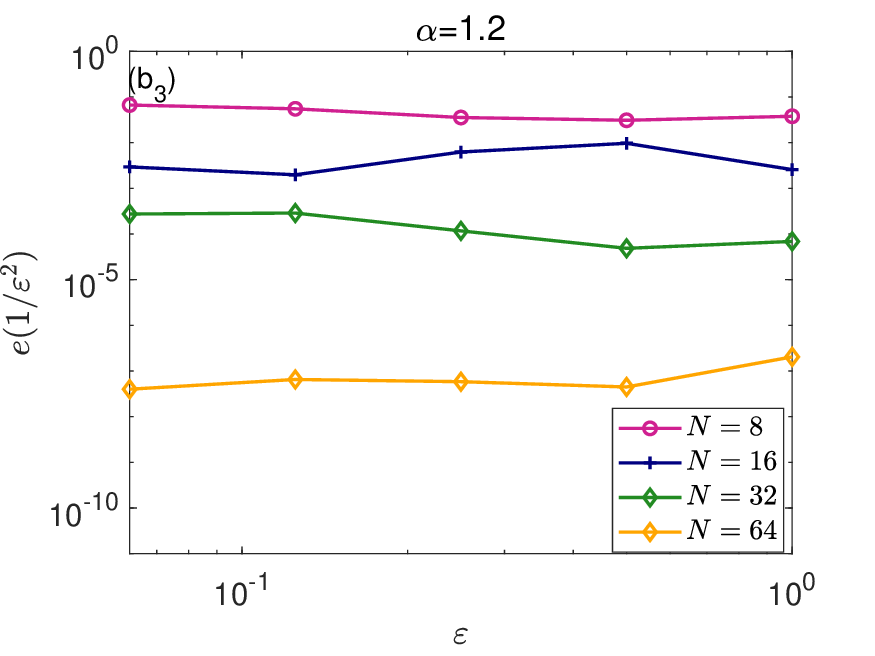}
\end{minipage}
}
\caption{ Long-time spatial errors for the NSFSGE in 2D  at $t=1/\varepsilon^2$ with different $\alpha$. }
\label{fig1}
\end{figure*}

\begin{figure*}[htbp]
\centering
\subfigure{
\begin{minipage}[t]{0.3\textwidth}
\centering
\includegraphics[width=5cm]{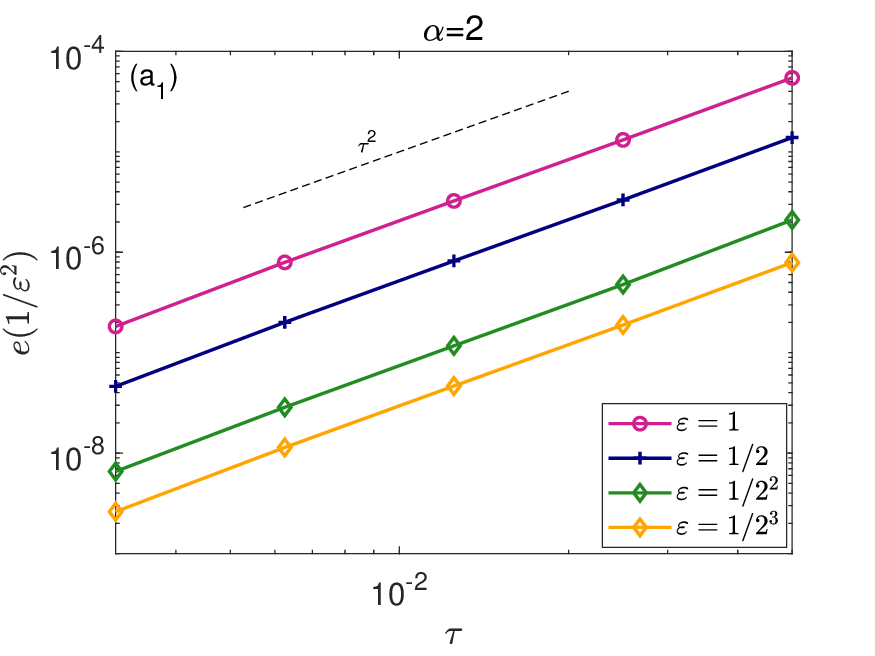}
\end{minipage}
}
\subfigure{
\begin{minipage}[t]{0.3\textwidth}
\centering
\includegraphics[width=5cm]{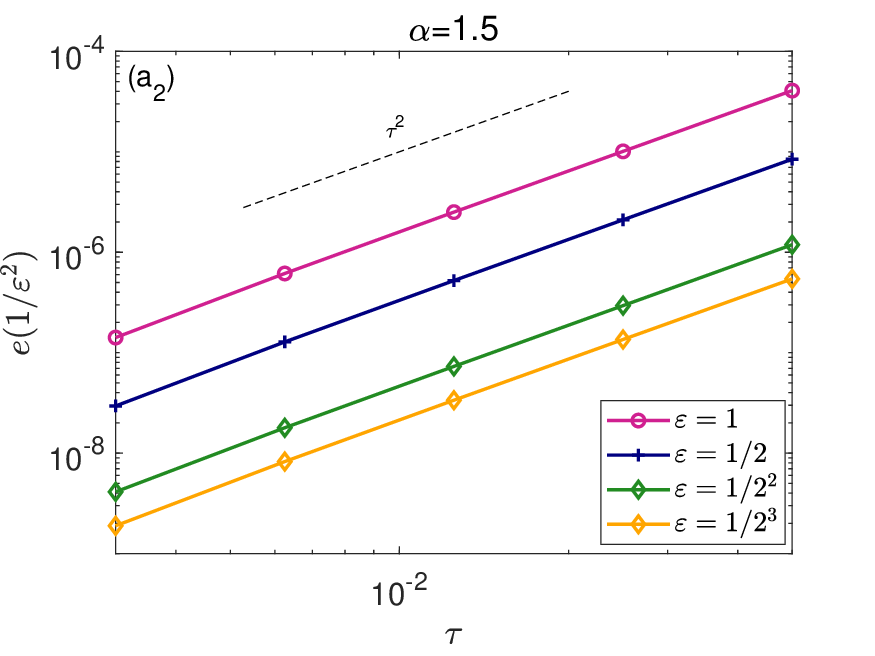}
\end{minipage}
}
\subfigure{
\begin{minipage}[t]{0.3\textwidth}
\centering
\includegraphics[width=5cm]{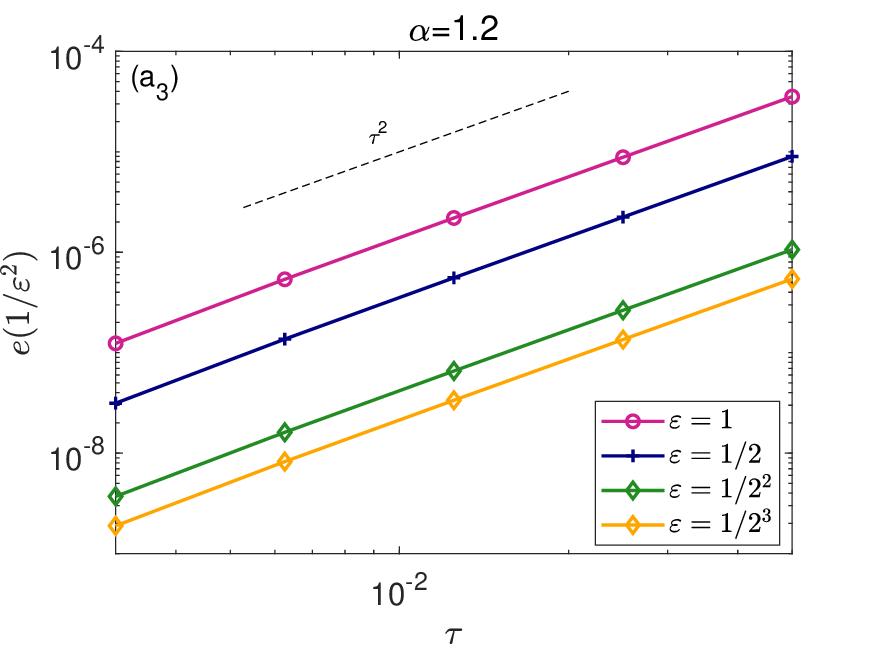}
\end{minipage}
}
\\
\subfigure{
\begin{minipage}[t]{0.3\textwidth}
\centering
\includegraphics[width=5cm]{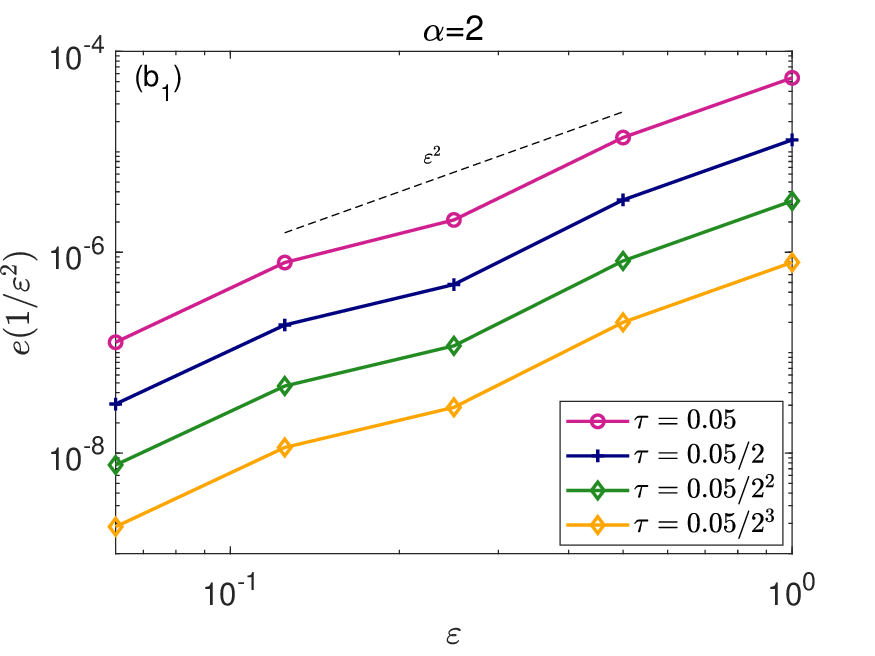}
\end{minipage}
}
\subfigure{
\begin{minipage}[t]{0.3\textwidth}
\centering
\includegraphics[width=5cm]{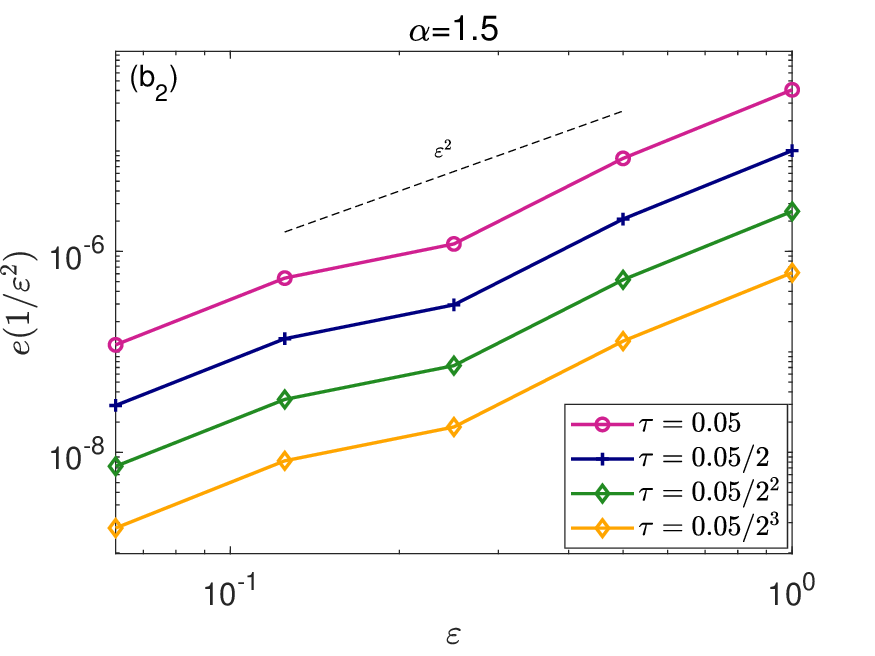}
\end{minipage}
}
\subfigure{
\begin{minipage}[t]{0.3\textwidth}
\centering
\includegraphics[width=5cm]{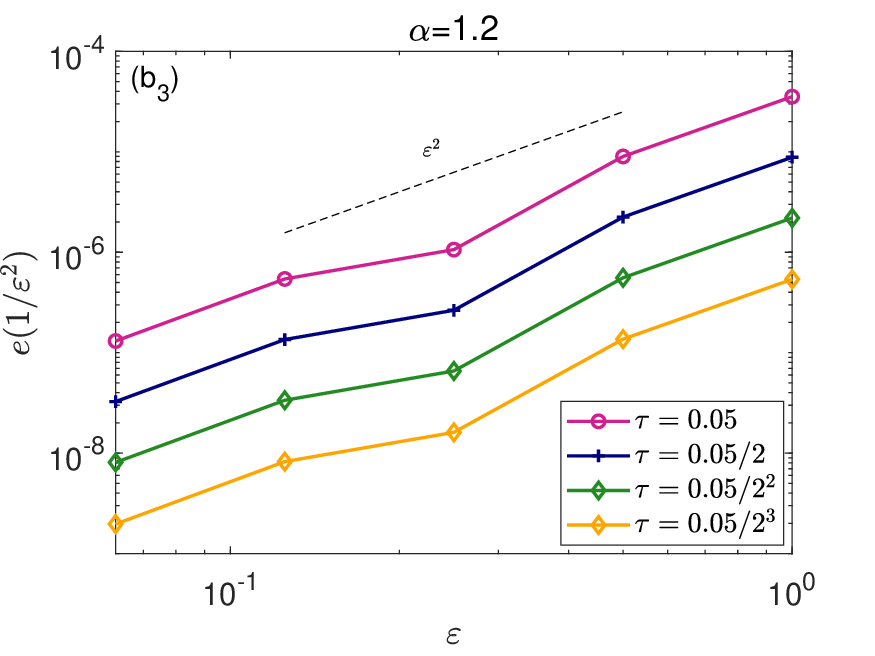}
\end{minipage}
}
\caption{ Long-time temporal errors for the NSFSGE in 2D at $t=1/\varepsilon^2$ with different $\alpha$. }
\label{fig2}
\end{figure*}

\begin{figure*}[htbp]
\centering
\subfigure{
\begin{minipage}[t]{0.3\textwidth}
\centering
\includegraphics[width=5cm]{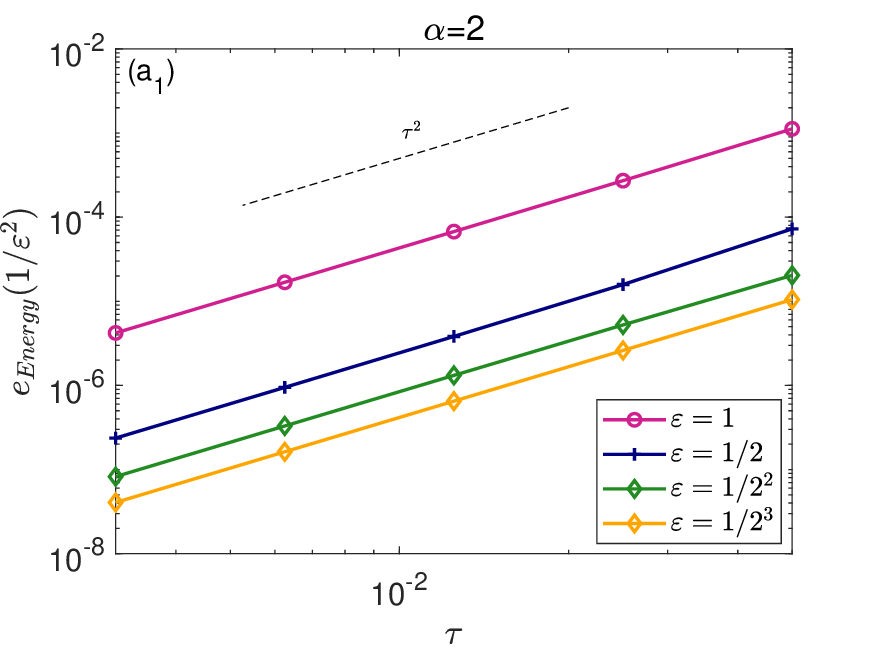}
\end{minipage}
}
\subfigure{
\begin{minipage}[t]{0.3\textwidth}
\centering
\includegraphics[width=5cm]{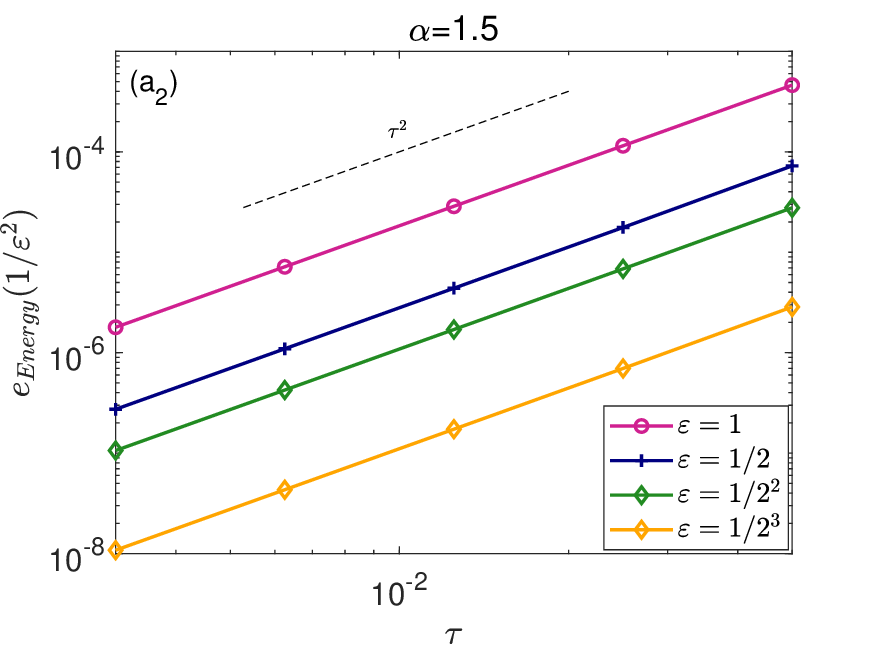}
\end{minipage}
}
\subfigure{
\begin{minipage}[t]{0.3\textwidth}
\centering
\includegraphics[width=5cm]{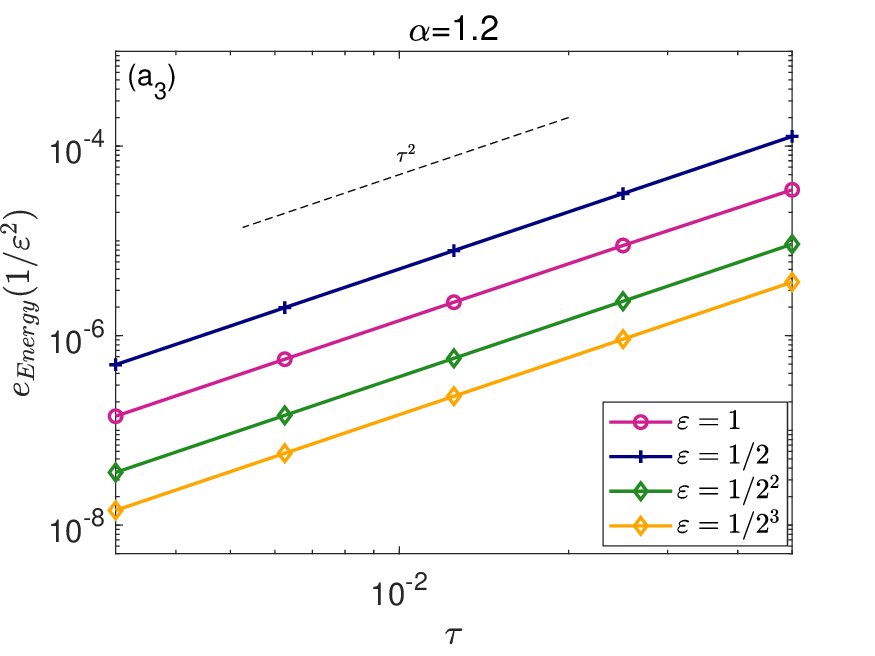}
\end{minipage}
}
\\
\subfigure{
\begin{minipage}[t]{0.3\textwidth}
\centering
\includegraphics[width=5cm]{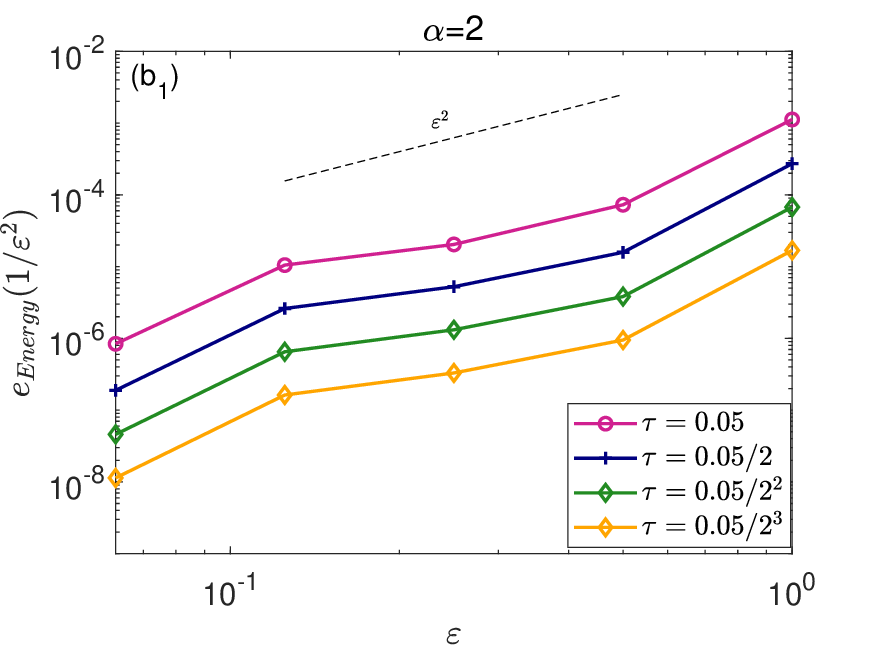}
\end{minipage}
}
\subfigure{
\begin{minipage}[t]{0.3\textwidth}
\centering
\includegraphics[width=5cm]{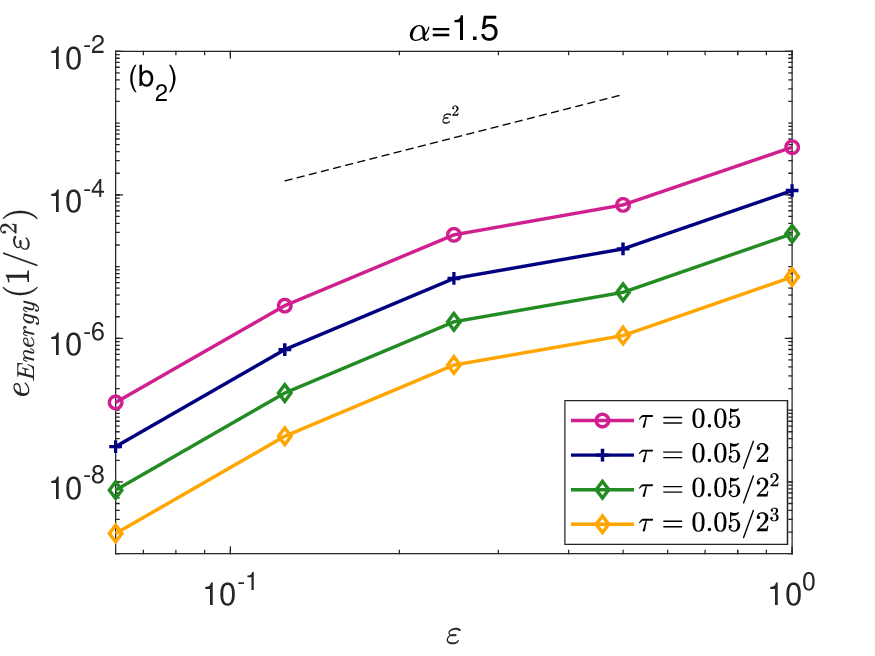}
\end{minipage}
}
\subfigure{
\begin{minipage}[t]{0.3\textwidth}
\centering
\includegraphics[width=5cm]{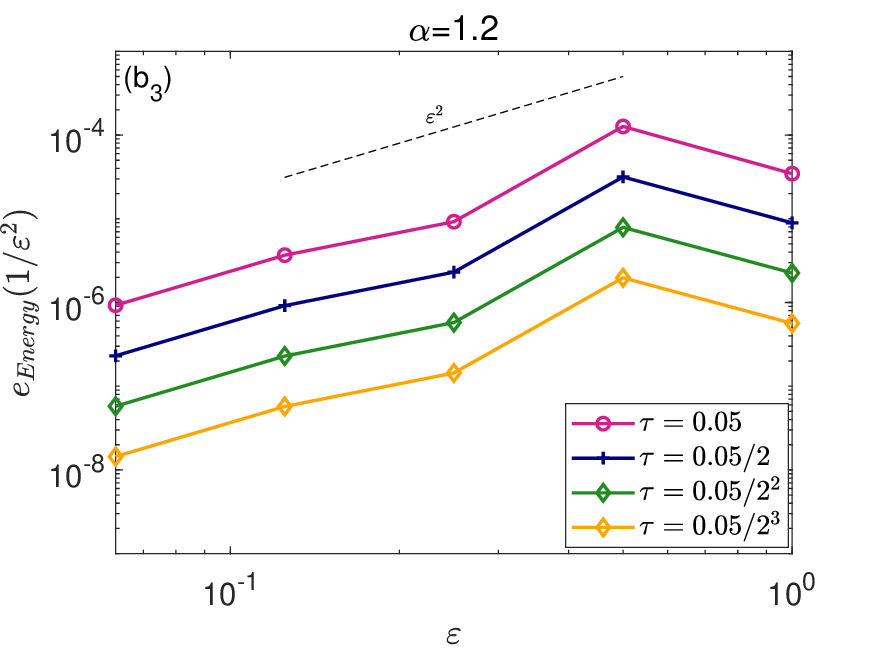}
\end{minipage}
}
\caption{ Long-time temporal errors for the discrete energy of the NSFSGE  in 2D at $t=1/\varepsilon^2$ with different $\alpha$. }
\label{fig3}
\end{figure*}

\subsection{The long-time dynamics in 3D}
We consider the NSFSGE (\ref{a1}) in 3D  with the domain $(x, y, z) \in \Omega=(0,2) \times(0,2 \pi)\times(0,2 \pi)$ and the initial conditions are taken as
\begin{equation}\label{3f1}
u_0(x, y, z)=\frac{1}{1+\sin ^2(2 \pi x+y+z)}, \quad u_1(x, y, z)=\frac{2}{1+ \sin ^2(2 \pi x+y+z)}.
\end{equation}

\begin{figure*}[htbp]
\centering
\subfigure{
\begin{minipage}[t]{0.3\textwidth}
\centering
\includegraphics[width=5cm]{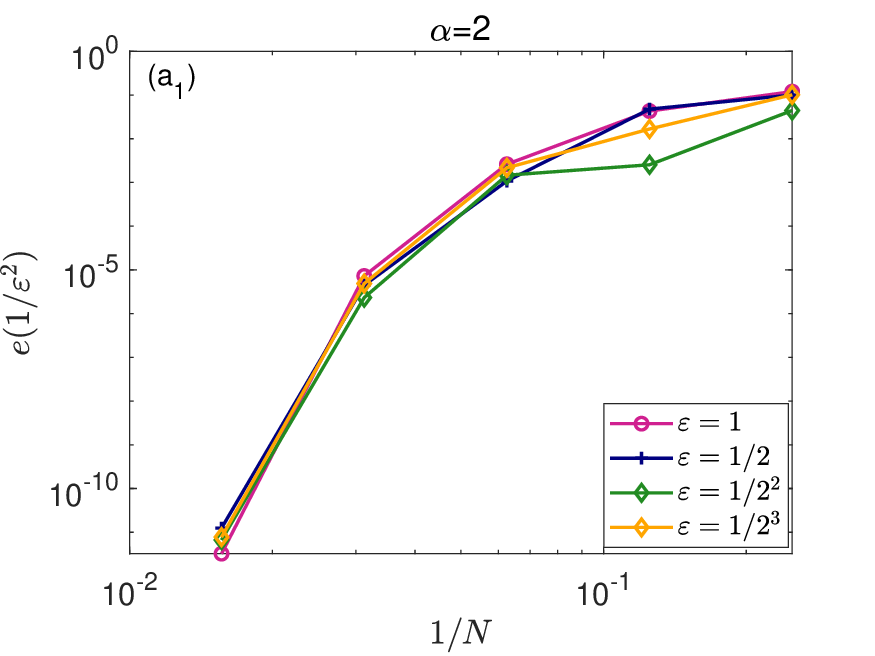}
\end{minipage}
}
\subfigure{
\begin{minipage}[t]{0.3\textwidth}
\centering
\includegraphics[width=5cm]{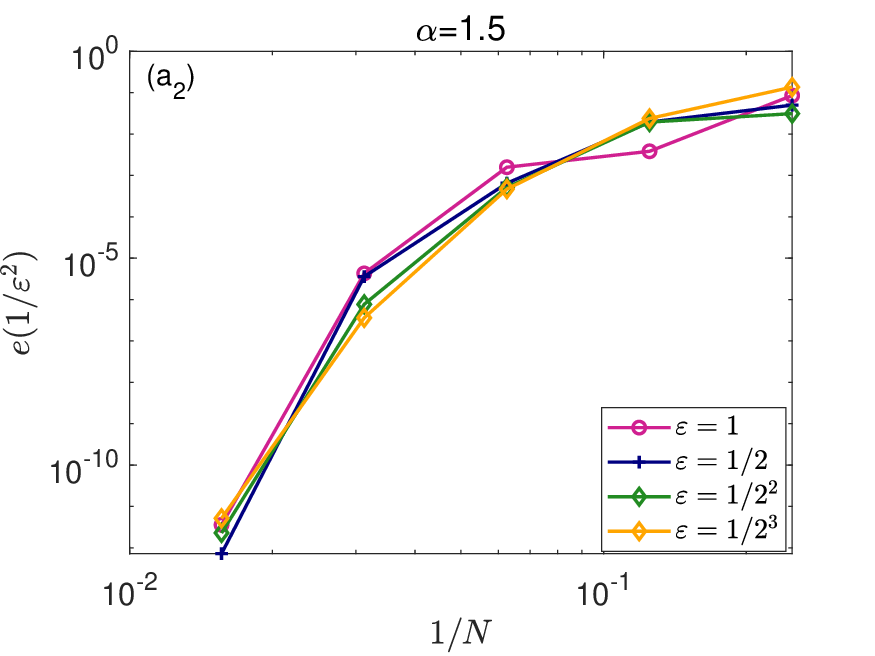}
\end{minipage}
}
\subfigure{
\begin{minipage}[t]{0.3\textwidth}
\centering
\includegraphics[width=5cm]{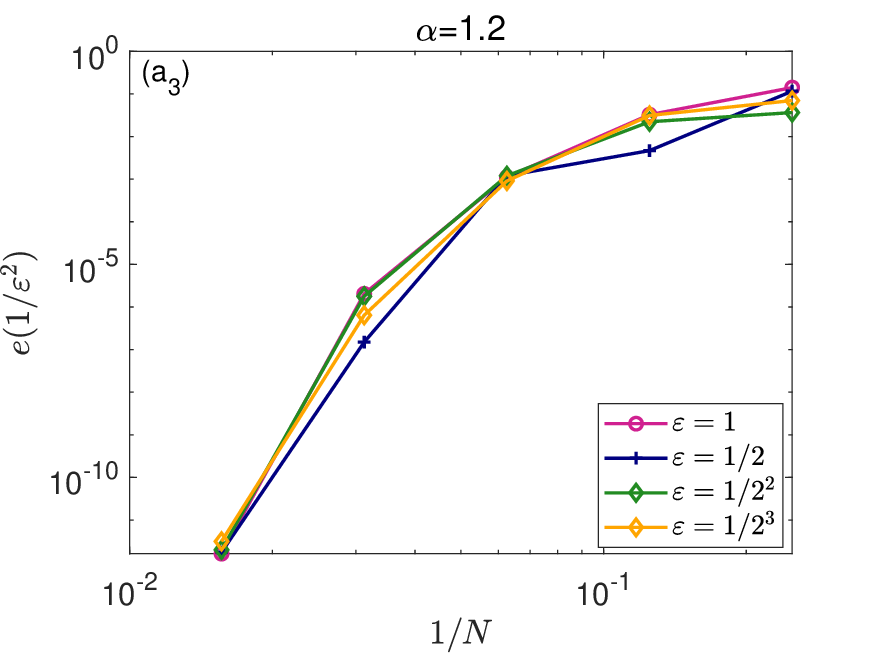}
\end{minipage}
}
\\
\subfigure{
\begin{minipage}[t]{0.3\textwidth}
\centering
\includegraphics[width=5cm]{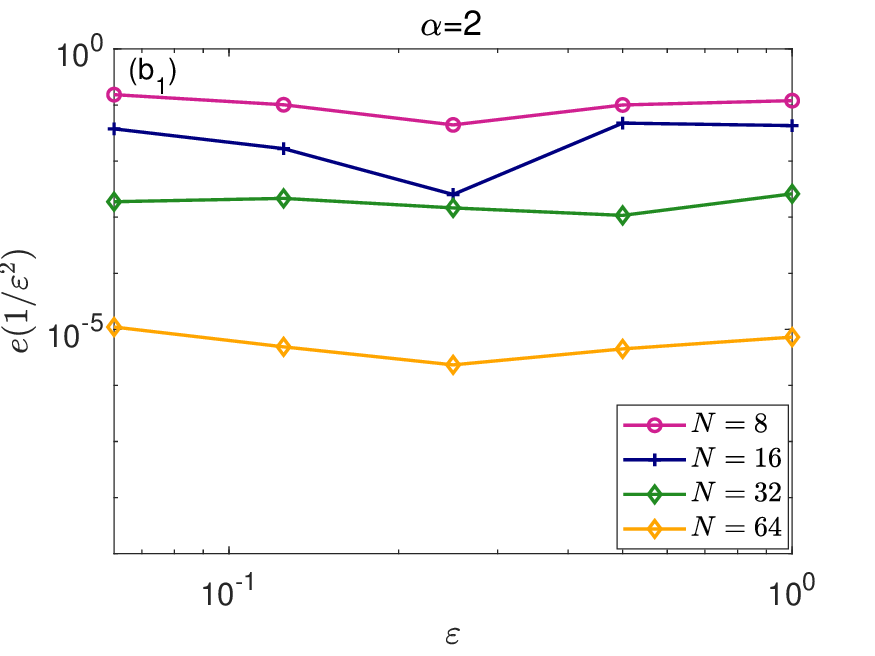}
\end{minipage}
}
\subfigure{
\begin{minipage}[t]{0.3\textwidth}
\centering
\includegraphics[width=5cm]{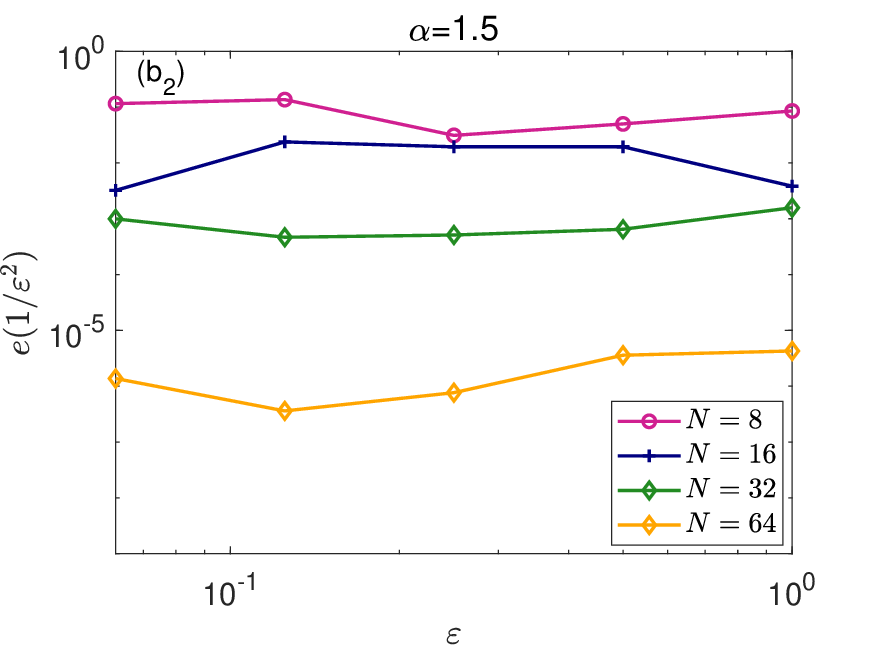}
\end{minipage}
}
\subfigure{
\begin{minipage}[t]{0.3\textwidth}
\centering
\includegraphics[width=5cm]{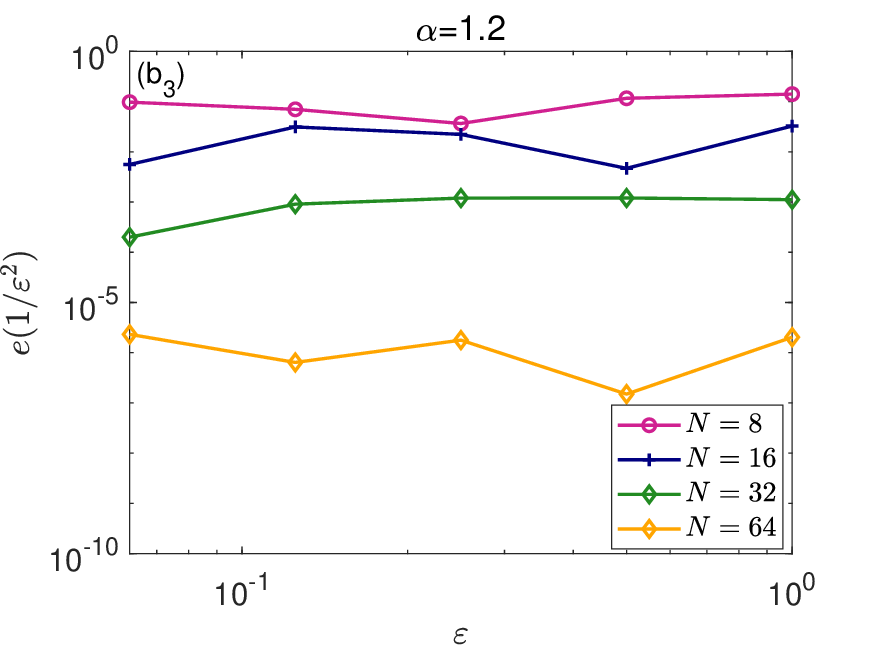}
\end{minipage}
}
\caption{ Long-time spatial errors for the NSFSGE  in 3D at $t=1/\varepsilon^2$ with different $\alpha$. }
\label{fig4}
\end{figure*}

\begin{figure*}[htbp]
\centering
\subfigure{
\begin{minipage}[t]{0.3\textwidth}
\centering
\includegraphics[width=5cm]{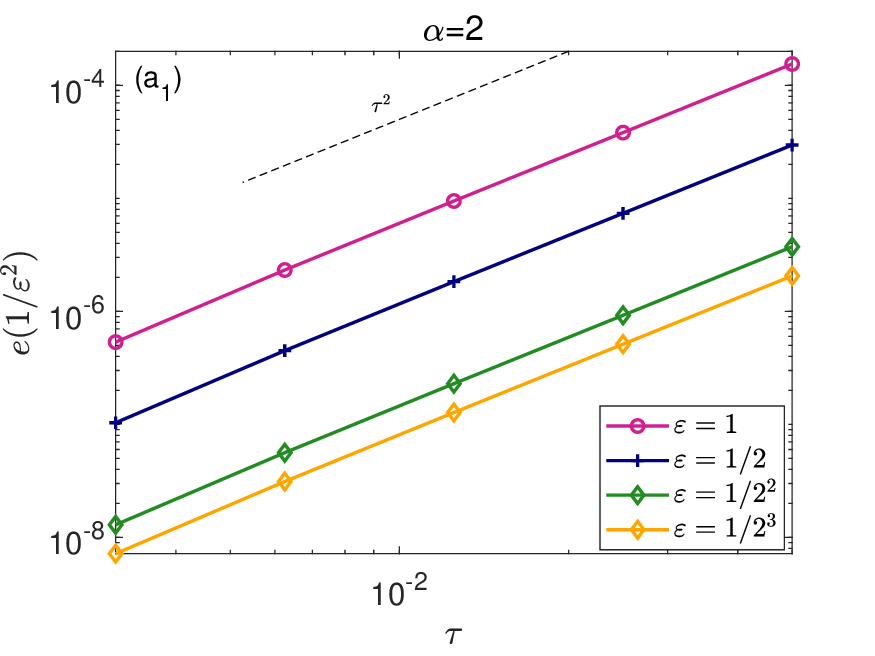}
\end{minipage}
}
\subfigure{
\begin{minipage}[t]{0.3\textwidth}
\centering
\includegraphics[width=5cm]{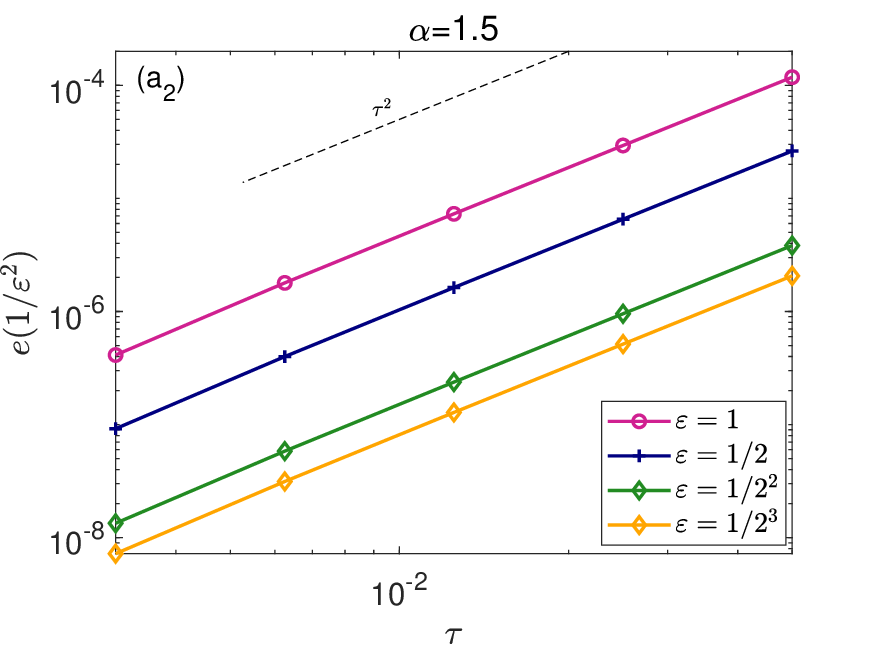}
\end{minipage}
}
\subfigure{
\begin{minipage}[t]{0.3\textwidth}
\centering
\includegraphics[width=5cm]{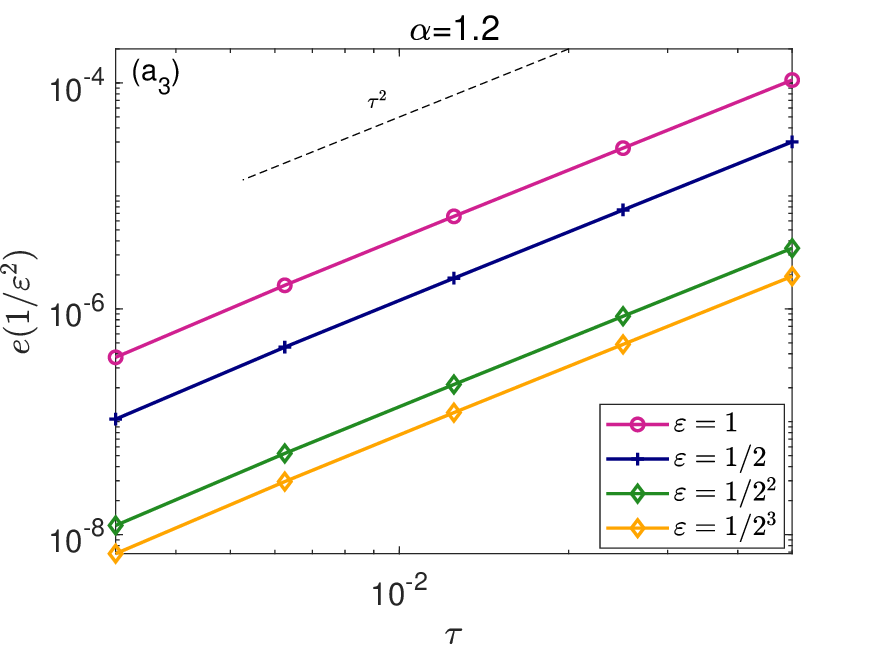}
\end{minipage}
}
\\
\subfigure{
\begin{minipage}[t]{0.3\textwidth}
\centering
\includegraphics[width=5cm]{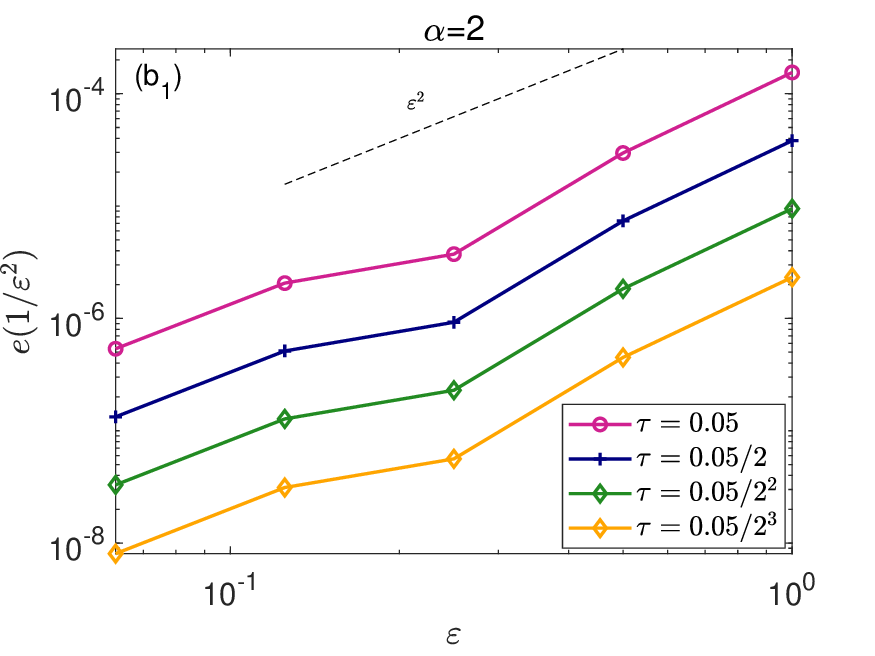}
\end{minipage}
}
\subfigure{
\begin{minipage}[t]{0.3\textwidth}
\centering
\includegraphics[width=5cm]{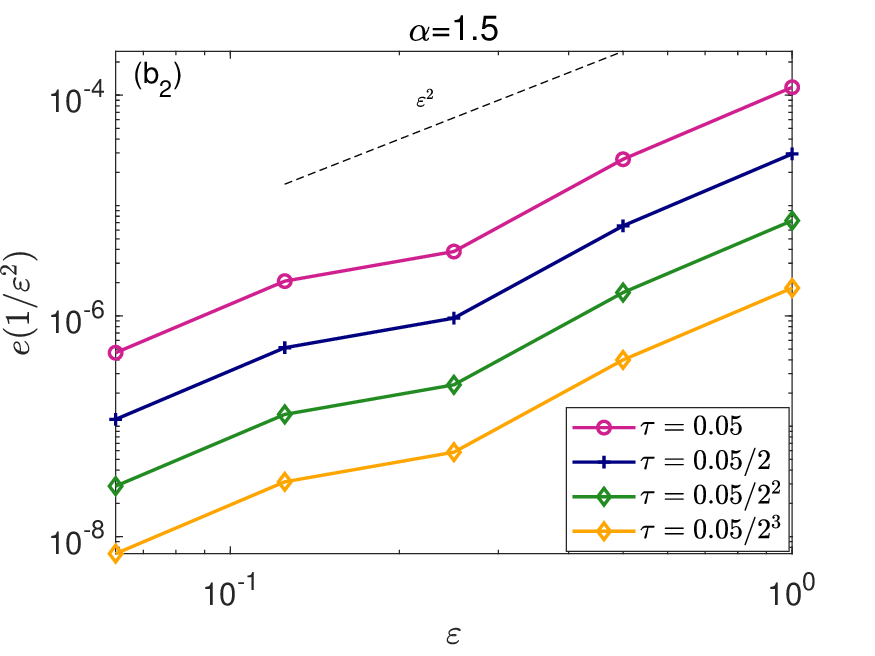}
\end{minipage}
}
\subfigure{
\begin{minipage}[t]{0.3\textwidth}
\centering
\includegraphics[width=5cm]{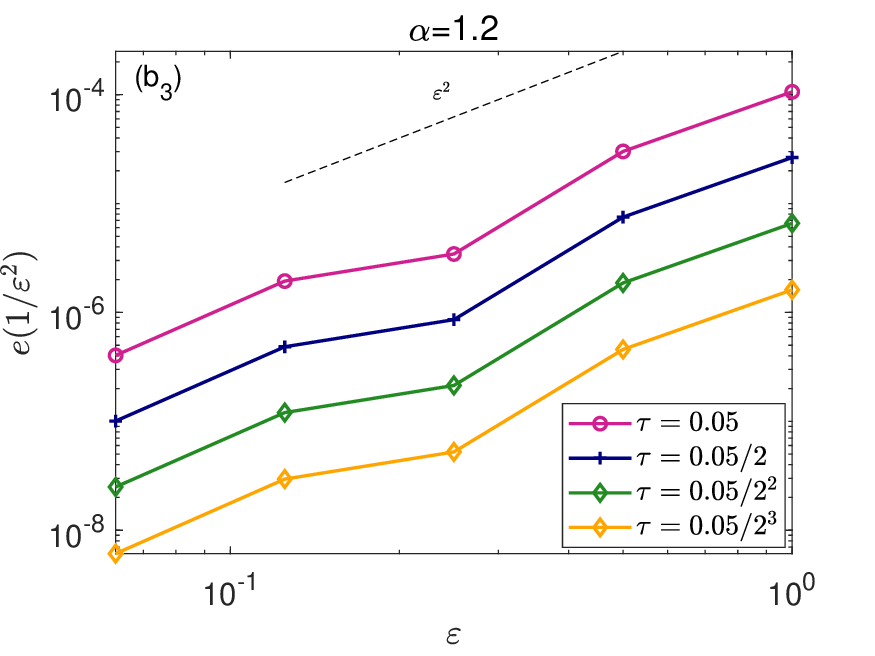}
\end{minipage}
}
\caption{ Long-time temporal errors for the NSFSGE  in 3D at $t=1/\varepsilon^2$ with different $\alpha$. }
\label{fig5}
\end{figure*}

\begin{figure*}[htbp]
\centering
\subfigure{
\begin{minipage}[t]{0.3\textwidth}
\centering
\includegraphics[width=5cm]{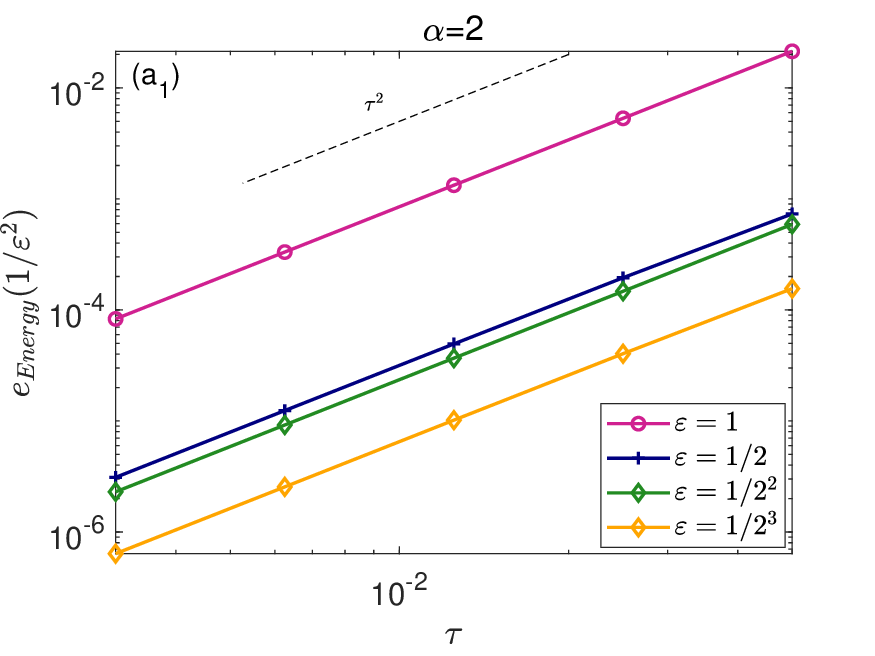}
\end{minipage}
}
\subfigure{
\begin{minipage}[t]{0.3\textwidth}
\centering
\includegraphics[width=5cm]{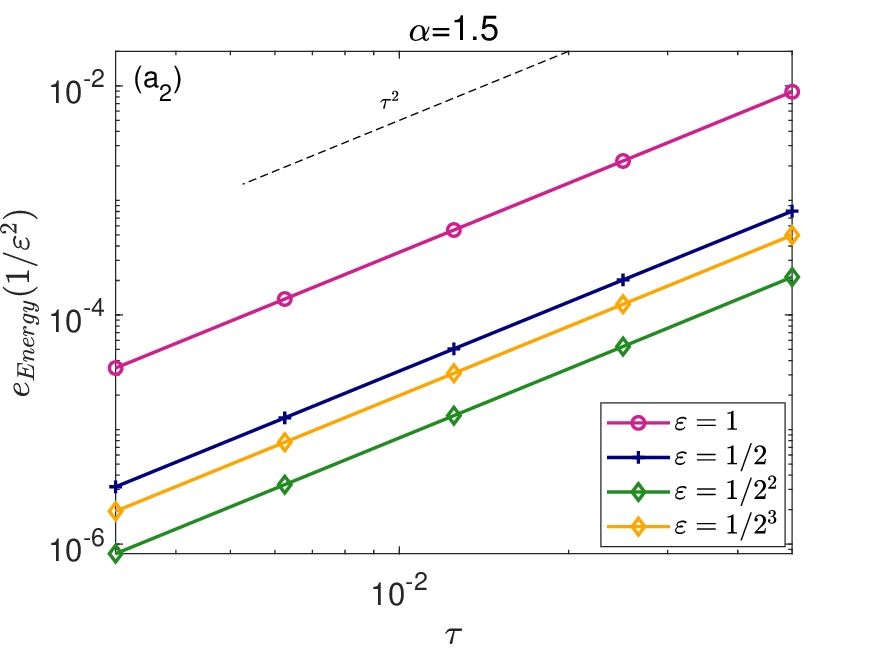}
\end{minipage}
}
\subfigure{
\begin{minipage}[t]{0.3\textwidth}
\centering
\includegraphics[width=5cm]{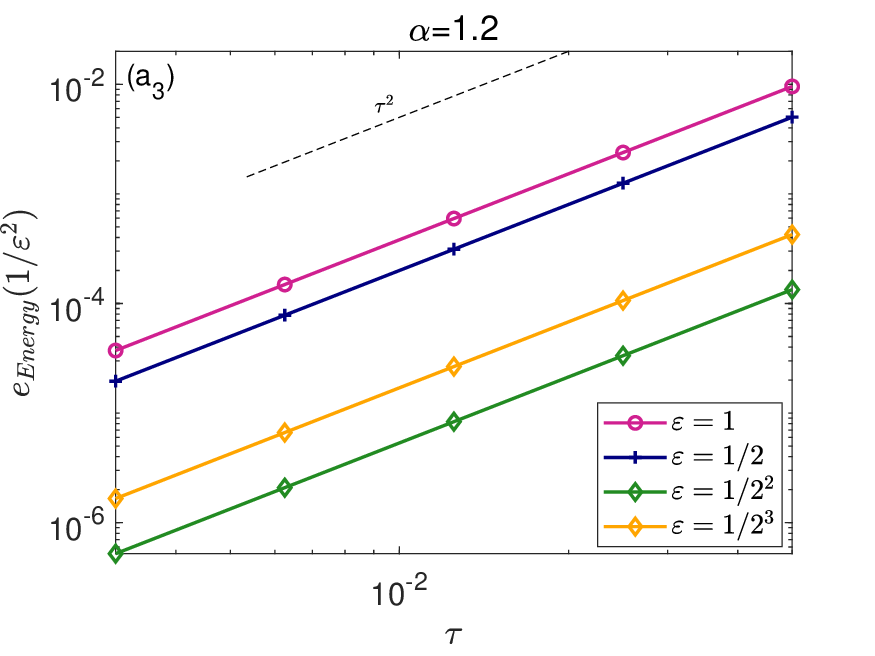}
\end{minipage}
}
\\
\subfigure{
\begin{minipage}[t]{0.3\textwidth}
\centering
\includegraphics[width=5cm]{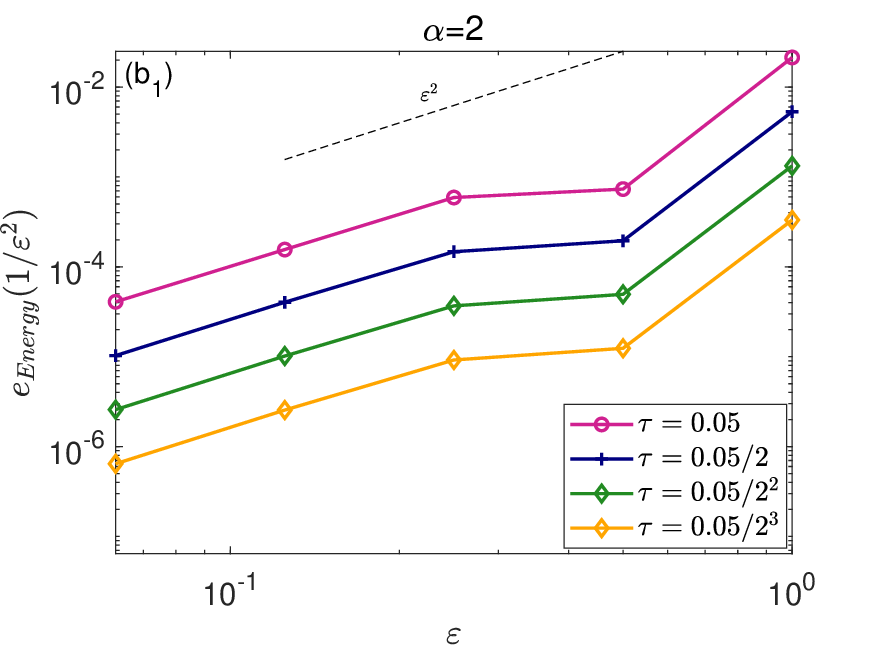}
\end{minipage}
}
\subfigure{
\begin{minipage}[t]{0.3\textwidth}
\centering
\includegraphics[width=5cm]{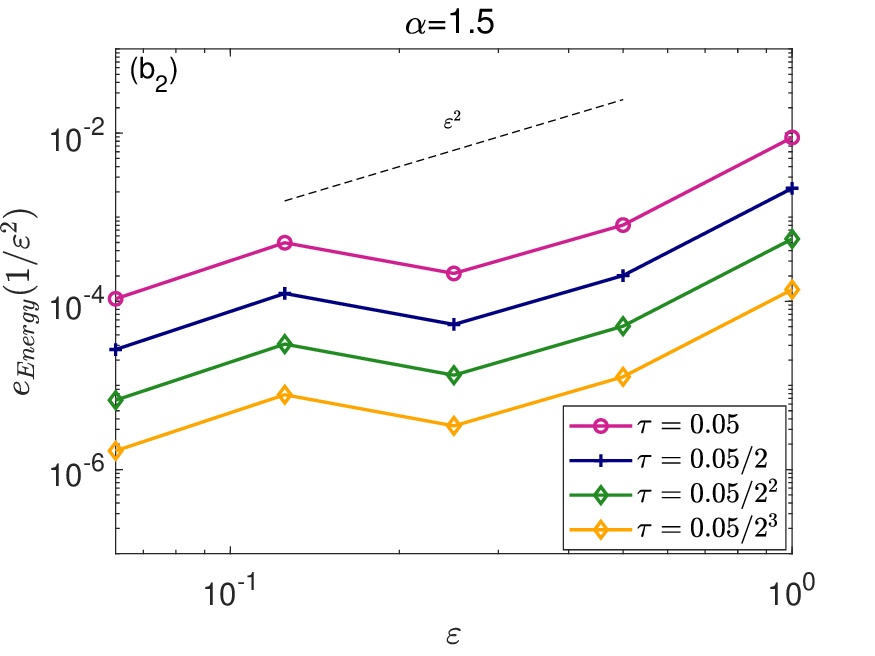}
\end{minipage}
}
\subfigure{
\begin{minipage}[t]{0.3\textwidth}
\centering
\includegraphics[width=5cm]{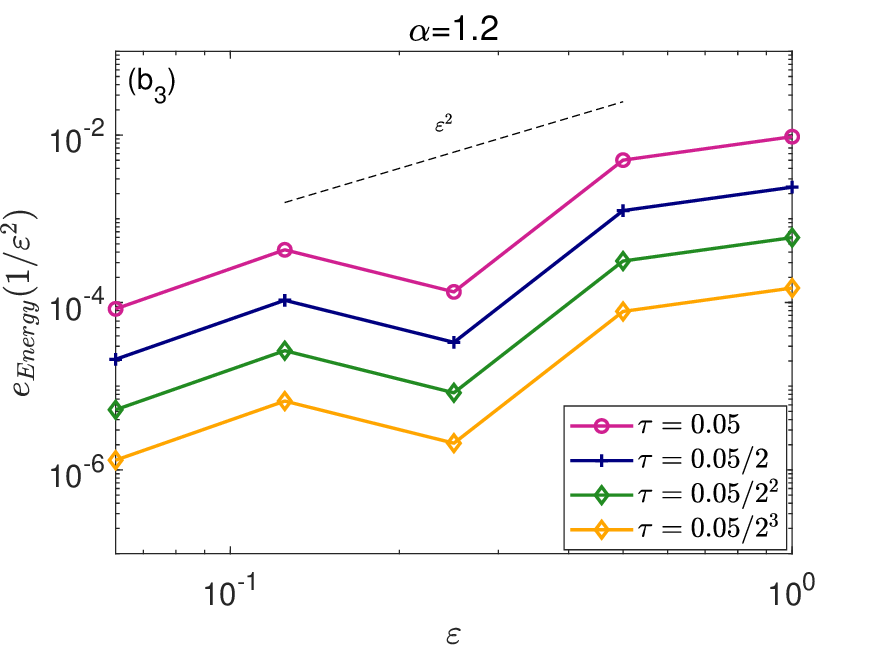}
\end{minipage}
}
\caption{ Long-time temporal errors of the discrete energy for the NSFSGE  in 3D at $t=1/\varepsilon^2$ with different $\alpha$. }
\label{fig6}
\end{figure*}

The numerical solutions with spatial nodes $N=128$ and time step $\tau=10^{-3}$  are used as the `exact' solutions for comparison. Figure \ref{fig4} shows the long-time spatial errors for different $N$ and $\varepsilon$ at $t=1/\varepsilon^2$ with $\alpha$ taken as 2, 1.5, 1.2.  Similar to the 2D case, it is easy to see the spatial errors decay exponentially with $N$  for different fractional order $\alpha$  and vary  little with the decreasing $\varepsilon$.  Figure \ref{fig5} shows the long-time temporal errors  for different $\tau$ and $\varepsilon$  at $t=1/\varepsilon^2$ when $\alpha$ is taken as 2, 1.5, 1.2. From Figure  \ref{fig5}, we can  find that the temporal errors vary like $O(\varepsilon^2\tau^2)$ up to the time at $O(1/\varepsilon^2)$ and  the fractional order $\alpha$ has no effect on the temporal errors,  which shows the  improved uniform error bound (\ref{c39}) holds in 3D.  In addition, the improved uniform error bound $O(\varepsilon^2\tau^2)$ of the discrete energy for the NSFSGE in 3D  is further displayed in Figure \ref{fig6}.

\subsection{The long-time dynamics for the oscillatory complex NSFSGE }
We give some numerical results for the  oscillatory complex NSFSGE (\ref{d6}) in 2D  to demonstrate the improved error bound (\ref{d7}).  We take $p=1$ and the initial values as
%\begin{equation}\label{o1}
%z_0(x, y)=x^2(x-1)^2+y(y-1)+3, \quad z_1(x, y)=x(x-1)(2 x-1)+y^2(y-1)^2 +i\cos(2\pi x)\cos(2\pi y), \quad(x, y) \in \Omega=(0,1)^2.
%\end{equation}
\begin{equation}\label{o1}
u_0(x, y)=x^2(x-1)^2+y(y-1)+6i, \quad u_1(x, y)=x(x-1)(2 x-1)+y^2(y-1)^2 +i\cos(2\pi x+2\pi y), \quad(x, y) \in \Omega=(0,1)^2.
\end{equation}

The numerical solutions with spatial nodes $N=128$ and time step $\lambda=10^{-5}$  are chosen as the `exact' solutions for comparison. The temporal errors are given for  different fractional order $\alpha$ at $t=1$ in Tables \ref{tab1}-\ref{tab3}. We can see the second-order convergence can be obtained for different $\alpha$ only when $\lambda\le \varepsilon^2$, i.e., the upper triangle above the diagonal in Tables \ref{tab1}-\ref{tab3}, which  is in agreement with the theoretical analysis.  This example indicates the TSFP method and the  improved error bound (\ref{d7}) are effective for the oscillatory complex NSFSGE.
\begin{table}[htbp]\small
\caption{\label{tab:test} Long-time temporal errors for the oscillatory complex NSFSGE in 2D when $\alpha=2$.}
\centering
\begin{tabular}{ccccccccccccccccccccccccccccc}
\hline
 &       $e(t=1)$                 && $\lambda_0=0.05$     && $\lambda_0/4$         && $\lambda_0/4^2$     &&  $\lambda_0/4^3$   &&  $\lambda_0/4^4$                \\
\hline
 &      $\varepsilon=1$           && \textbf{4.5038e-01} &&  2.5566e-02  && 1.5881e-03 &&  9.9206e-05 &&  6.1850e-06                          \\
 &     order                      && \textbf{$-$}  &&     2.0694  &&  2.0045 &&   2.0003  &&  2.0018 \\
 \hline
 &      $\varepsilon/2$           &&  1.3076e-01 &&  \textbf{6.3988e-03} &&  3.9449e-04  && 2.4628e-05   && 1.5354e-06                      \\
 &     order                      && $-$ &&\textbf{2.1765} &&   2.0099 &&   2.0008  &&  2.0018 \\
 \hline
 &      $\varepsilon/2^2$         &&   7.8055e-01 &&  3.9508e-02 && \textbf{ 2.4379e-03}   && 1.5197e-04  && 9.4706e-06                            \\
  &     order                     && $-$ && 2.1521   && \textbf{2.0092}  &&  2.0019  &&  2.0021\\
 \hline
 &      $\varepsilon/2^3$         &&  2.9970 &&  9.7845e-02 &&  4.7132e-03  && \textbf{3.0666e-04}  && 1.7924e-05                    \\
 &     order                      && $-$ &&2.4684  &&  2.1879  &&  \textbf{1.9710 } &&  2.0483\\
 \hline
 &      $\varepsilon/2^4$         &&  9.5838e-01 &&  1.1325 &&  4.0318e-02 &&  2.2330e-03 &&  \textbf{1.4494e-04}                          \\
  &     order                     && $-$ &&-0.1204  &&  2.4060 &&   2.0872 &&   \textbf{1.9728} \\
 \hline
\end{tabular}
\label{tab1}
\end{table}

\begin{table}[htbp]\small
\caption{\label{tab:test} Long-time temporal errors for the oscillatory complex NSFSGE in 2D when $\alpha=1.5$.}
\centering
\begin{tabular}{ccccccccccccccccccccccccccccc}
\hline
 &       $e(t=1)$                 && $\lambda_0=0.05$     && $\lambda_0/4$         && $\lambda_0/4^2$     &&  $\lambda_0/4^3$   &&  $\lambda_0/4^4$                \\
\hline
 &      $\varepsilon=1$           && \textbf{   4.4357e-01 }  &&         2.5201e-02 &&   1.5669e-03  &&  9.7883e-05  &&  6.1025e-06                          \\
 &     order                      && \textbf{$-$}  && 2.0688  &&  2.0038  &&  2.0003  &&  2.0018 \\
 \hline
 &      $\varepsilon/2$           &&    1.2110e-01 &&   \textbf{6.3782e-03}  &&  3.9454e-04 &&   2.4639e-05 &&   1.5361e-06                 \\
 &     order                      && $-$ &&    \textbf{2.1234} &&   2.0074  &&  2.0006  &&  2.0018 \\
 \hline
 &      $\varepsilon/2^2$         &&   7.6497e-01 &&   3.9335e-02 &&   \textbf{2.4323e-03} &&   1.5189e-04 &&   9.4695e-06                           \\
  &     order                     && $-$ && 2.1408  &&  \textbf{2.0077}  &&  2.0006  &&  2.0018\\
 \hline
 &      $\varepsilon/2^3$        &&3.0088 && 9.2594e-02  && 4.6984e-03 &&   \textbf{2.9035e-04}  && 1.8089e-05    \\
 &     order                      && $-$ && 2.5111   && 2.1503 &&   \textbf{2.0082}   && 2.0023\\
 \hline
 &      $\varepsilon/2^4$         &&   9.6359e-01  &&  1.1328&&   4.0355e-02 &&   2.2183e-03 &&    \textbf{1.3740e-04}                      \\
  &     order                     && $-$  && -0.1167   && 2.4055 &&   2.0926  &&  \textbf{2.0065} \\
 \hline
\end{tabular}
\label{tab2}
\end{table}

\begin{table}[htbp]\small
\caption{\label{tab:test} Long-time temporal errors for the oscillatory complex NSFSGE in 2D when $\alpha=1.2$.}
\centering
\begin{tabular}{ccccccccccccccccccccccccccccc}
\hline
 &       $e(t=1)$                 && $\lambda_0=0.05$     && $\lambda_0/4$         && $\lambda_0/4^2$     &&  $\lambda_0/4^3$   &&  $\lambda_0/4^4$                \\
\hline
 &      $\varepsilon=1$           && \textbf{ 4.4516e-01}  &&       2.5164e-02   &&   1.5646e-03  &&    9.7738e-05  &&    6.0935e-06                          \\
 &     order                      && \textbf{$-$}  &&  2.0724  &&  2.0038   && 2.0003  &&  2.0018 \\
 \hline
 &      $\varepsilon/2$           &&     1.1653e-01  && \textbf{5.9772e-03}  &&   3.6915e-04  &&   2.3051e-05  &&   1.4371e-06             \\
 &     order                      && $-$ &&  \textbf{2.1426} &&  2.0086   && 2.0006 &&   2.0018 \\
 \hline
 &      $\varepsilon/2^2$         &&     1.6066 &&   3.9512e-02 &&   \textbf{2.4438e-03}&&   1.5261e-04  &&  9.5146e-06                          \\
  &     order                     && $-$ &&    2.6728 &&   \textbf{2.0076}   && 2.0006  &&  2.0018\\
 \hline
 &      $\varepsilon/2^3$         &&      2.9835    &&   2.2866e-01     &&  4.8089e-03    &&  \textbf{ 2.9648e-04}    &&   1.8469e-05  \\
 &     order                      && $-$ &&  1.8528    && 2.7857    && \textbf{2.0098}    && 2.0024\\
 \hline
 &      $\varepsilon/2^4$         &&   9.7440e-01 &&  1.1455 && 4.6627e-02  && 2.2205e-03 &&  \textbf{1.3759e-04 }                       \\
  &     order                     && $-$ && -0.1167   &&  2.3094   &&  2.1961   &&  \textbf{2.0062} \\
 \hline
\end{tabular}
\label{tab3}
\end{table}

\subsection{Applications}
In this subsection,  we give some  applications  to demonstrate  the differences in the dynamic behaviors between the fractional sine-Gordon equation and the classical  sine-Gordon equation. Numerical results of the NSFSGE with different initial conditions are obtained by the TSFP method.

\subsubsection{Elliptical ring soliton in 2D}
We consider the elliptical ring solitons with the initial conditions as \cite{40}
\begin{equation}\label{fel1}
\begin{aligned}
& u_0(x, y)=4 \tan ^{-1}\left(\exp \left(3-\sqrt{\frac{(x-y)^2}{3}+\frac{(x+y)^2}{2}}\right)\right),\quad
& u_1(x, y)=0, \quad  -7\le x, y \le 7.
\end{aligned}
\end{equation}
Here, we choose $ N=256, \tau=10^{-3}$ to discrete the space and time domain, respectively. The numerical solutions of the NSFSGE in terms of $\sin(u/2)$ with different $\alpha$ at times $t=0, 3, 5, 9, 12, 15$  are given in Figures \ref{afig1}-\ref{afig3}.  We can observe a clearly elliptical ring soliton at $t=0$. Then there is a small disturbance  in the soliton  and the soliton starts  to shrink as the time changes, which can be seen at $t=3$ and  $t=5$ clearly.  From $t=9$, the soliton enters the expansion phase.  This expansion is continued until $t=12$, where the soliton is nearly  reformed. Finally, it seems to be in a shrinking phase again at $t = 15$. Note that when $\alpha = 2$, the nonlinear space fractional sine-Gordon equation reduces to the classical nonlinear sine-Gordon equation. The evolution trend of solitons in Figure \ref{afig1} is consistent with the results in References \cite{40, 41, 42, 43}. Furthermore,  we can find the shape of the soliton changes with fractional order $\alpha$ varying, and the shape of the soliton changes more dramatically with  $\alpha$ smaller.

\begin{figure*}[htbp]
\centering
\subfigure{
\begin{minipage}[t]{0.3\textwidth}
\centering
\includegraphics[width=5cm]{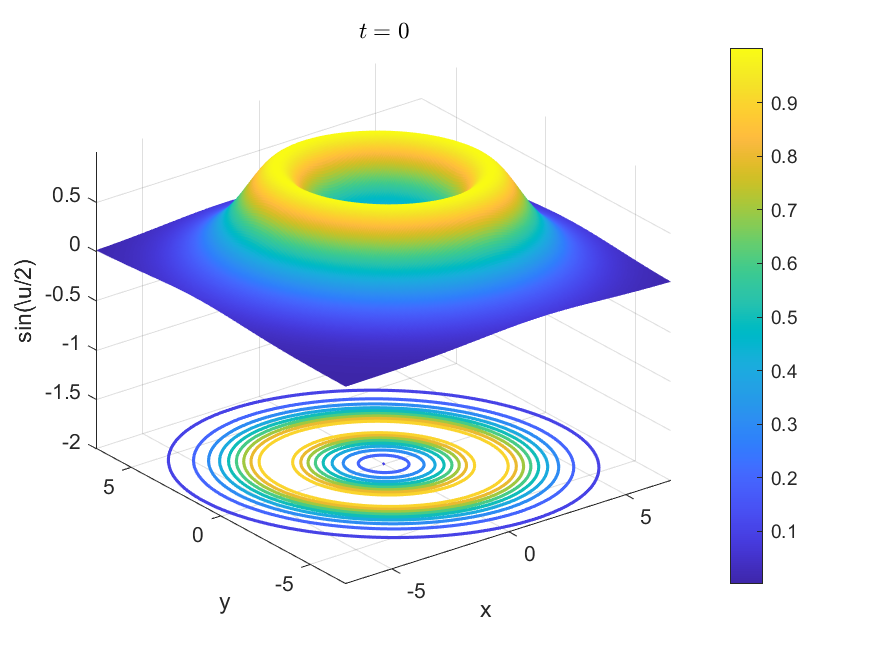}
\end{minipage}
}
\subfigure{
\begin{minipage}[t]{0.3\textwidth}
\centering
\includegraphics[width=5cm]{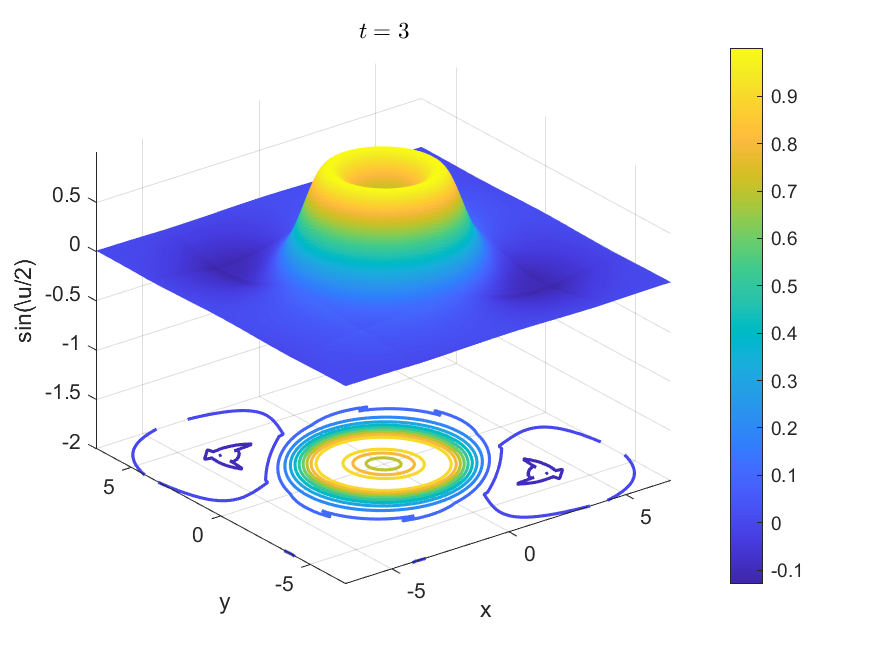}
\end{minipage}
}
\subfigure{
\begin{minipage}[t]{0.3\textwidth}
\centering
\includegraphics[width=5cm]{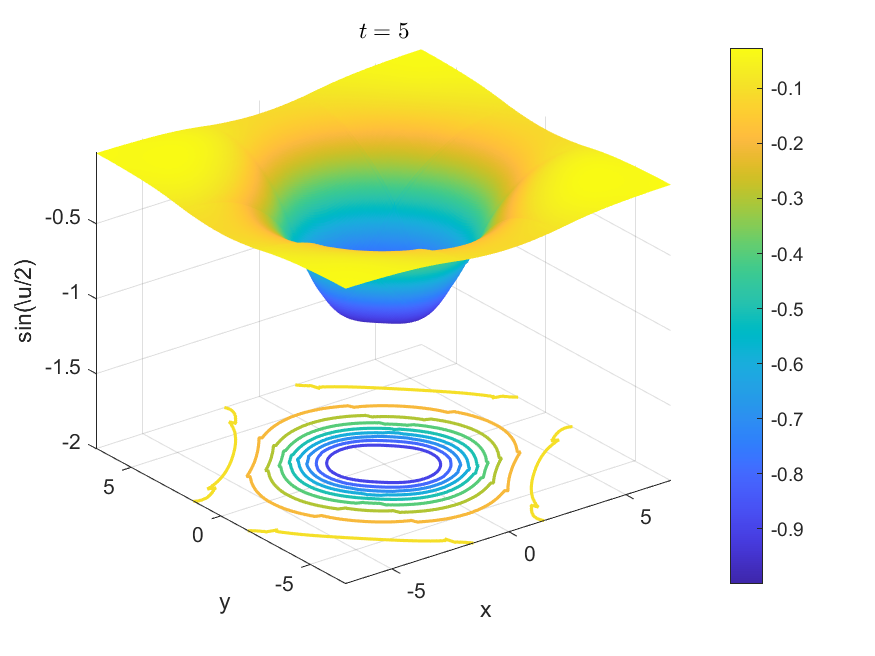}
\end{minipage}
}
\\
\subfigure{
\begin{minipage}[t]{0.3\textwidth}
\centering
\includegraphics[width=5cm]{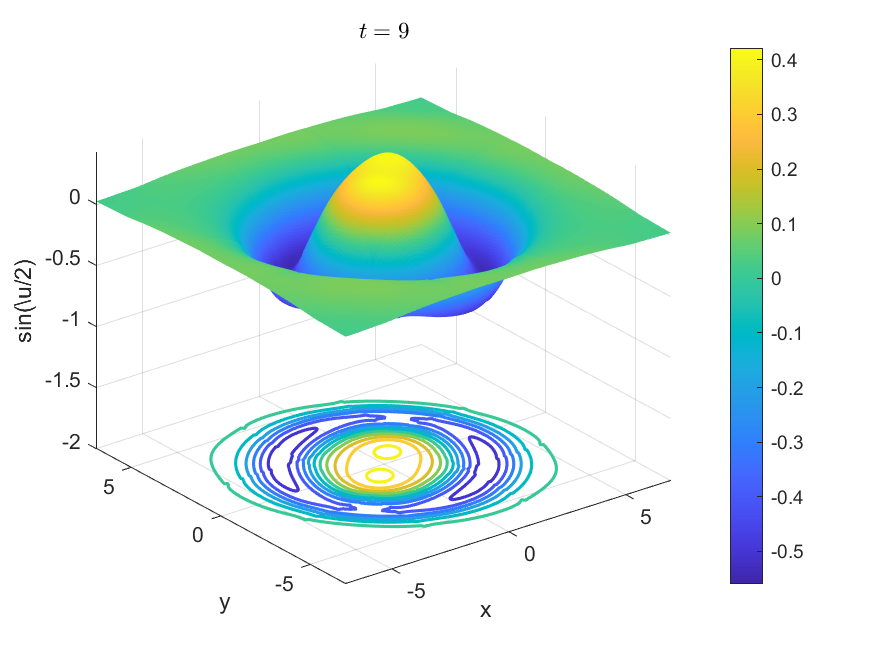}
\end{minipage}
}
\subfigure{
\begin{minipage}[t]{0.3\textwidth}
\centering
\includegraphics[width=5cm]{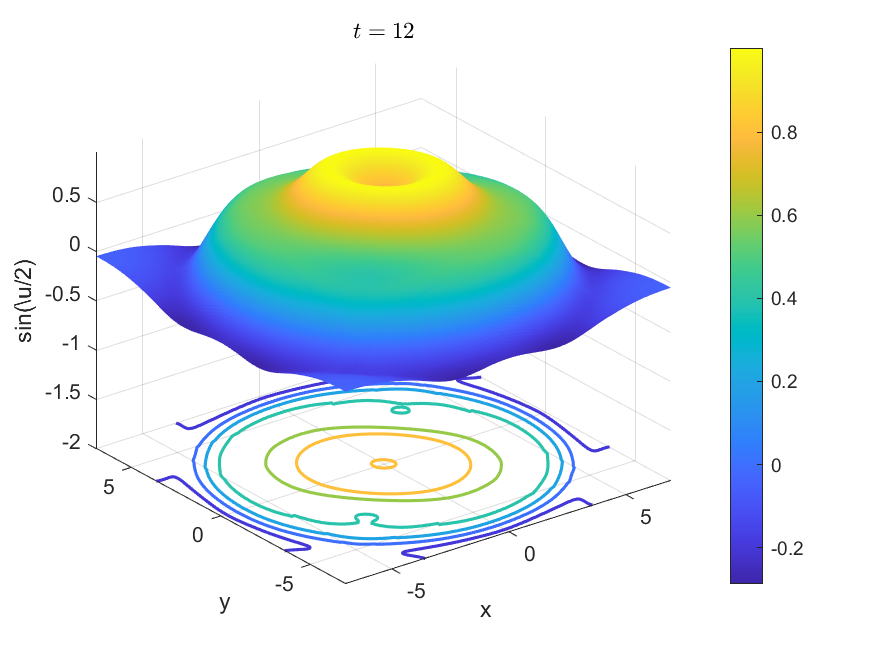}
\end{minipage}
}
\subfigure{
\begin{minipage}[t]{0.3\textwidth}
\centering
\includegraphics[width=5cm]{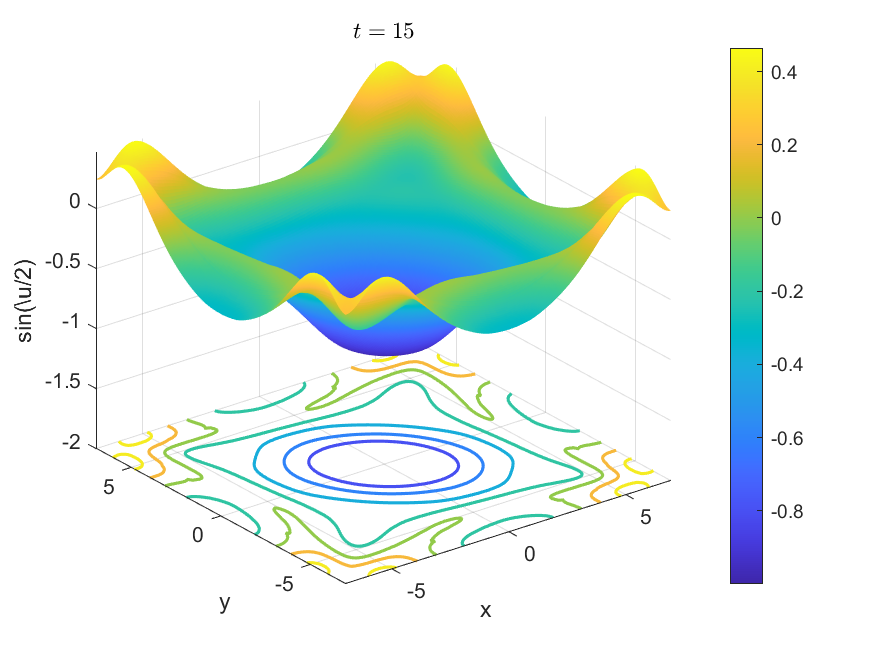}
\end{minipage}
}
\caption{Surfaces and contour plots of  the elliptical ring soliton when $\alpha=2$. }
\label{afig1}
\end{figure*}

\begin{figure*}[htbp]
\centering
\subfigure{
\begin{minipage}[t]{0.3\textwidth}
\centering
\includegraphics[width=5cm]{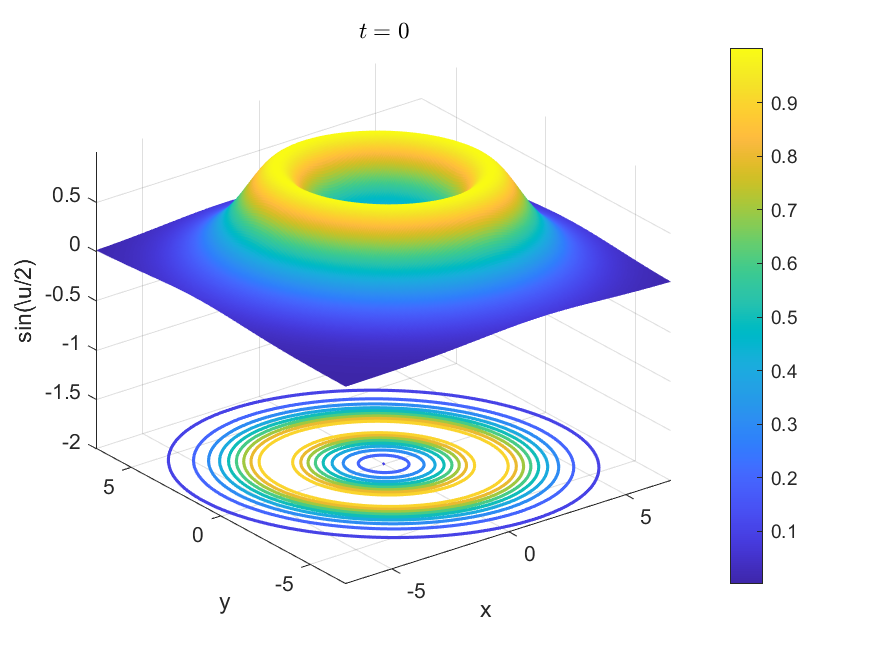}
\end{minipage}
}
\subfigure{
\begin{minipage}[t]{0.3\textwidth}
\centering
\includegraphics[width=5cm]{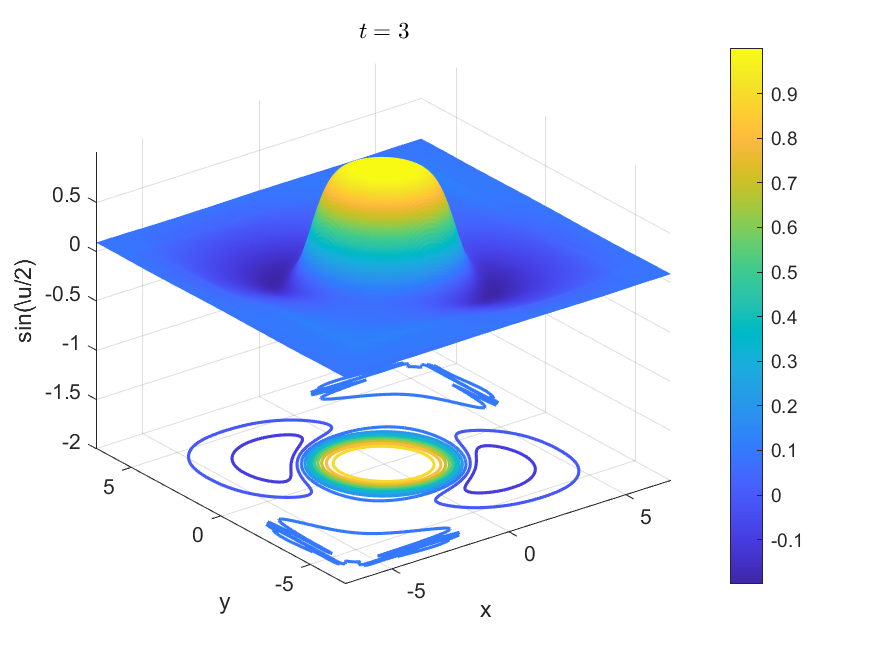}
\end{minipage}
}
\subfigure{
\begin{minipage}[t]{0.3\textwidth}
\centering
\includegraphics[width=5cm]{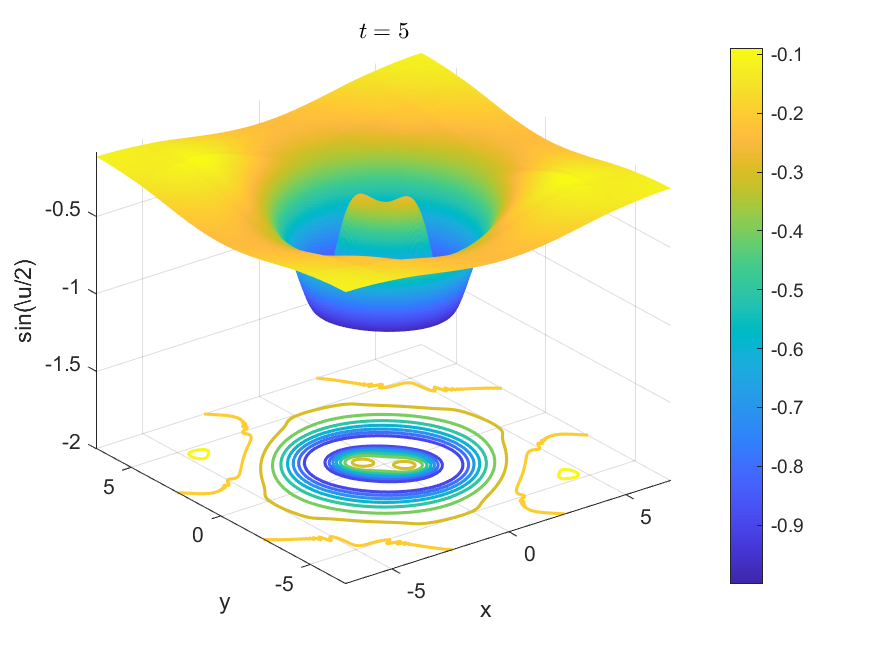}
\end{minipage}
}
\\
\subfigure{
\begin{minipage}[t]{0.3\textwidth}
\centering
\includegraphics[width=5cm]{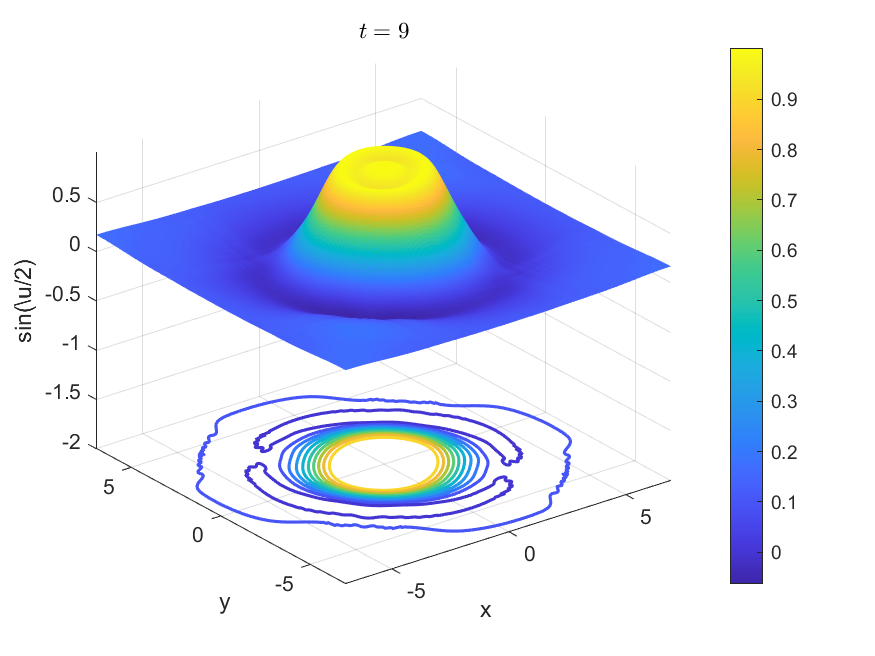}
\end{minipage}
}
\subfigure{
\begin{minipage}[t]{0.3\textwidth}
\centering
\includegraphics[width=5cm]{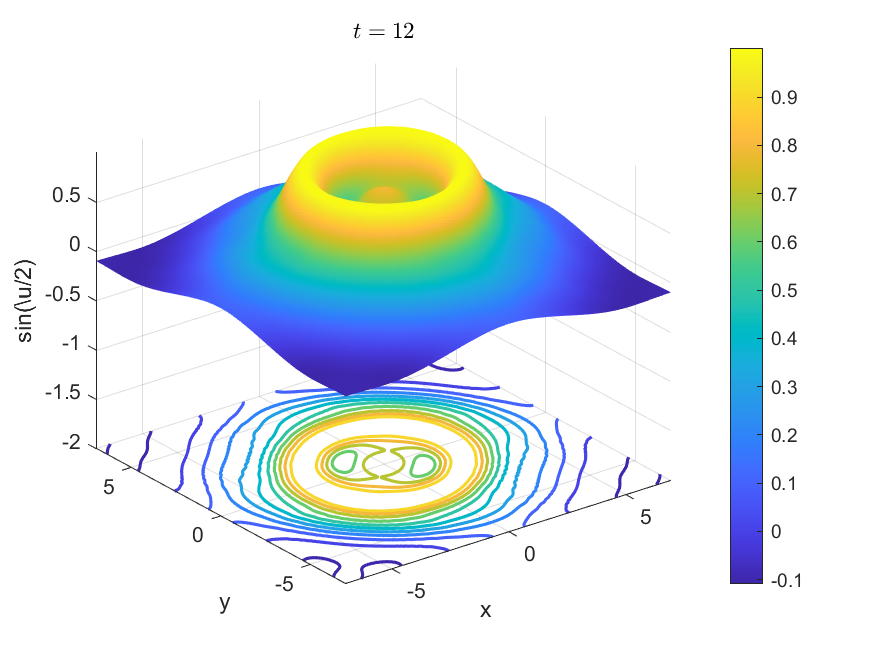}
\end{minipage}
}
\subfigure{
\begin{minipage}[t]{0.3\textwidth}
\centering
\includegraphics[width=5cm]{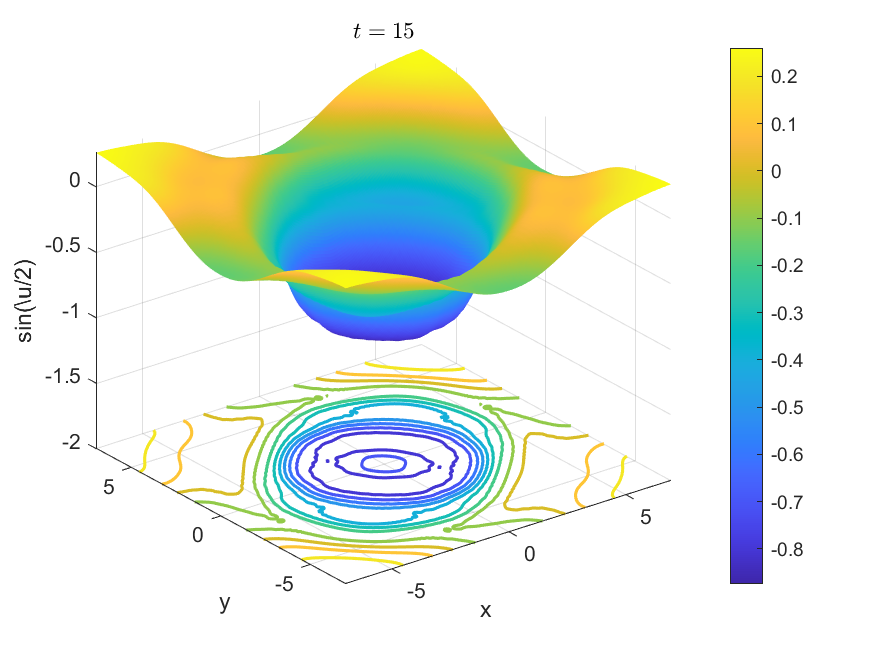}
\end{minipage}
}
\caption{ Surfaces and contour plots of  the elliptical ring soliton in 2D when $\alpha=1.5$. }
\label{afig2}
\end{figure*}

\begin{figure*}[htbp]
\centering
\subfigure{
\begin{minipage}[t]{0.3\textwidth}
\centering
\includegraphics[width=5cm]{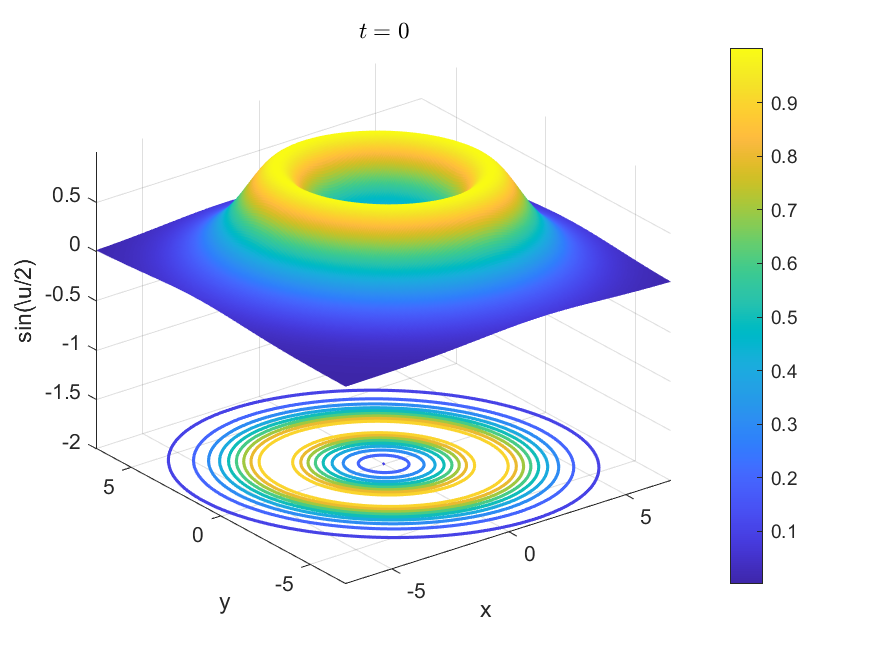}
\end{minipage}
}
\subfigure{
\begin{minipage}[t]{0.3\textwidth}
\centering
\includegraphics[width=5cm]{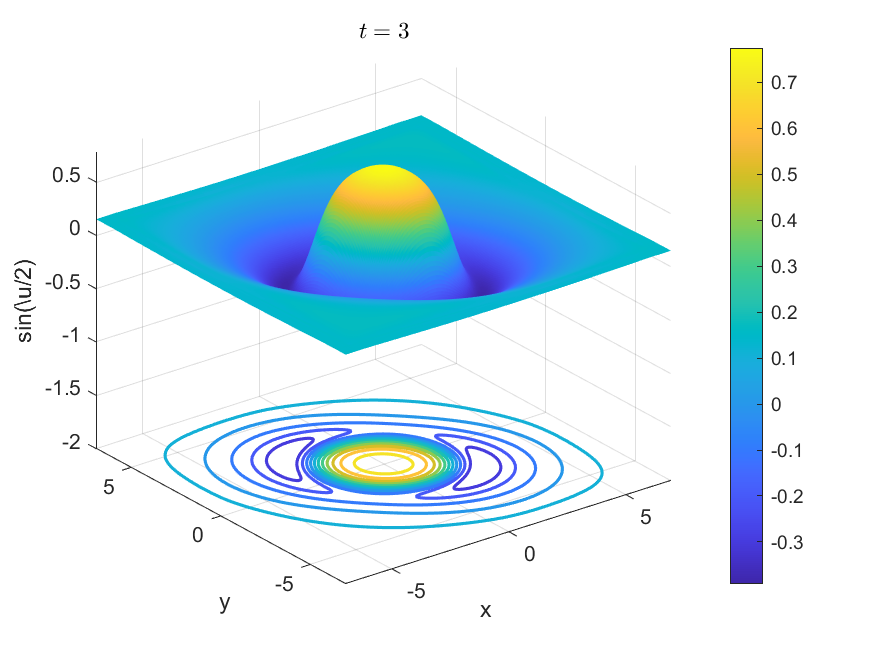}
\end{minipage}
}
\subfigure{
\begin{minipage}[t]{0.3\textwidth}
\centering
\includegraphics[width=5cm]{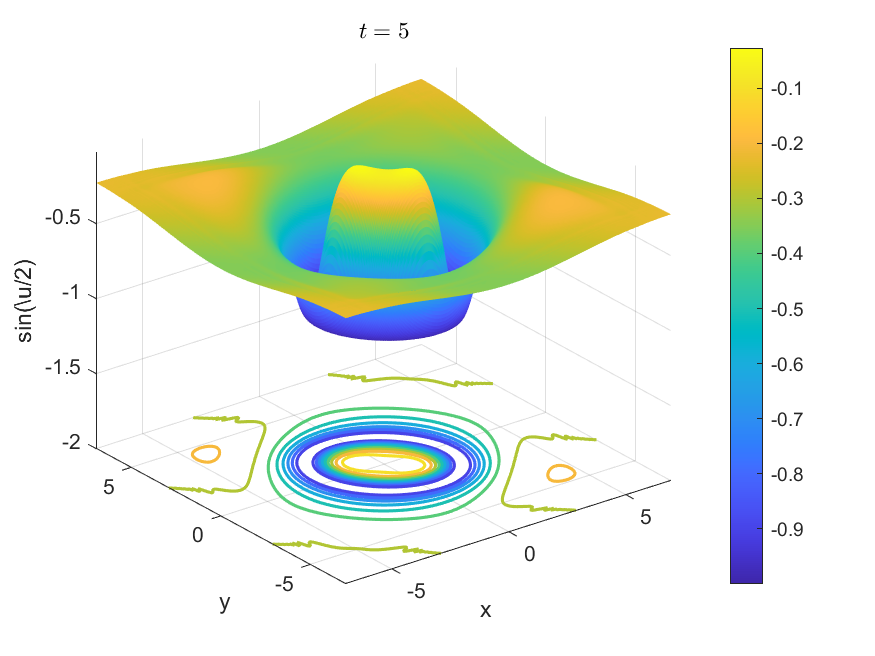}
\end{minipage}
}
\\
\subfigure{
\begin{minipage}[t]{0.3\textwidth}
\centering
\includegraphics[width=5cm]{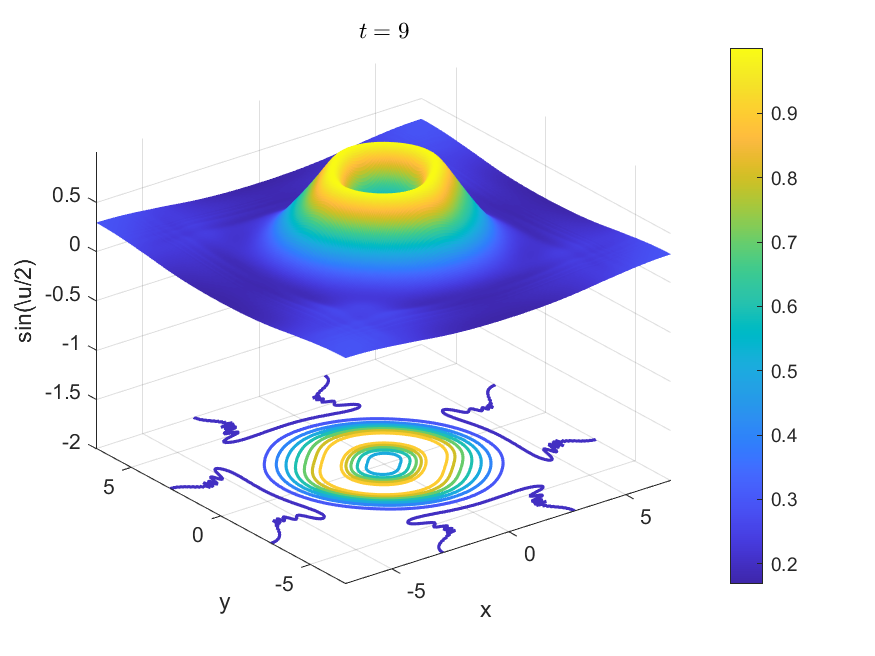}
\end{minipage}
}
\subfigure{
\begin{minipage}[t]{0.3\textwidth}
\centering
\includegraphics[width=5cm]{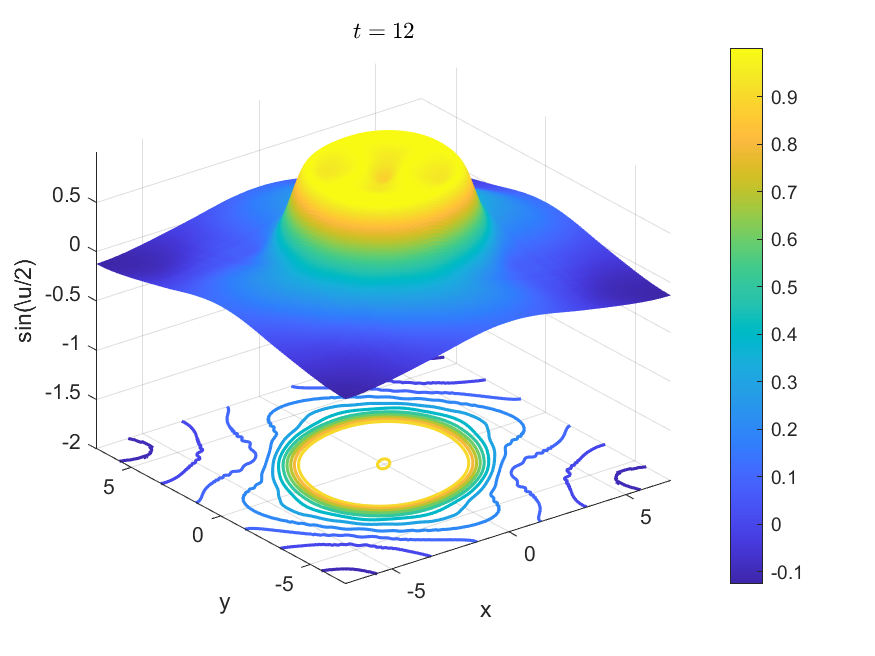}
\end{minipage}
}
\subfigure{
\begin{minipage}[t]{0.3\textwidth}
\centering
\includegraphics[width=5cm]{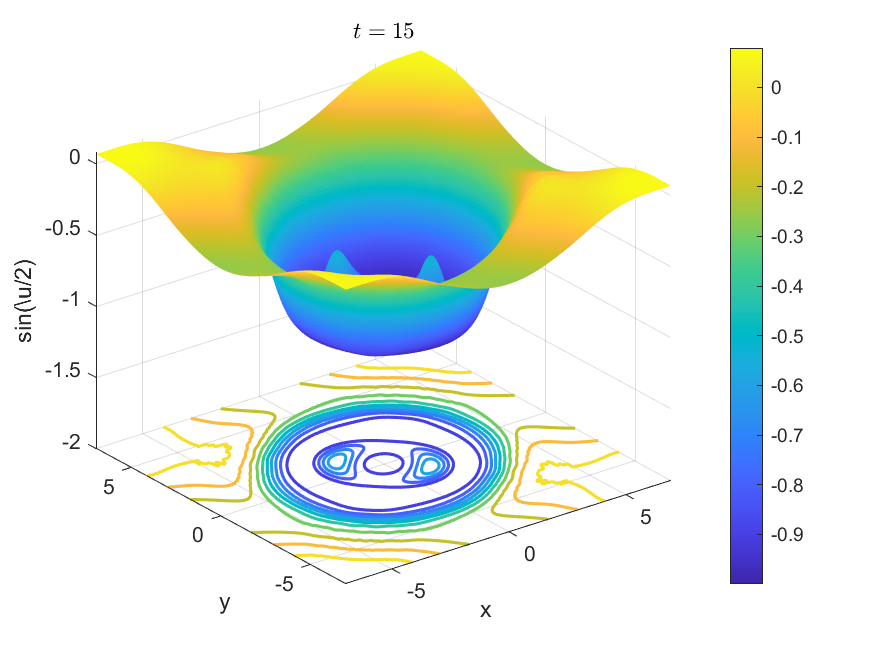}
\end{minipage}
}
\caption{ Surfaces and contour plots of elliptical ring soliton in 2D when $\alpha=1.2$. }
\label{afig3}
\end{figure*}
\subsubsection{Collisions of two circular solutions in 2D}
We consider the circular ring solitons with the initial conditions as \cite{44}
\begin{equation}
\begin{split}
u_0(x, y)&=4 \tan ^{-1}\left[\exp \left(\frac{4-\sqrt{(x+3)^2+(y+7)^2}}{0.436}\right)\right],\quad -30 \leq x \leq 10,\quad -21 \leq y \leq 7, \\
u_1(x, y)&=4.13 \operatorname{sech}\left(\frac{4-\sqrt{(x+3)^2+(y+7)^2}}{0.436}\right),\quad -30 \leq x \leq 10,\quad -21 \leq y \leq 7.
\end{split}
\end{equation}
%\begin{equation}\label{fel2}
%\begin{split}
%z_0(x, y) & =4 \tan ^{-1}\left[\exp \left(\frac{4-\sqrt{(x+3)^2+(y+7)^2}}{0.436}\right)\right],-30 \leq x, y \leq 10 \\
%z_1(x, y) & =\frac{4.13}{\cosh \left(\frac{4-\sqrt{(x+3)^2+(y+7)^2}}{0.436}\right)},-30 \leq x, y \leq 10 .
%\end{split}
%\end{equation}
We choose $ N=256, \tau=10^{-3}$ to discrete the space and time domain. According to the symmetry properties of the problem, the solutions including the extension across $x=-10$ and $y=-7$ are provided.  The surfaces and contour plots of two expanding circular ring solitons for different $\alpha$ at time $t=0, 2, 4, 6, 8, 10$ are shown in Figures \ref{afig4}-\ref{afig6}. From the contour plots, we can clearly observe the collision between two expanding circular ring solitons, where two smaller ring solitons emerge into a large ring soliton for different $\alpha$.  Moreover, the smaller the fractional order $\alpha$, the stronger the shape change of the two solitons. The evolution trend of two circular ring solitons at $\alpha=2$ is  consistent with the References \cite{ 44, 45, 46, 47}, which verifies the validity of our numerical method.
\begin{figure*}[htbp]
\centering
\subfigure{
\begin{minipage}[t]{0.3\textwidth}
\centering
\includegraphics[width=5cm]{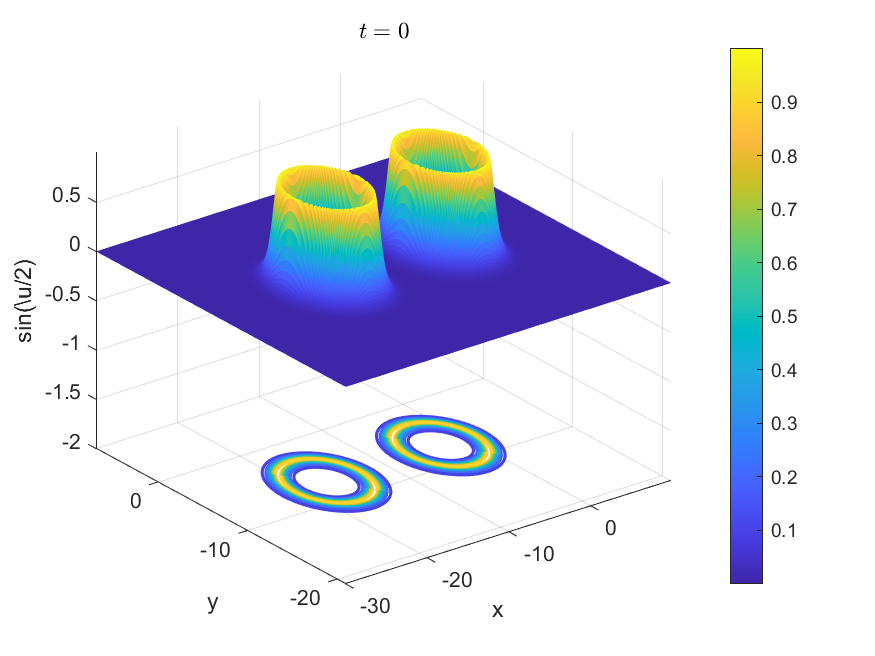}
\end{minipage}
}
\subfigure{
\begin{minipage}[t]{0.3\textwidth}
\centering
\includegraphics[width=5cm]{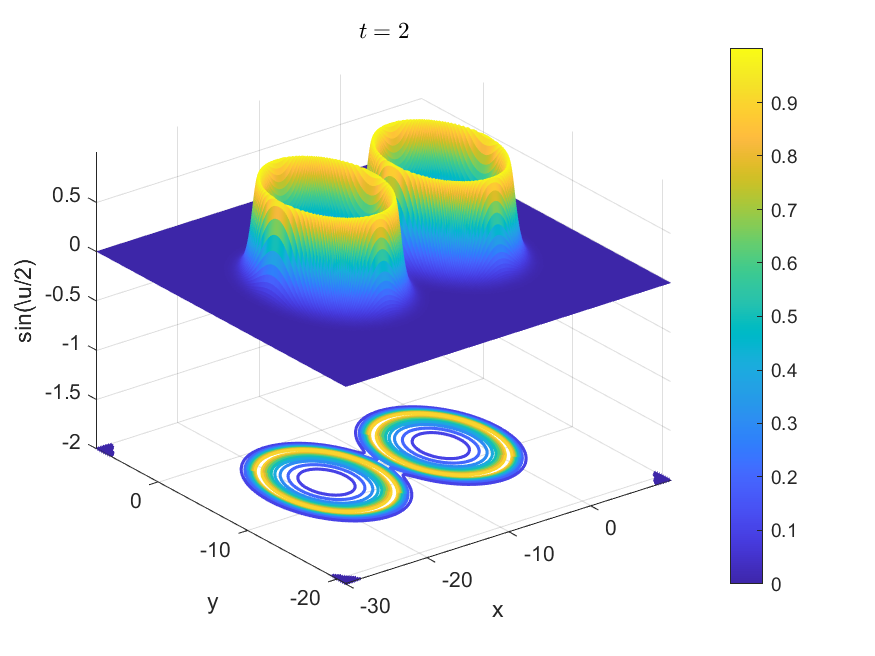}
\end{minipage}
}
\subfigure{
\begin{minipage}[t]{0.3\textwidth}
\centering
\includegraphics[width=5cm]{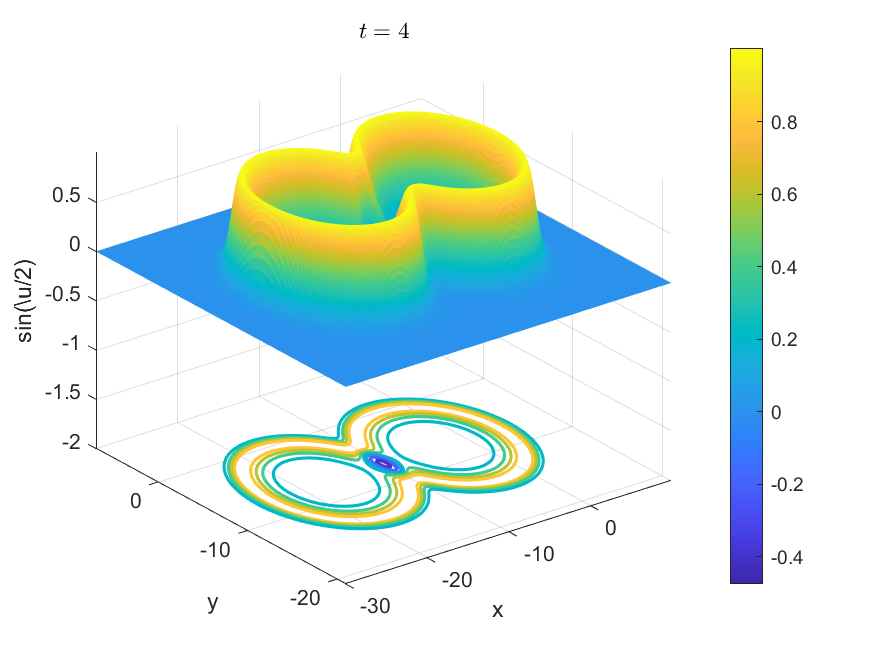}
\end{minipage}
}
\\
\subfigure{
\begin{minipage}[t]{0.3\textwidth}
\centering
\includegraphics[width=5cm]{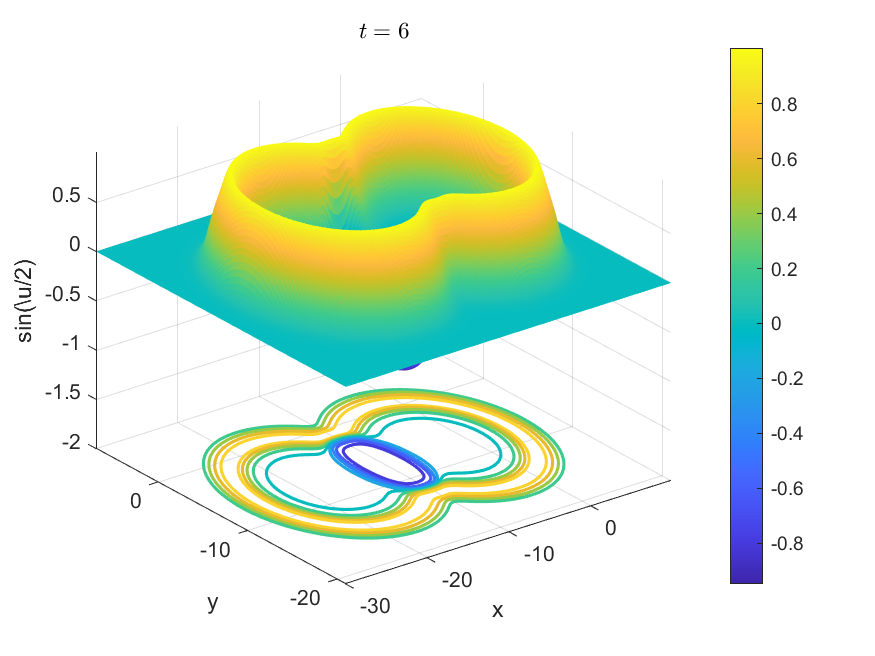}
\end{minipage}
}
\subfigure{
\begin{minipage}[t]{0.3\textwidth}
\centering
\includegraphics[width=5cm]{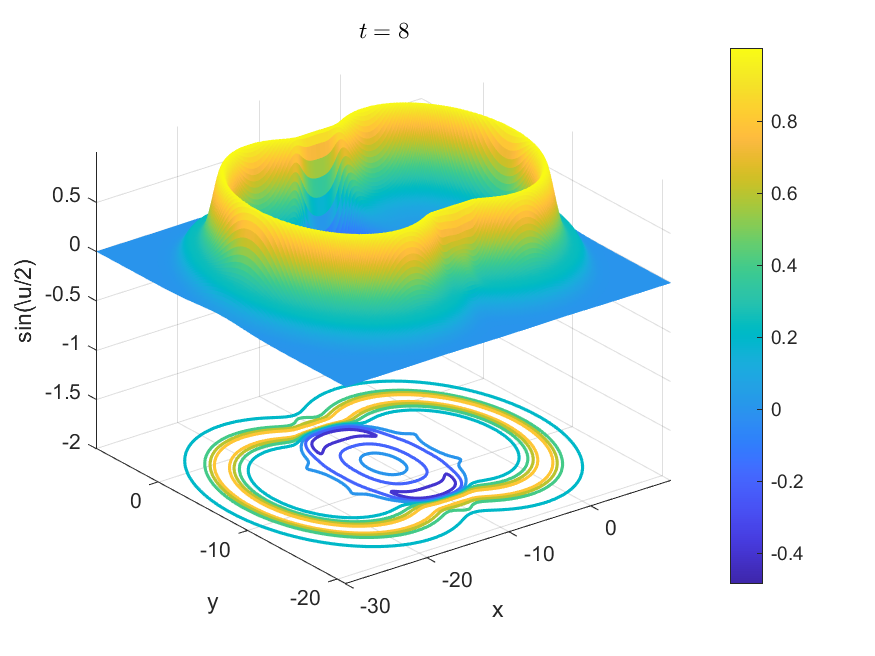}
\end{minipage}
}
\subfigure{
\begin{minipage}[t]{0.3\textwidth}
\centering
\includegraphics[width=5cm]{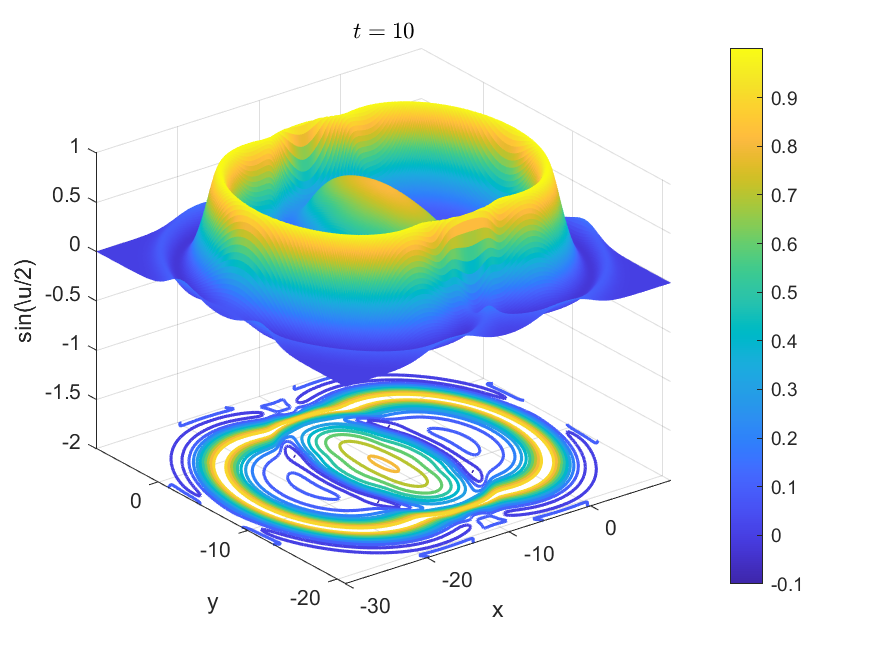}
\end{minipage}
}
\caption{ Surfaces and contour plots of the collisions of two circular ring soliton when $\alpha=2$. }
\label{afig4}
\end{figure*}

\begin{figure*}[htbp]
\centering
\subfigure{
\begin{minipage}[t]{0.3\textwidth}
\centering
\includegraphics[width=5cm]{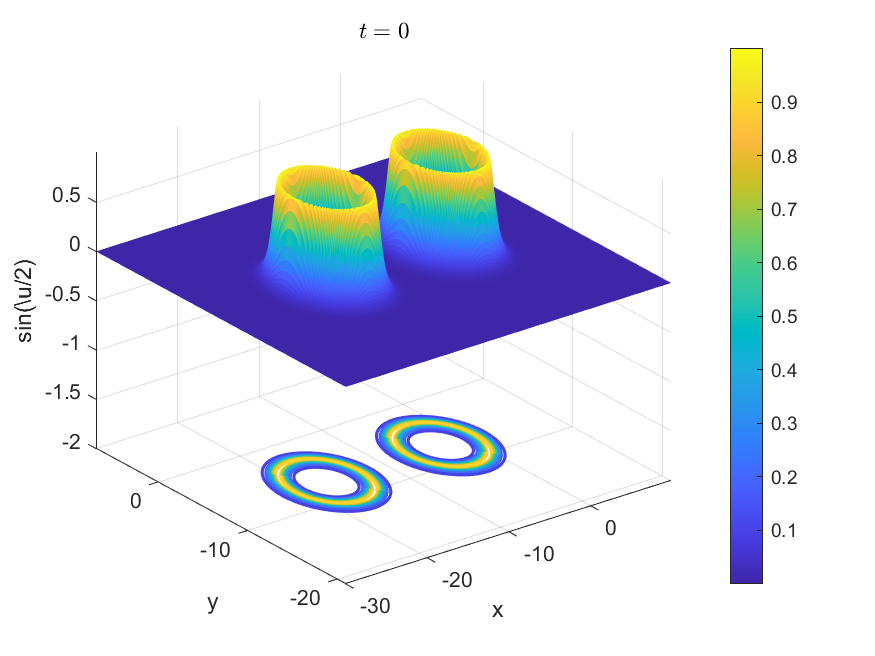}
\end{minipage}
}
\subfigure{
\begin{minipage}[t]{0.3\textwidth}
\centering
\includegraphics[width=5cm]{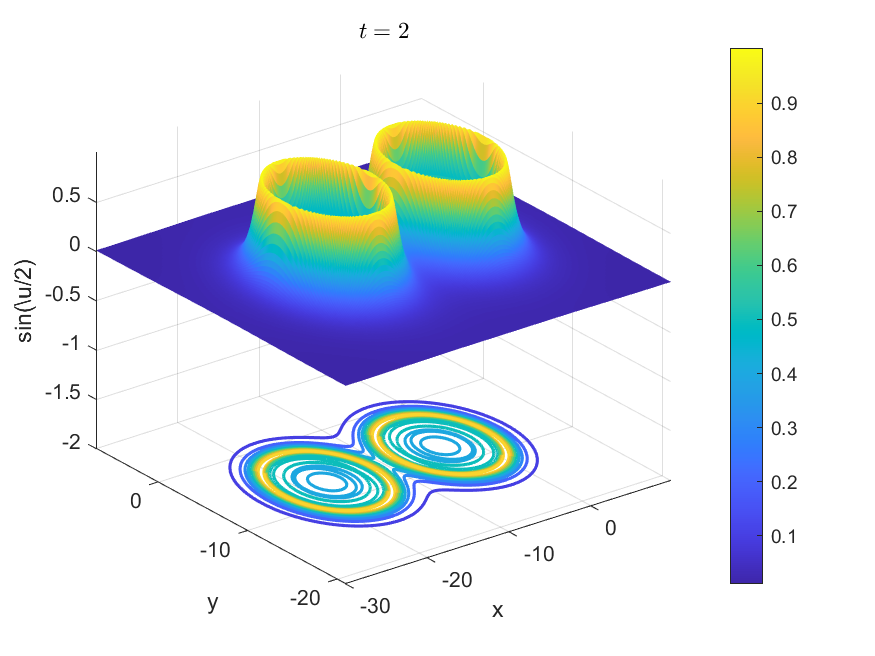}
\end{minipage}
}
\subfigure{
\begin{minipage}[t]{0.3\textwidth}
\centering
\includegraphics[width=5cm]{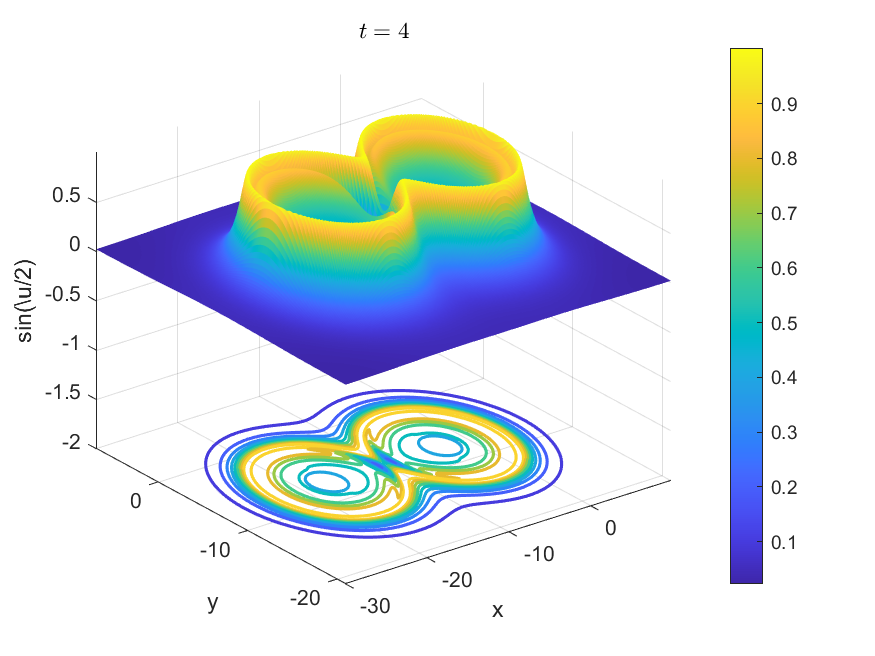}
\end{minipage}
}
\\
\subfigure{
\begin{minipage}[t]{0.3\textwidth}
\centering
\includegraphics[width=5cm]{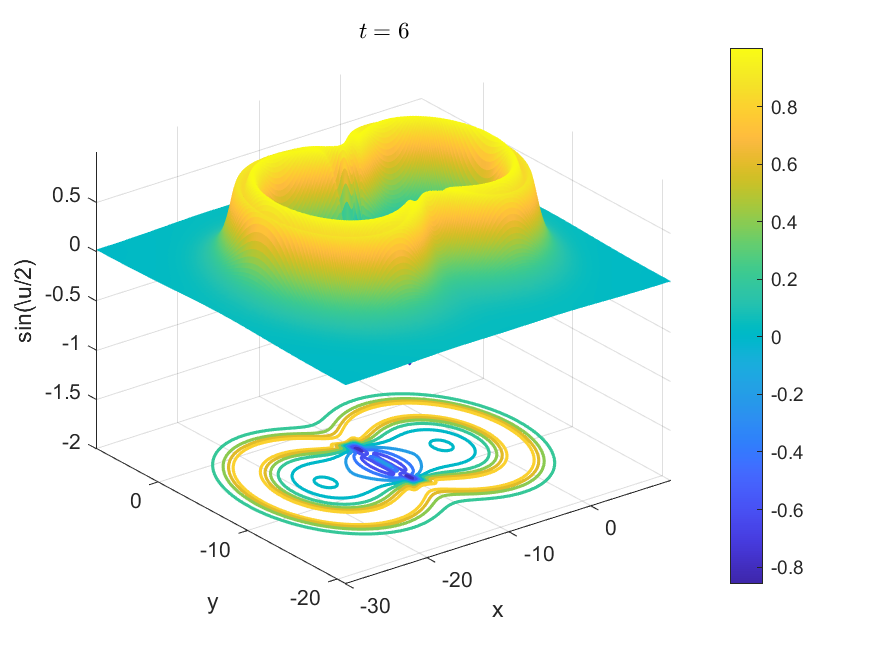}
\end{minipage}
}
\subfigure{
\begin{minipage}[t]{0.3\textwidth}
\centering
\includegraphics[width=5cm]{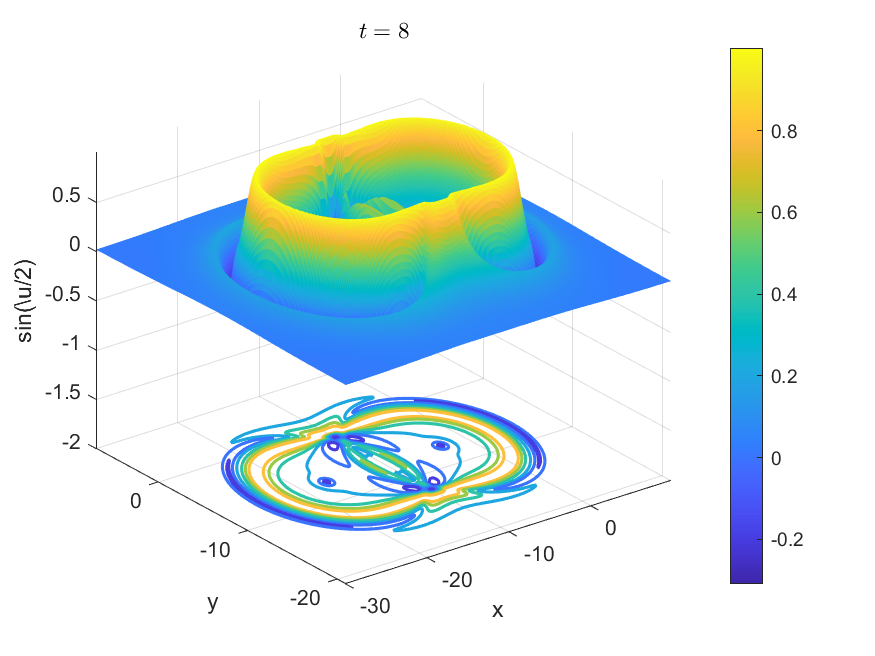}
\end{minipage}
}
\subfigure{
\begin{minipage}[t]{0.3\textwidth}
\centering
\includegraphics[width=5cm]{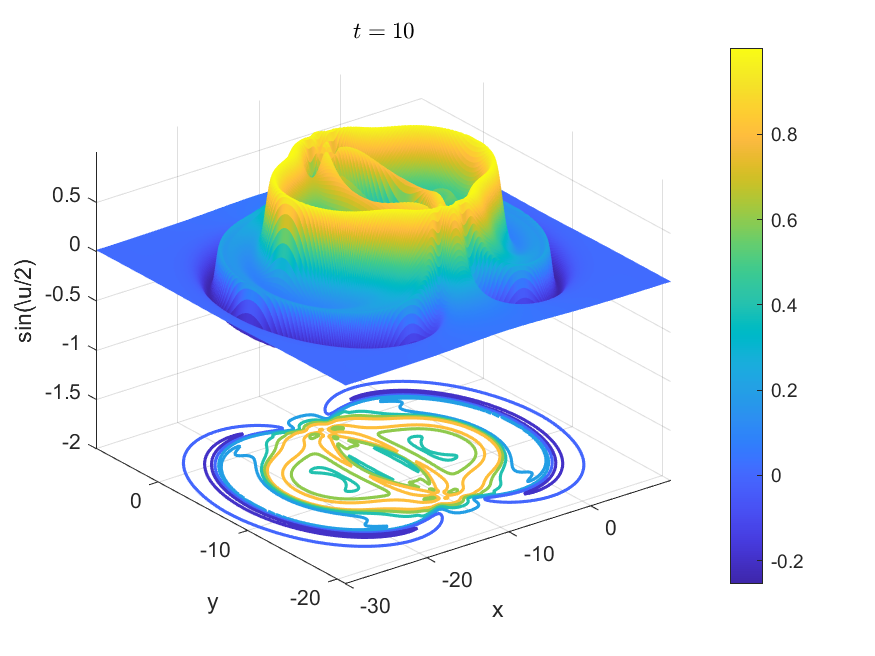}
\end{minipage}
}
\caption{ Surfaces and contour plots of the collisions of two circular ring soliton  when $\alpha=1.5$. }
\label{afig5}
\end{figure*}

\begin{figure*}[htbp]
\centering
\subfigure{
\begin{minipage}[t]{0.3\textwidth}
\centering
\includegraphics[width=5cm]{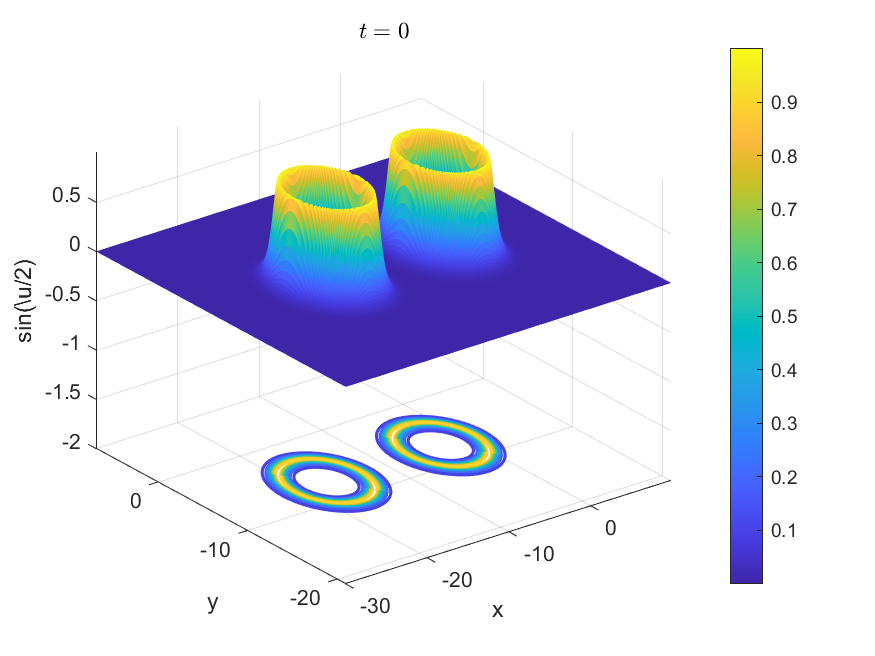}
\end{minipage}
}
\subfigure{
\begin{minipage}[t]{0.3\textwidth}
\centering
\includegraphics[width=5cm]{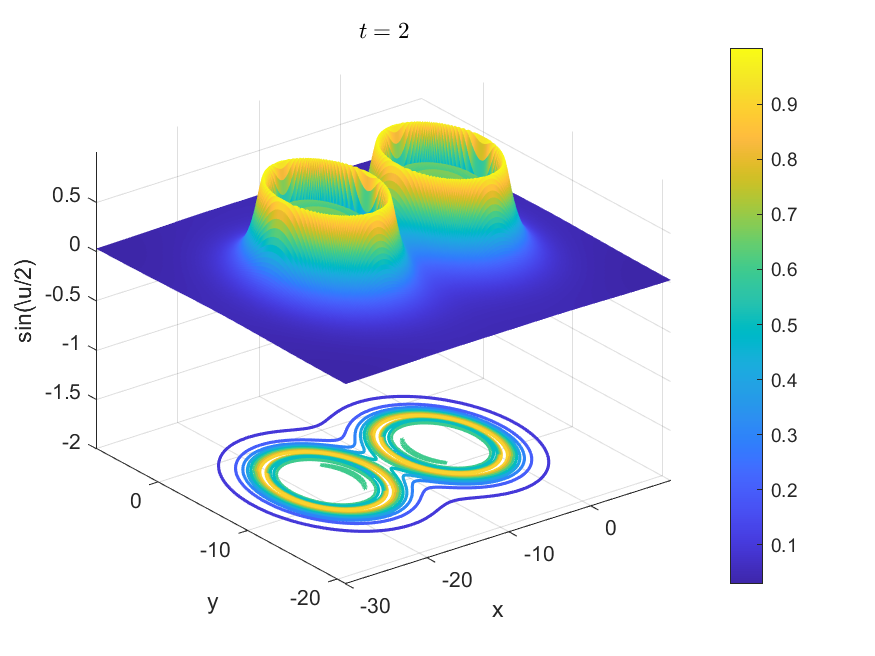}
\end{minipage}
}
\subfigure{
\begin{minipage}[t]{0.3\textwidth}
\centering
\includegraphics[width=5cm]{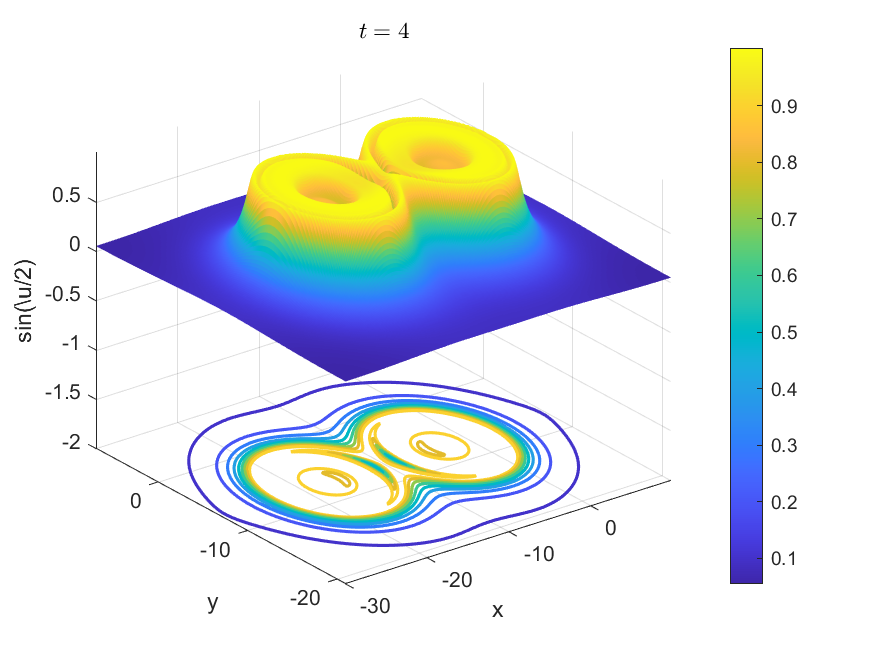}
\end{minipage}
}
\\
\subfigure{
\begin{minipage}[t]{0.3\textwidth}
\centering
\includegraphics[width=5cm]{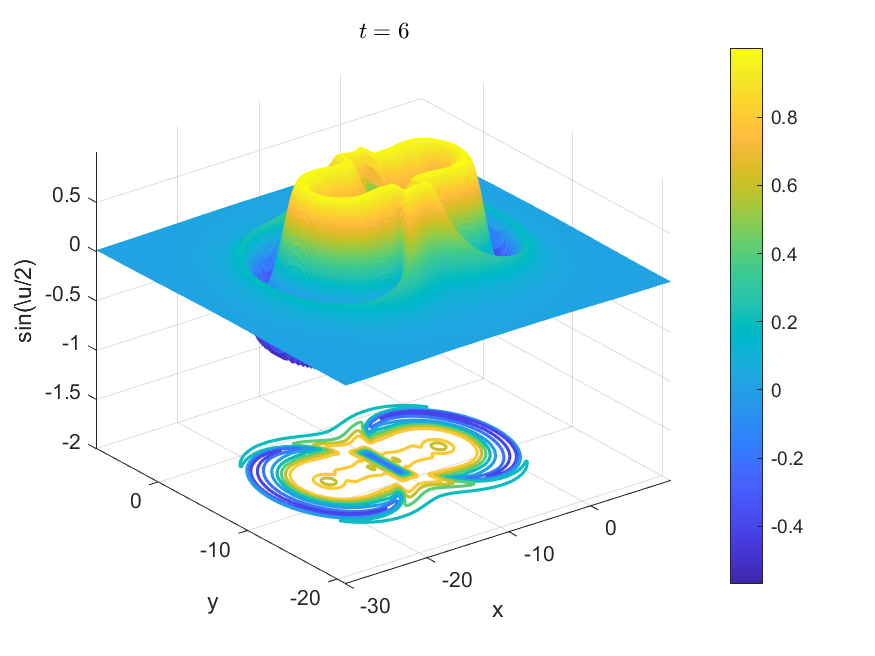}
\end{minipage}
}
\subfigure{
\begin{minipage}[t]{0.3\textwidth}
\centering
\includegraphics[width=5cm]{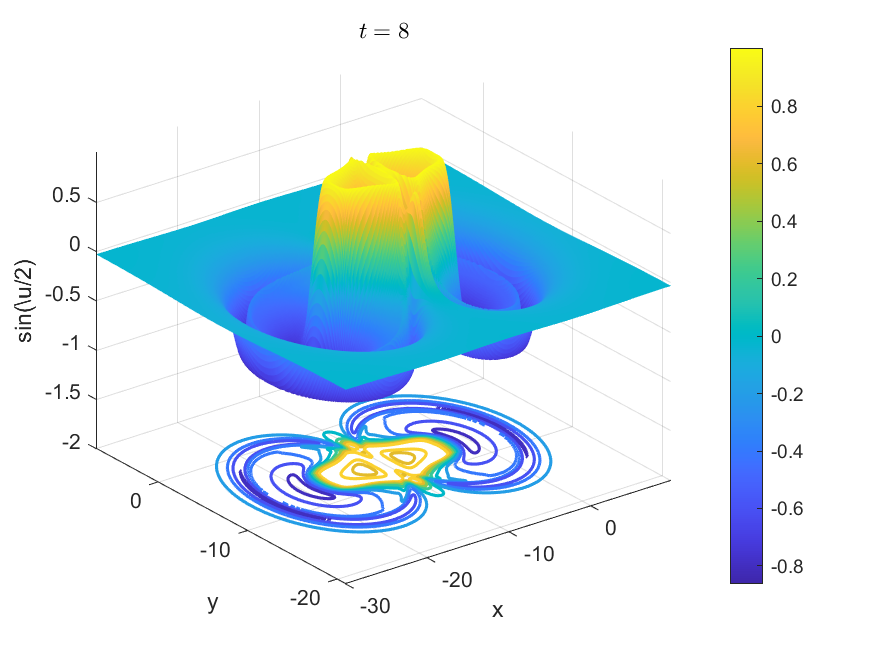}
\end{minipage}
}
\subfigure{
\begin{minipage}[t]{0.3\textwidth}
\centering
\includegraphics[width=5cm]{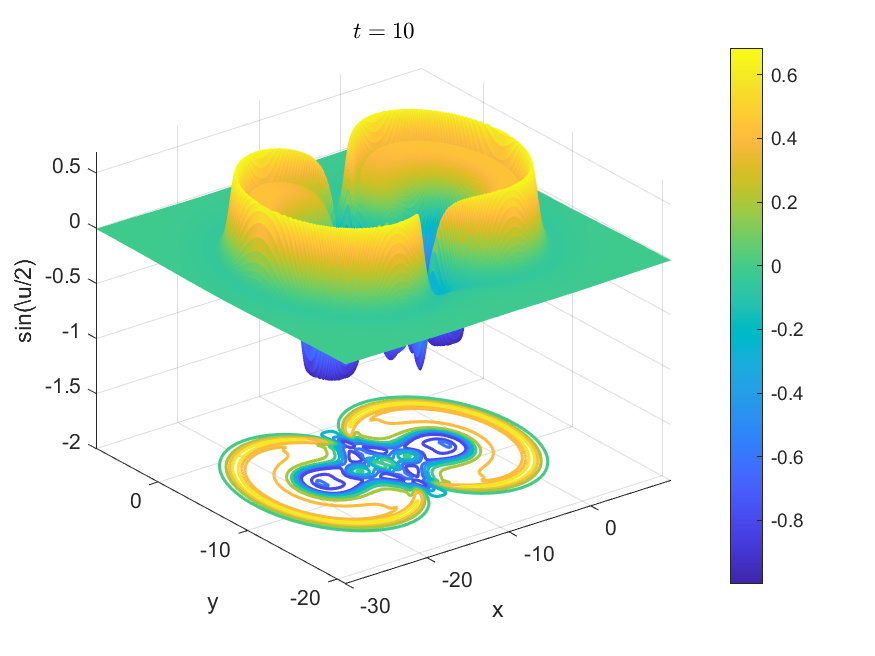}
\end{minipage}
}
\caption{ Surfaces and contour plots of the collisions of two circular ring soliton  when $\alpha=1.2$. }
\label{afig6}
\end{figure*}
%\subsection{Numerical results for 3D problem}
\subsubsection{Collisions of two circular solutions in 3D}
We consider the circular solitons in 3D with the initial conditions as
\begin{equation}
\begin{split}
u_0(x, y, z)&=4 \tan ^{-1}\left[\exp \left(\frac{4-\sqrt{(x+3)^2+(y+7)^2+(z+7)^2}}{0.436}\right)\right],\quad -30 \leq x \leq 10,\quad -21 \leq y, z \leq 7, \\
u_1(x, y, z)&=4.13 \operatorname{sech}\left(\frac{4-\sqrt{(x+3)^2+(y+7)^2+(z+7)^2}}{0.436}\right),\quad -30 \leq x \leq 10,\quad -21 \leq y, z  \leq 7.
\end{split}
\end{equation}
The step sizes $N=64, \tau=10^{-3}$ are taken to discrete the space and time domain. The extension over $x=-10$, $y=-7$ and $z=-7$  is included in the solution by the symmetry property of the problem.  The isosurfaces of two expanding circular ring solitons in 3D for different $\alpha$ at time $t=0, 2, 3, 4, 5, 6$ are exhibited in Figures \ref{afig7}-\ref{afig9}. It is clearly  shown the collision between two expanding circular ring solitons, where two smaller ring solitons emerge into a large ring soliton and then it again separates into two small circular solitons.  We also can find the shapes of the two solitons change rapidly as the value of the fractional order $\alpha$ decreases. Thus the entire process  for the  collision between two expanding circular ring solitons is consistent with the 2D case.
\begin{figure*}[htbp]
\centering
\subfigure{
\begin{minipage}[t]{0.3\textwidth}
\centering
\includegraphics[width=5cm]{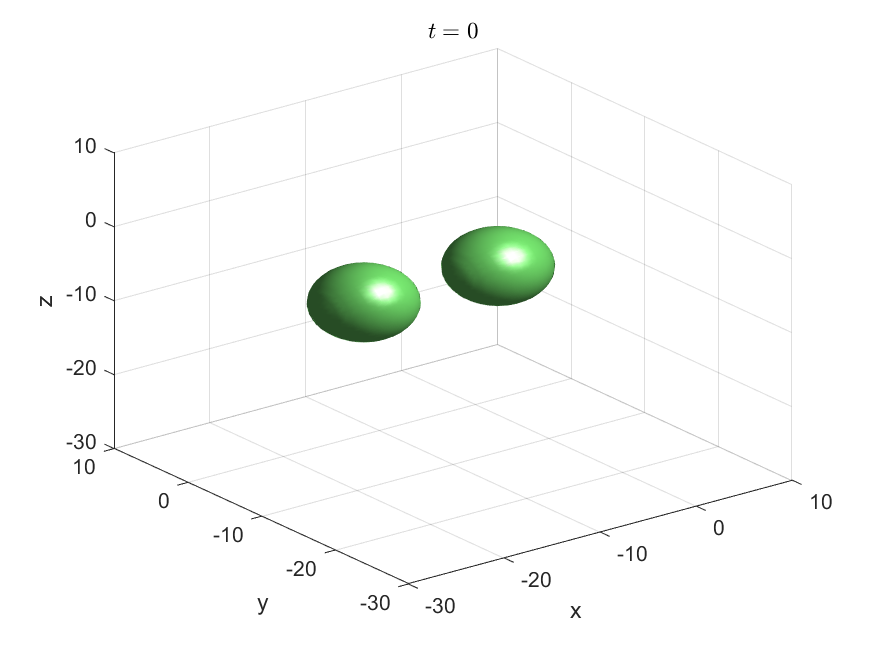}
\end{minipage}
}
\subfigure{
\begin{minipage}[t]{0.3\textwidth}
\centering
\includegraphics[width=5cm]{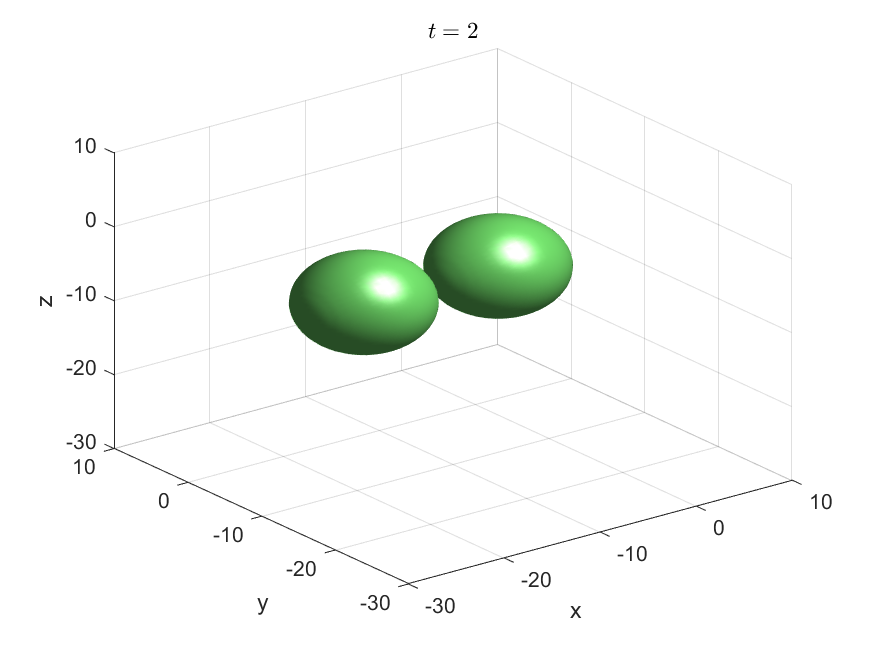}
\end{minipage}
}
\subfigure{
\begin{minipage}[t]{0.3\textwidth}
\centering
\includegraphics[width=5cm]{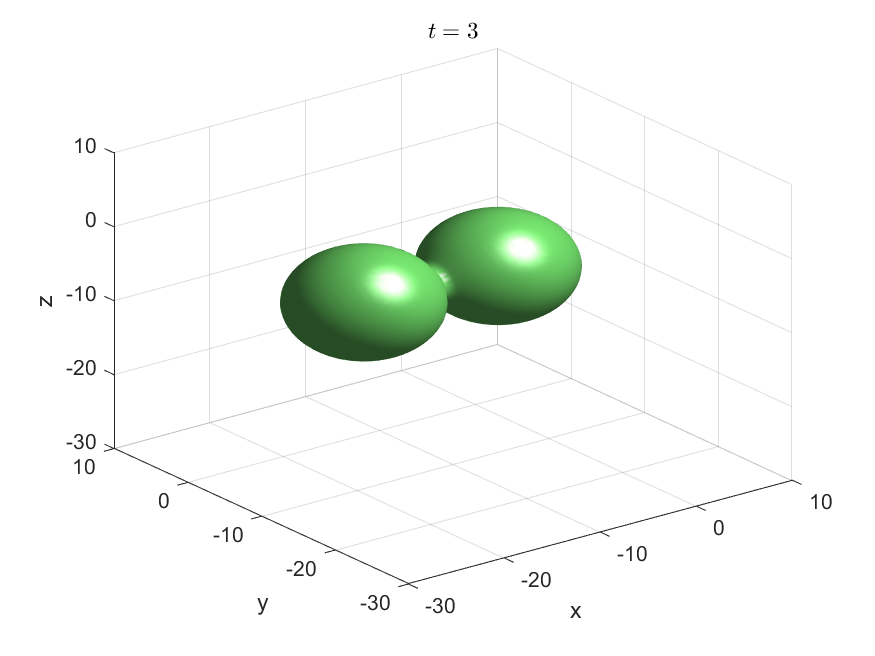}
\end{minipage}
}
\\
\subfigure{
\begin{minipage}[t]{0.3\textwidth}
\centering
\includegraphics[width=5cm]{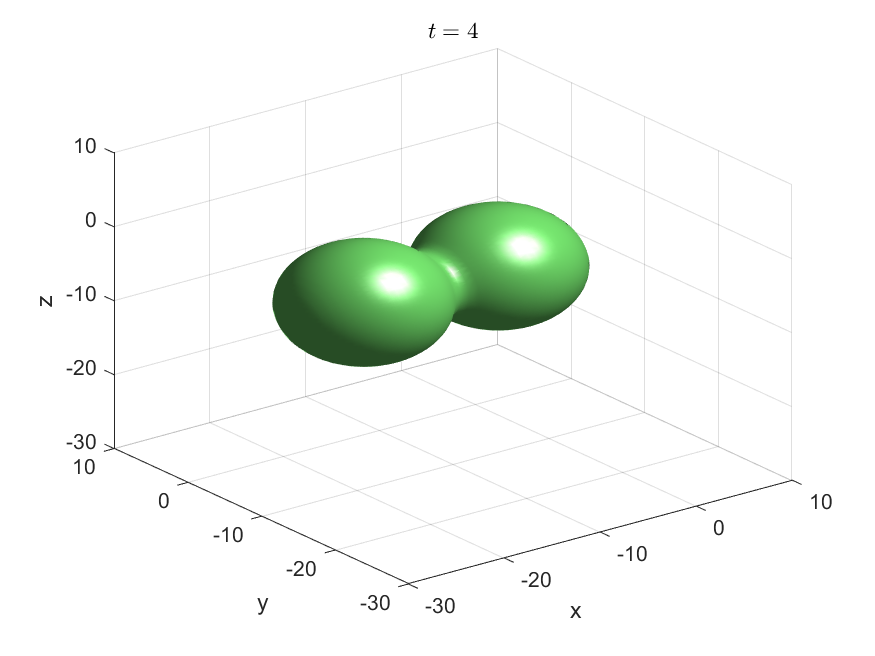}
\end{minipage}
}
\subfigure{
\begin{minipage}[t]{0.3\textwidth}
\centering
\includegraphics[width=5cm]{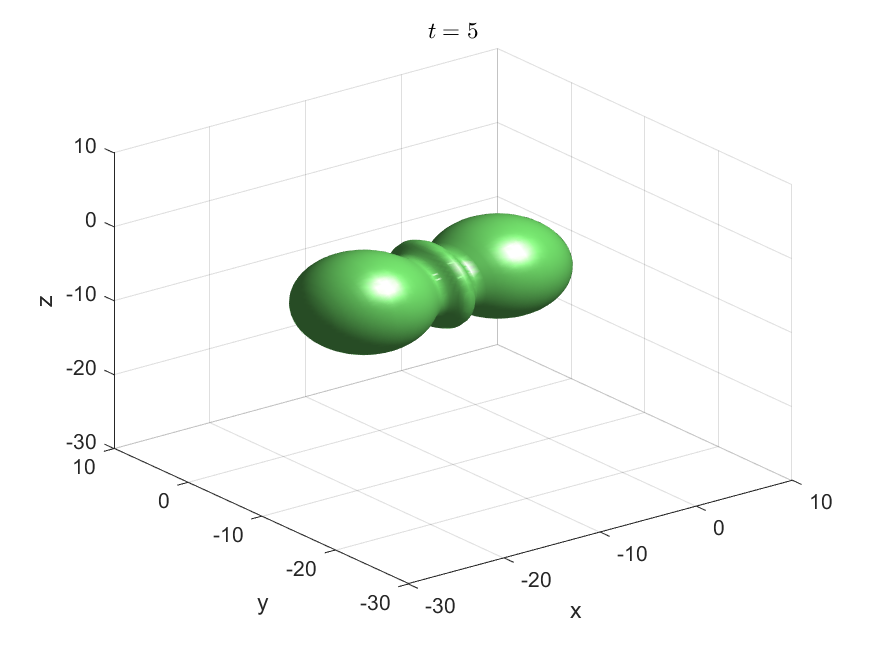}
\end{minipage}
}
\subfigure{
\begin{minipage}[t]{0.3\textwidth}
\centering
\includegraphics[width=5cm]{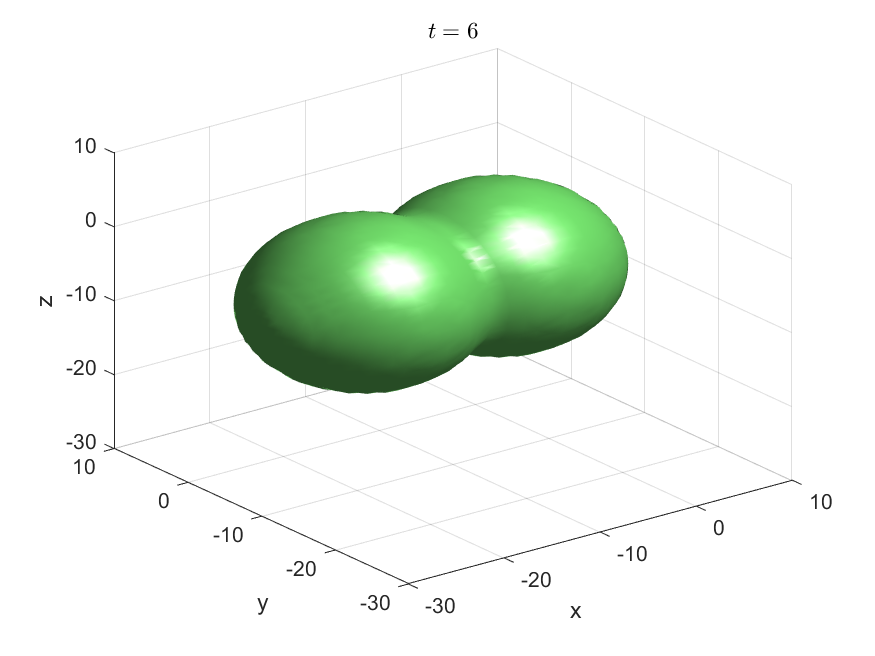}
\end{minipage}
}
\caption{Isosurfaces for the collisions of two circular solitons in 3D when $\alpha=2$. }
\label{afig7}
\end{figure*}
\begin{figure*}[htbp]
\centering
\subfigure{
\begin{minipage}[t]{0.3\textwidth}
\centering
\includegraphics[width=5cm]{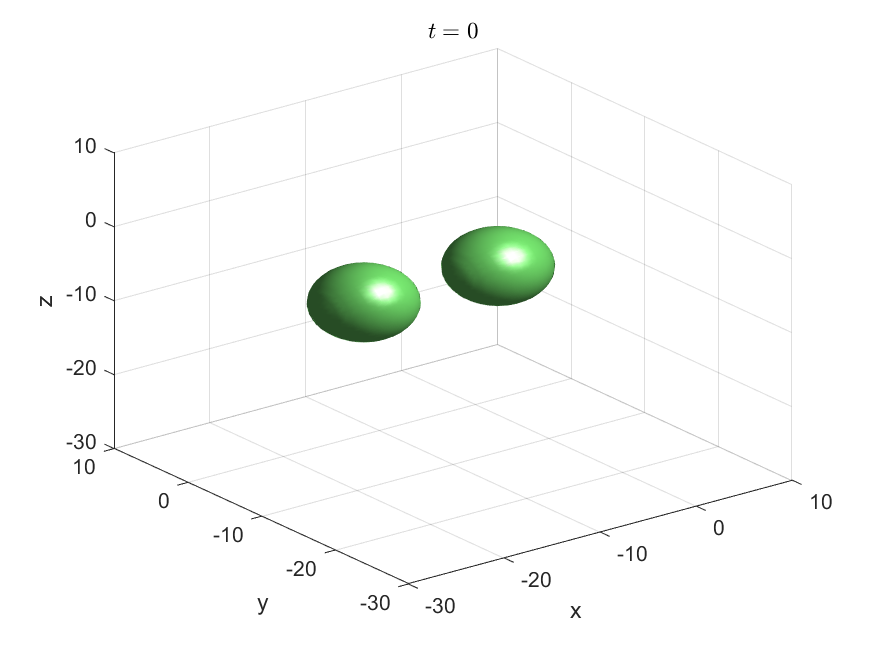}
\end{minipage}
}
\subfigure{
\begin{minipage}[t]{0.3\textwidth}
\centering
\includegraphics[width=5cm]{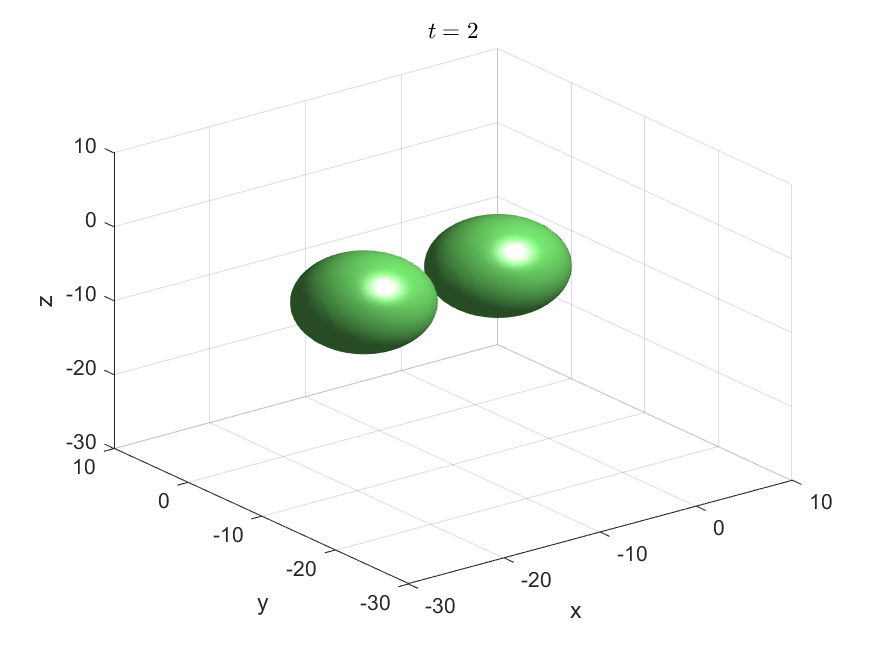}
\end{minipage}
}
\subfigure{
\begin{minipage}[t]{0.3\textwidth}
\centering
\includegraphics[width=5cm]{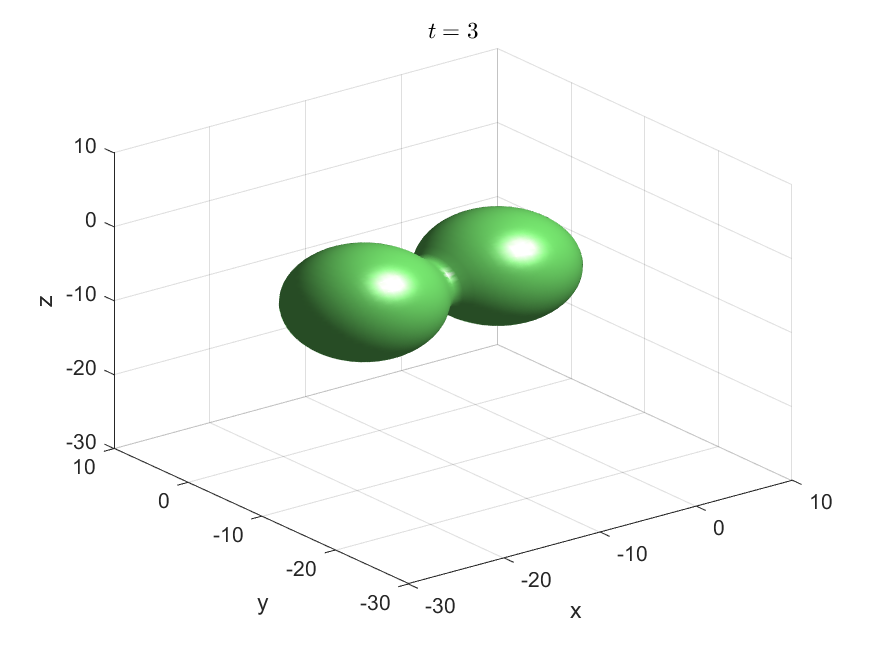}
\end{minipage}
}
\\
\subfigure{
\begin{minipage}[t]{0.3\textwidth}
\centering
\includegraphics[width=5cm]{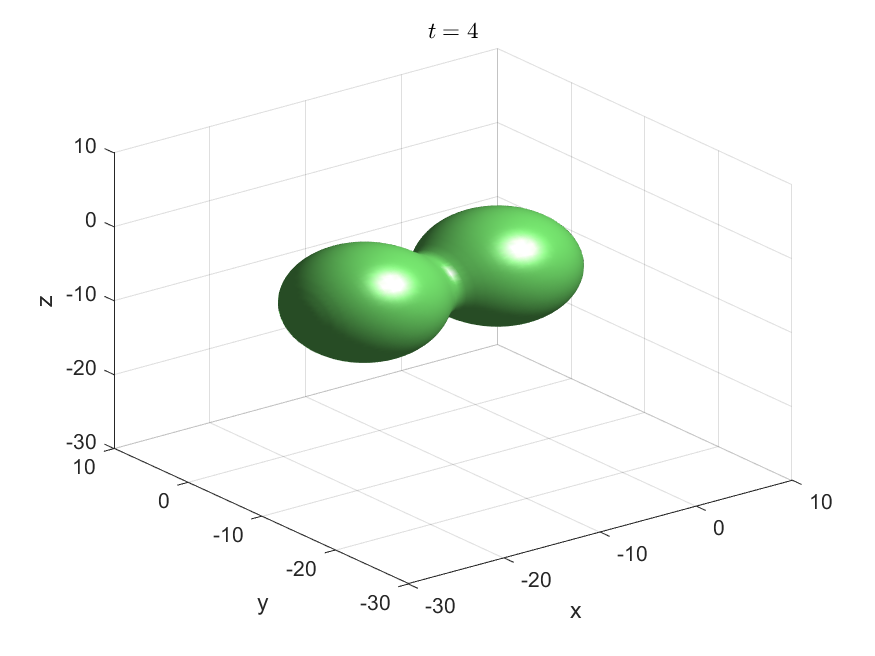}
\end{minipage}
}
\subfigure{
\begin{minipage}[t]{0.3\textwidth}
\centering
\includegraphics[width=5cm]{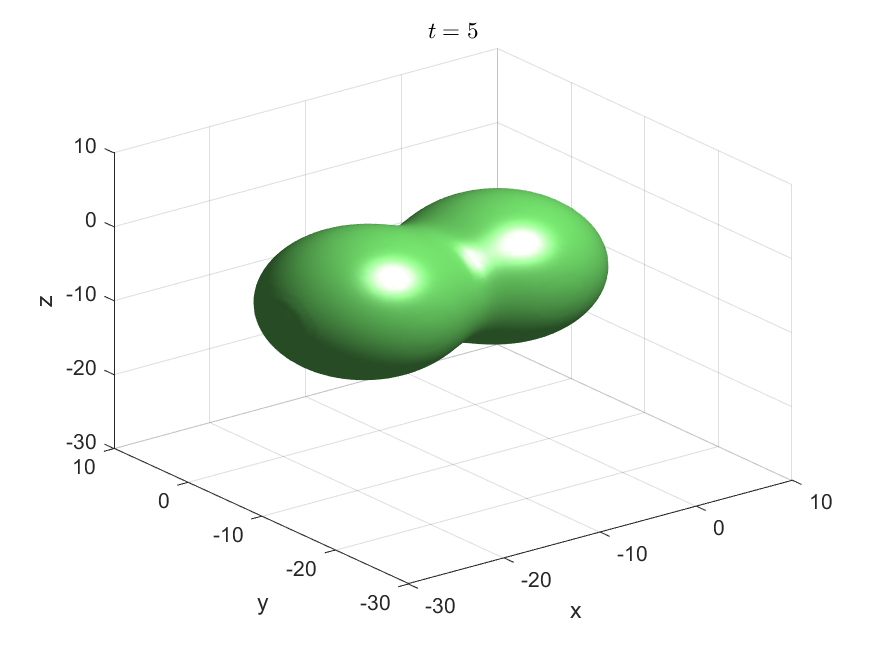}
\end{minipage}
}
\subfigure{
\begin{minipage}[t]{0.3\textwidth}
\centering
\includegraphics[width=5cm]{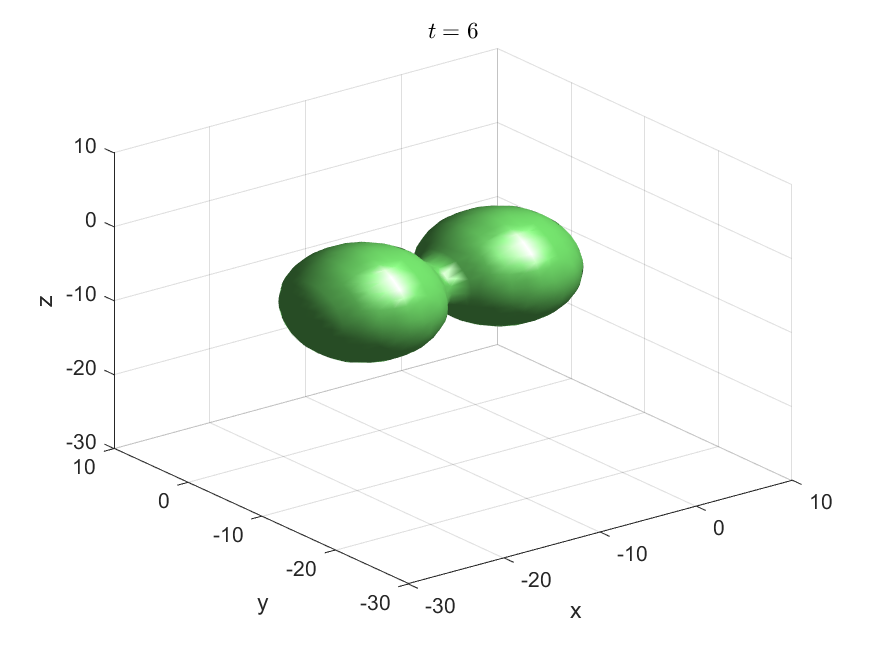}
\end{minipage}
}
\caption{ Isosurfaces for the collisions of two circular solitons in 3D when $\alpha=1.5$. }
\label{afig8}
\end{figure*}
\begin{figure*}[htbp]
\centering
\subfigure{
\begin{minipage}[t]{0.3\textwidth}
\centering
\includegraphics[width=5cm]{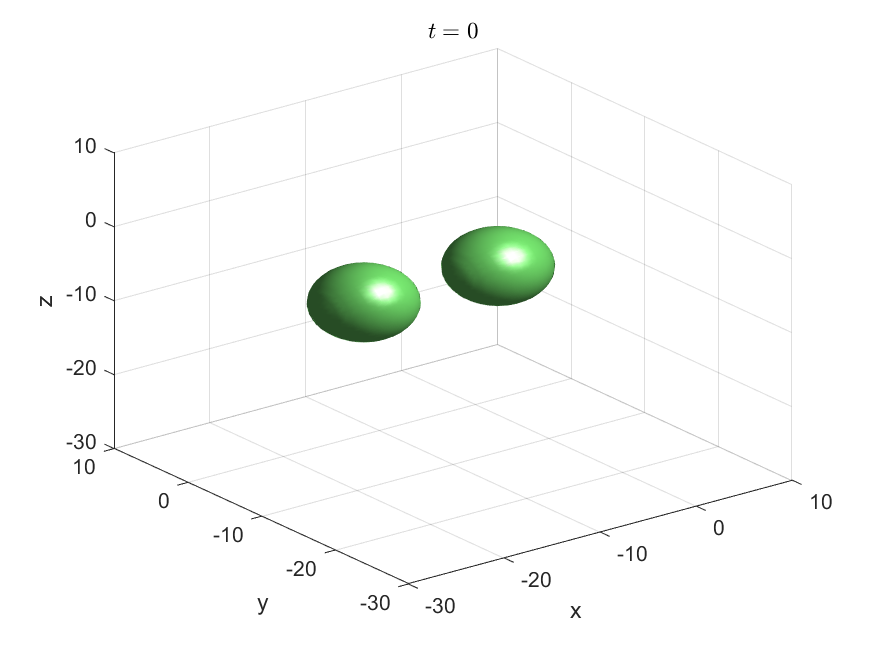}
\end{minipage}
}
\subfigure{
\begin{minipage}[t]{0.3\textwidth}
\centering
\includegraphics[width=5cm]{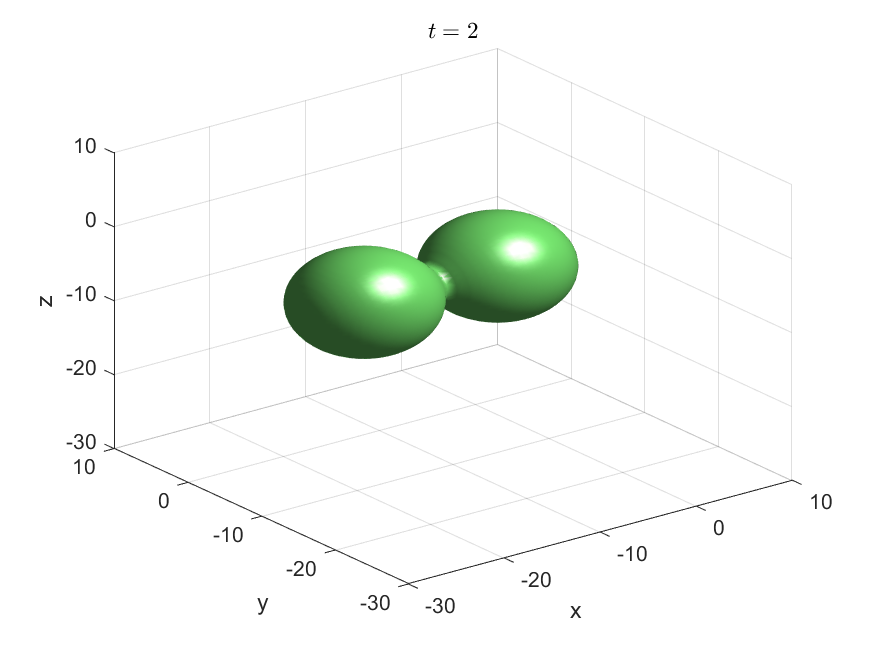}
\end{minipage}
}
\subfigure{
\begin{minipage}[t]{0.3\textwidth}
\centering
\includegraphics[width=5cm]{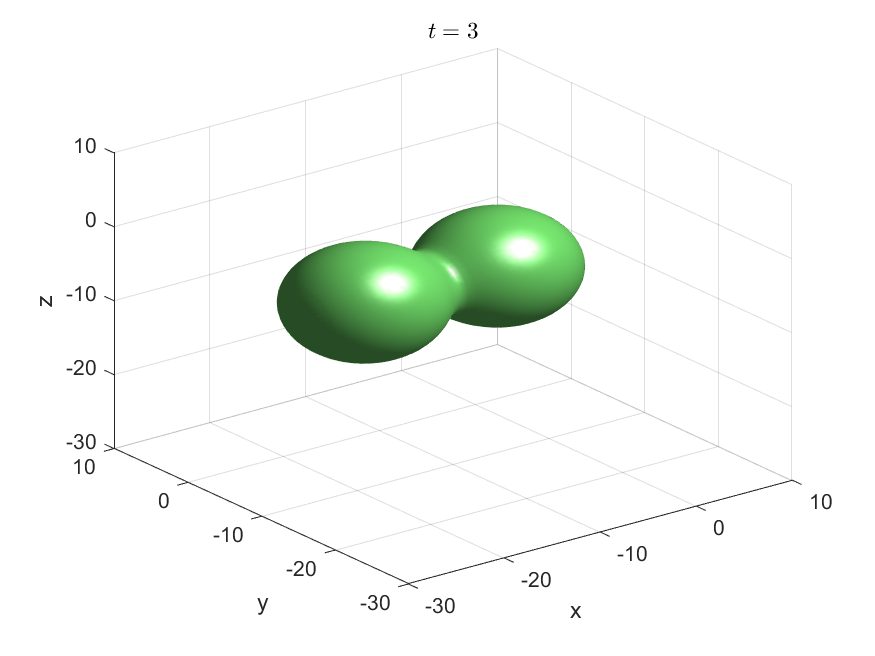}
\end{minipage}
}
\\
\subfigure{
\begin{minipage}[t]{0.3\textwidth}
\centering
\includegraphics[width=5cm]{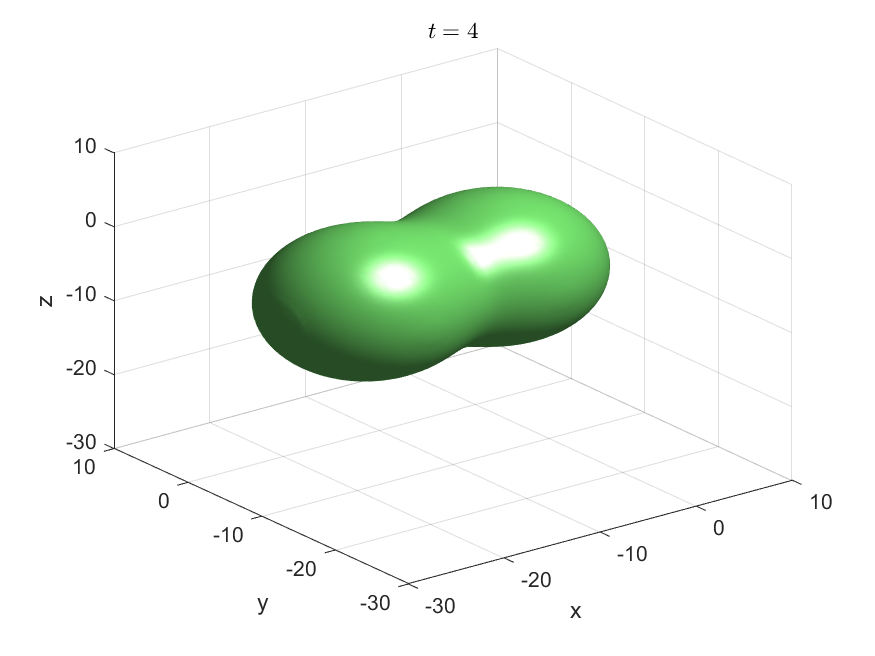}
\end{minipage}
}
\subfigure{
\begin{minipage}[t]{0.3\textwidth}
\centering
\includegraphics[width=5cm]{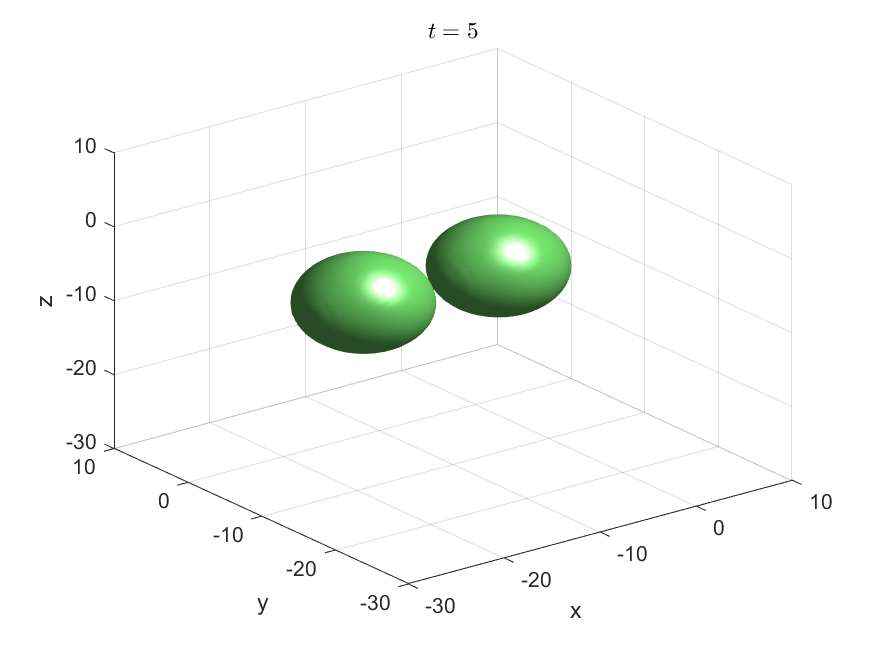}
\end{minipage}
}
\subfigure{
\begin{minipage}[t]{0.3\textwidth}
\centering
\includegraphics[width=5cm]{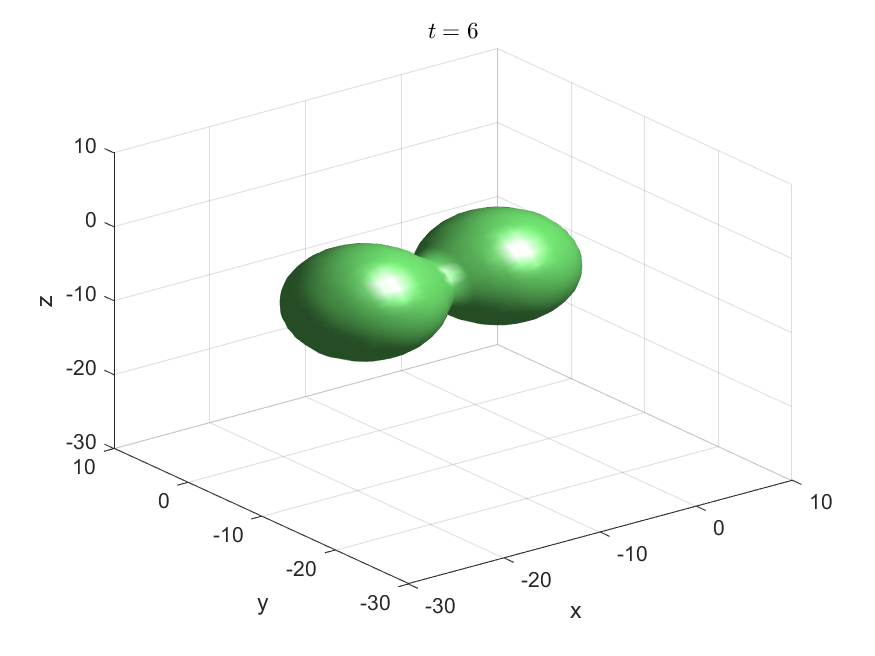}
\end{minipage}
}
\caption{ Isosurfaces for the collisions of two circular solitons in 3D when $\alpha=1.2$. }
\label{afig9}
\end{figure*}

\subsubsection{Collisions of four circular solutions in 3D}
We consider the circular  solitons in 3D with the  initial conditions as
\begin{equation}
\begin{split}
u_0(x, y, z)&=4 \tan ^{-1}\left[\exp \left(\frac{4-\sqrt{(x+3)^2+y^2+(z+3)^2}}{0.436}\right)\right],\quad -30 \leq x, y, z \leq 10, \\
u_1(x, y, z)&=4.13 \operatorname{sech}\left(\frac{4-\sqrt{(x+3)^2+y^2+(z+3)^2}}{0.436}\right),\quad -30 \leq x, y, z\leq 10.
\end{split}
\end{equation}
We choose $ N=64, \tau=10^{-3}$ to discrete the space and time domain. The extension over $x=-10$, $y=0$ and $z=-10$ is included in the solution  based on  the symmetry of the problem. The  isosurfaces of four expanding circular ring solitons in 3D  for different $\alpha$ at time $t=0, 1, 2, 3, 4, 4.5$ are presented in Figures \ref{afig10}-\ref{afig12}. We can see that these four circular solitons are initially independent of each other, gradually colliding and merging, and then separating into four new solitons. Moreover,  as the fractional order $\alpha$ decreases, the shapes of circular solitons change more rapidly and the entire collision period becomes shorter.

\begin{figure*}[htbp]
\centering
\subfigure{
\begin{minipage}[t]{0.3\textwidth}
\centering
\includegraphics[width=5cm]{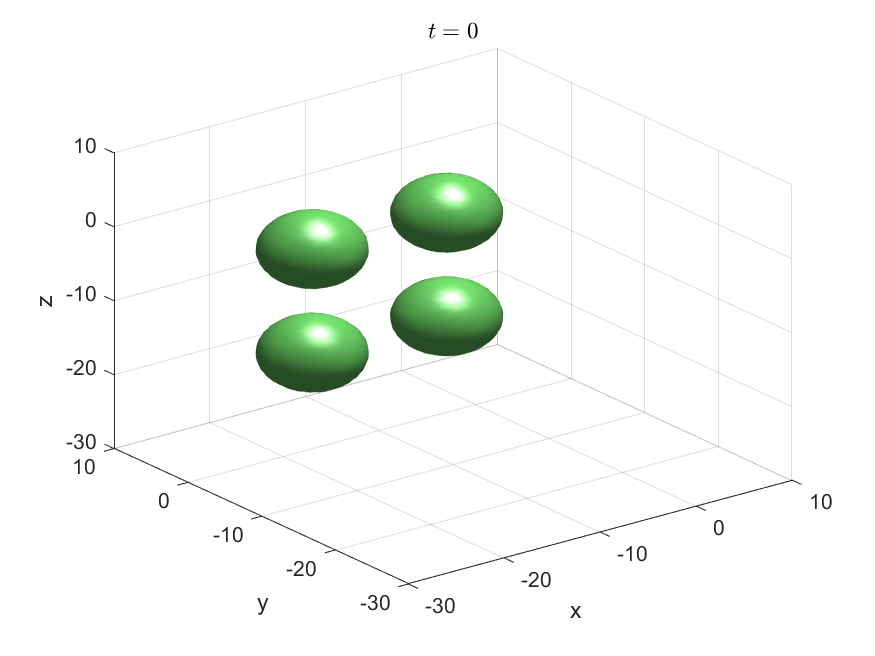}
\end{minipage}
}
\subfigure{
\begin{minipage}[t]{0.3\textwidth}
\centering
\includegraphics[width=5cm]{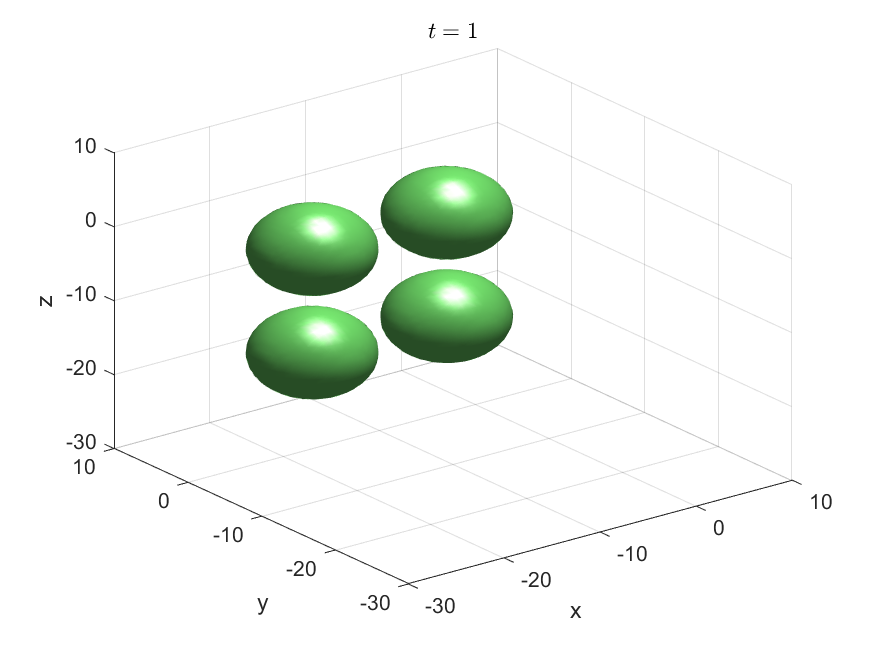}
\end{minipage}
}
\subfigure{
\begin{minipage}[t]{0.3\textwidth}
\centering
\includegraphics[width=5cm]{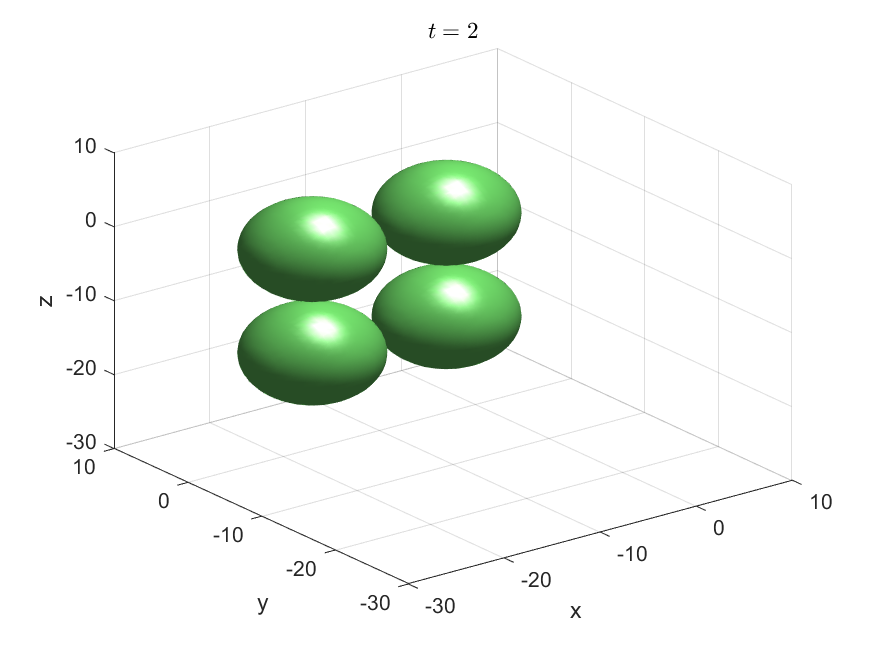}
\end{minipage}
}
\\
\subfigure{
\begin{minipage}[t]{0.3\textwidth}
\centering
\includegraphics[width=5cm]{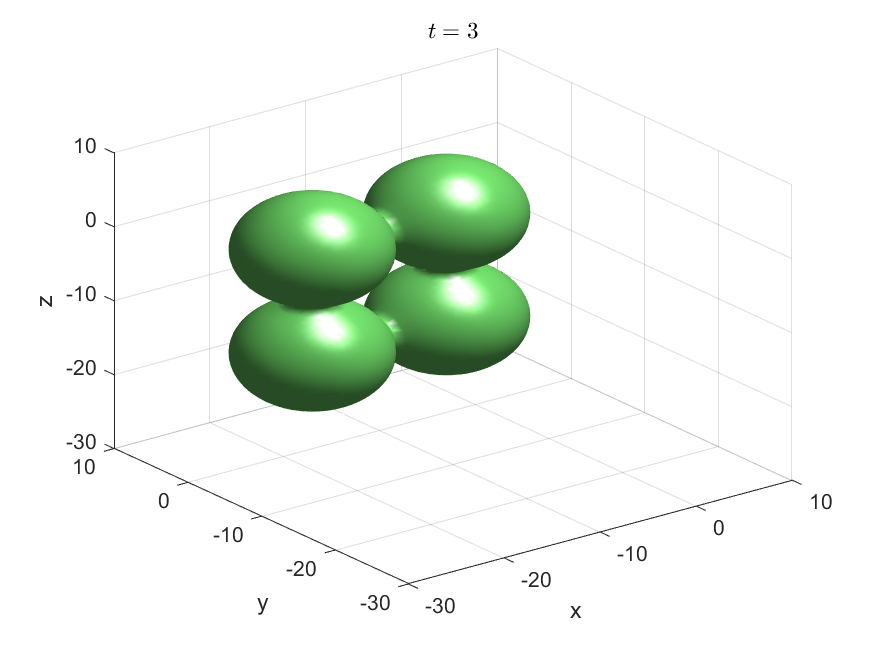}
\end{minipage}
}
\subfigure{
\begin{minipage}[t]{0.3\textwidth}
\centering
\includegraphics[width=5cm]{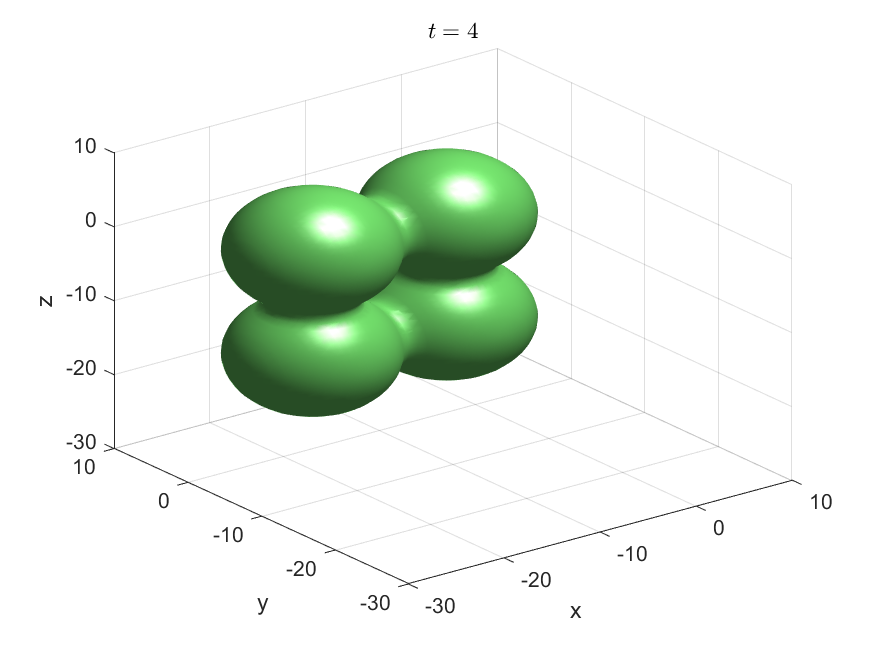}
\end{minipage}
}
\subfigure{
\begin{minipage}[t]{0.3\textwidth}
\centering
\includegraphics[width=5cm]{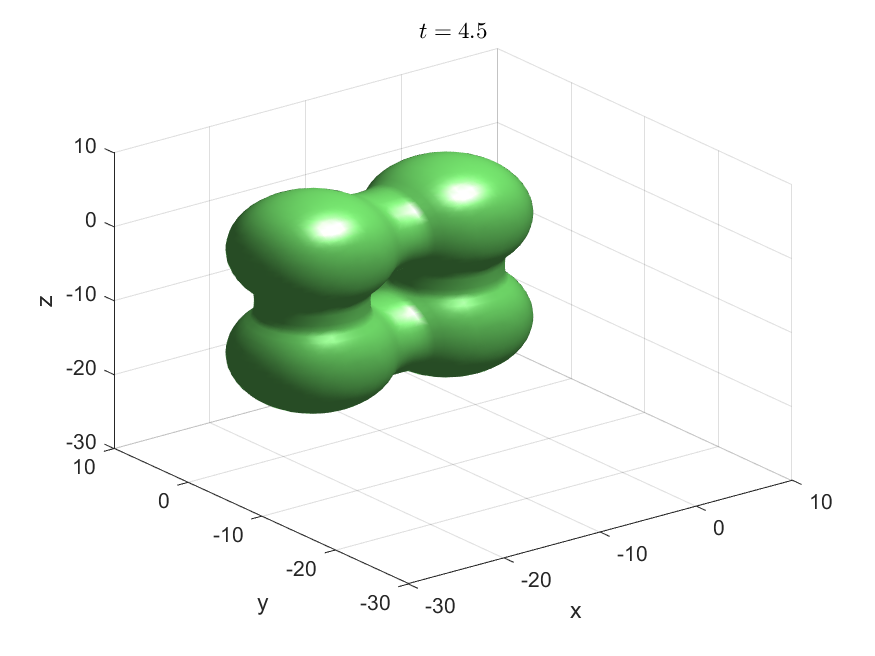}
\end{minipage}
}
\caption{ Isosurfaces for the collisions of four circular solitons in 3D when $\alpha=2$. }
\label{afig10}

\end{figure*}
\begin{figure*}[htbp]
\centering
\subfigure{
\begin{minipage}[t]{0.3\textwidth}
\centering
\includegraphics[width=5cm]{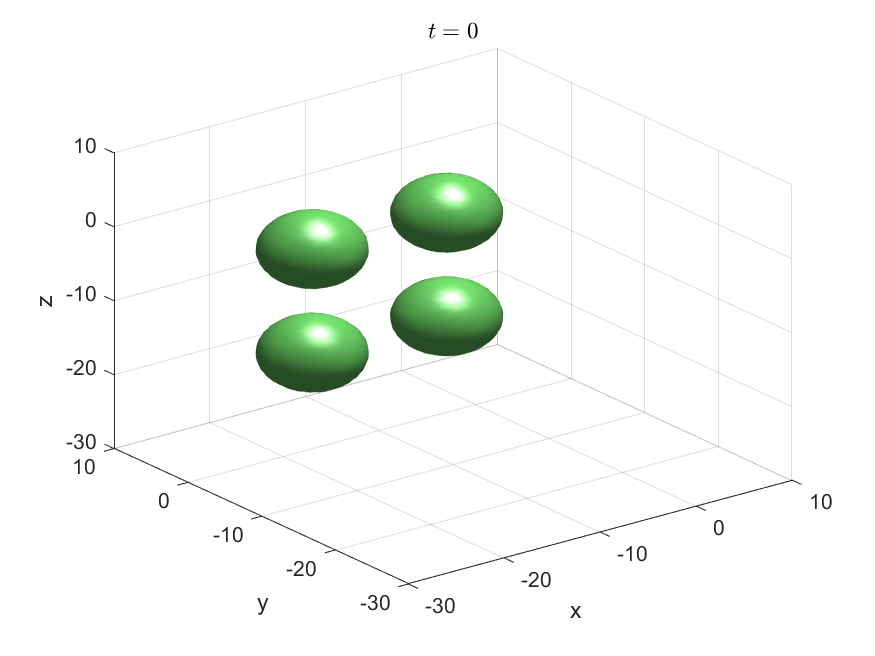}
\end{minipage}
}
\subfigure{
\begin{minipage}[t]{0.3\textwidth}
\centering
\includegraphics[width=5cm]{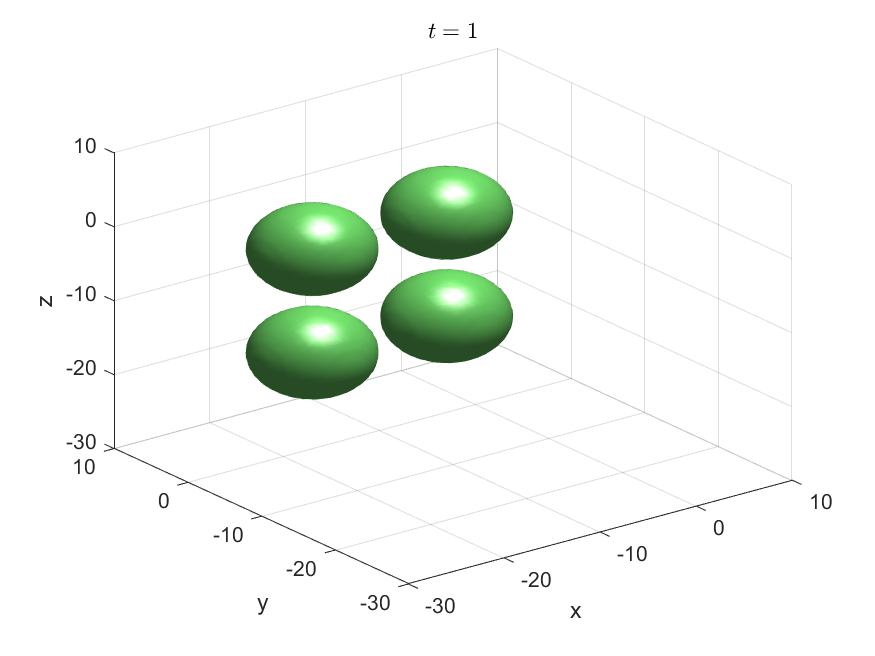}
\end{minipage}
}
\subfigure{
\begin{minipage}[t]{0.3\textwidth}
\centering
\includegraphics[width=5cm]{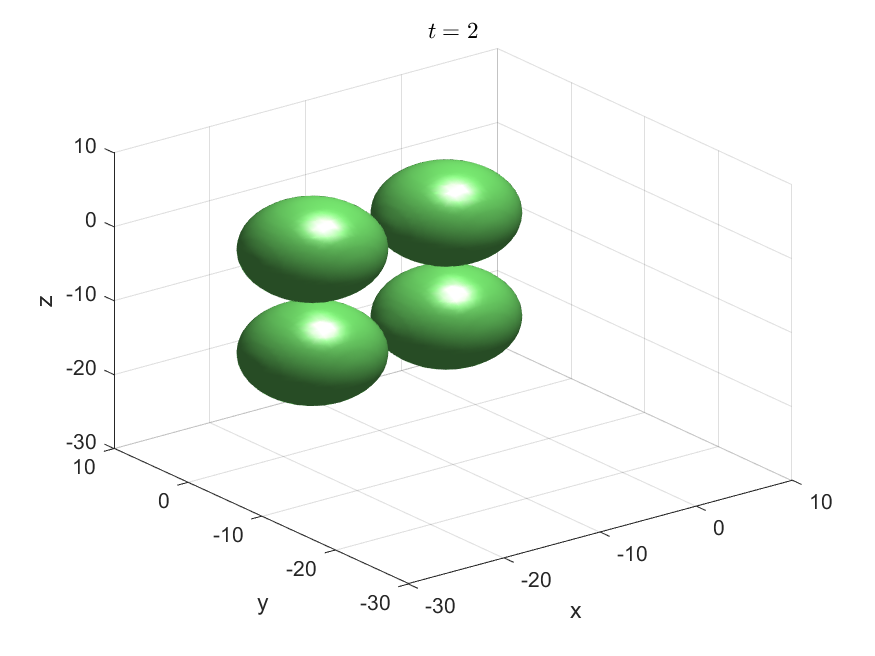}
\end{minipage}
}
\\
\subfigure{
\begin{minipage}[t]{0.3\textwidth}
\centering
\includegraphics[width=5cm]{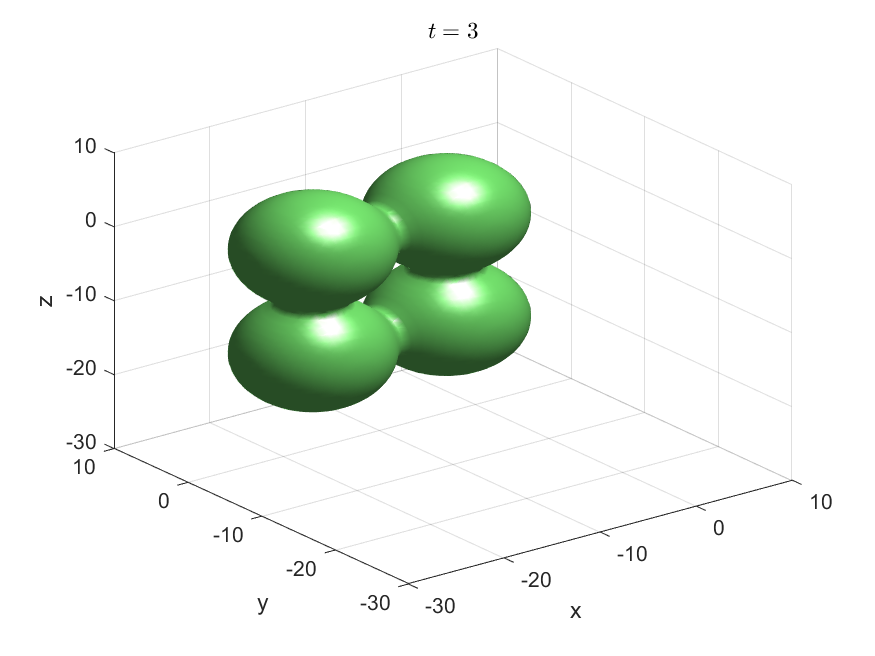}
\end{minipage}
}
\subfigure{
\begin{minipage}[t]{0.3\textwidth}
\centering
\includegraphics[width=5cm]{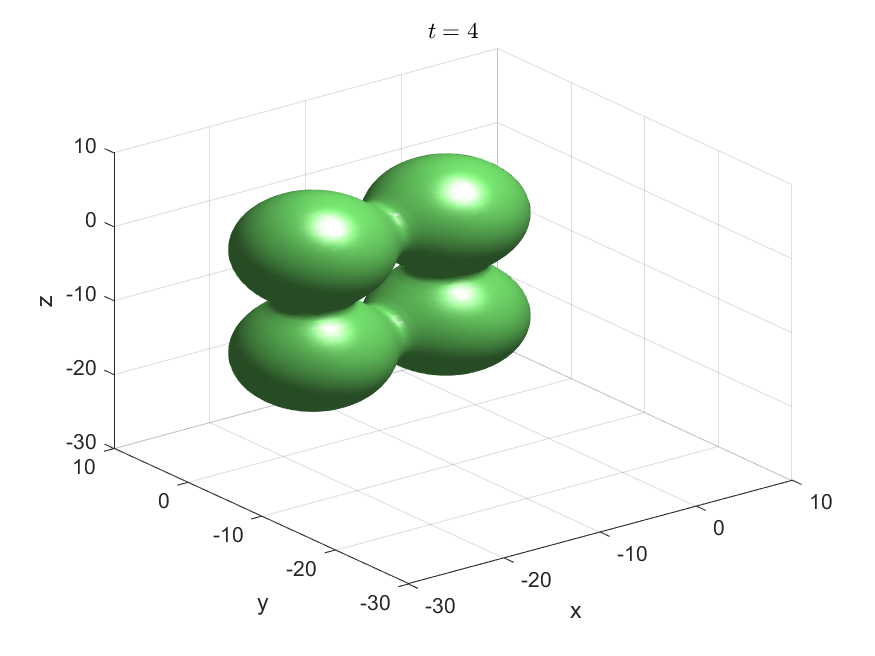}
\end{minipage}
}
\subfigure{
\begin{minipage}[t]{0.3\textwidth}
\centering
\includegraphics[width=5cm]{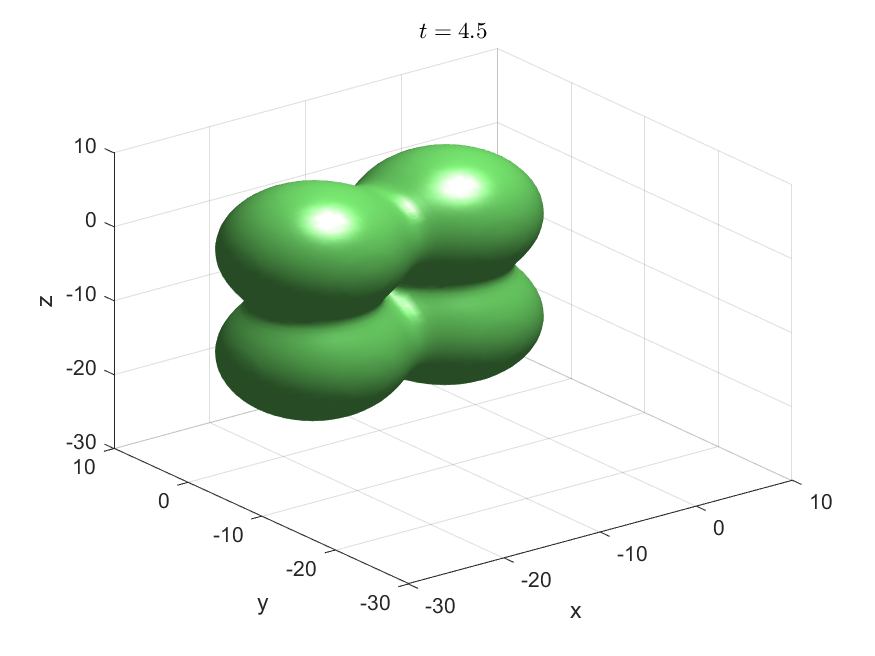}
\end{minipage}
}
\caption{ Isosurfaces for the collisions of four circular solitons in 3D when $\alpha=1.5$. }
\label{afig11}
\end{figure*}

\begin{figure*}[htbp]
\centering
\subfigure{
\begin{minipage}[t]{0.3\textwidth}
\centering
\includegraphics[width=5cm]{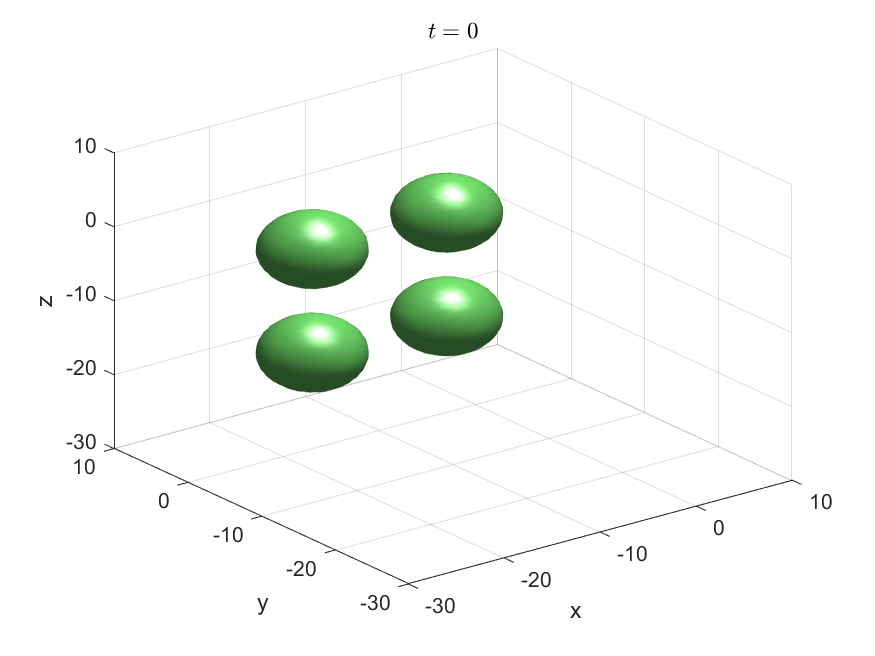}
\end{minipage}
}
\subfigure{
\begin{minipage}[t]{0.3\textwidth}
\centering
\includegraphics[width=5cm]{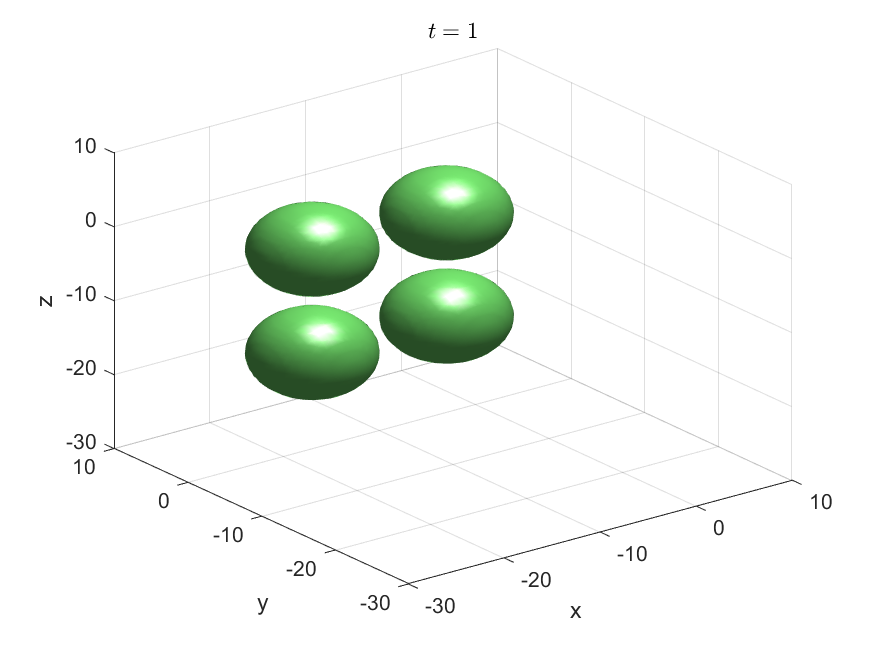}
\end{minipage}
}
\subfigure{
\begin{minipage}[t]{0.3\textwidth}
\centering
\includegraphics[width=5cm]{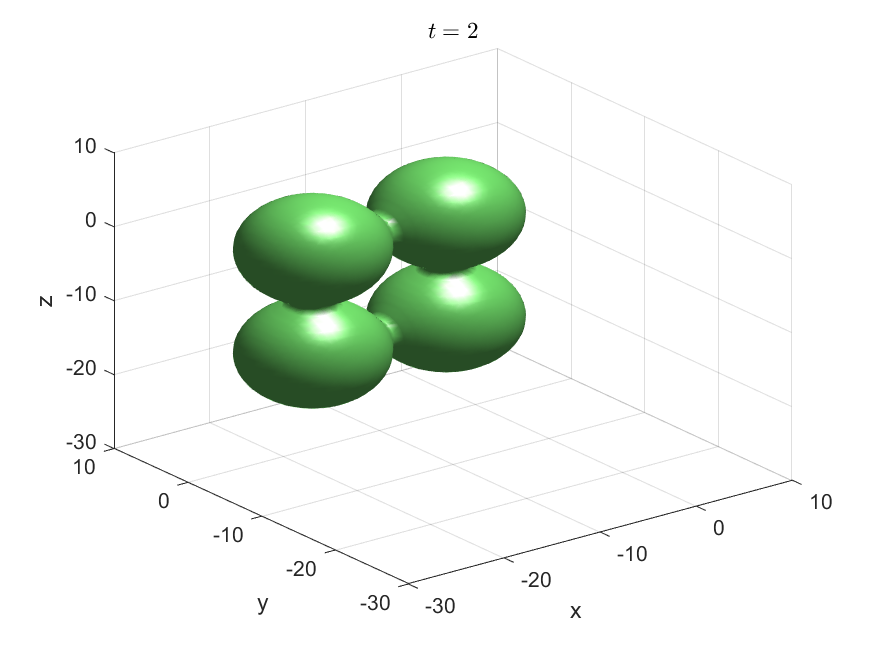}
\end{minipage}
}
\\
\subfigure{
\begin{minipage}[t]{0.3\textwidth}
\centering
\includegraphics[width=5cm]{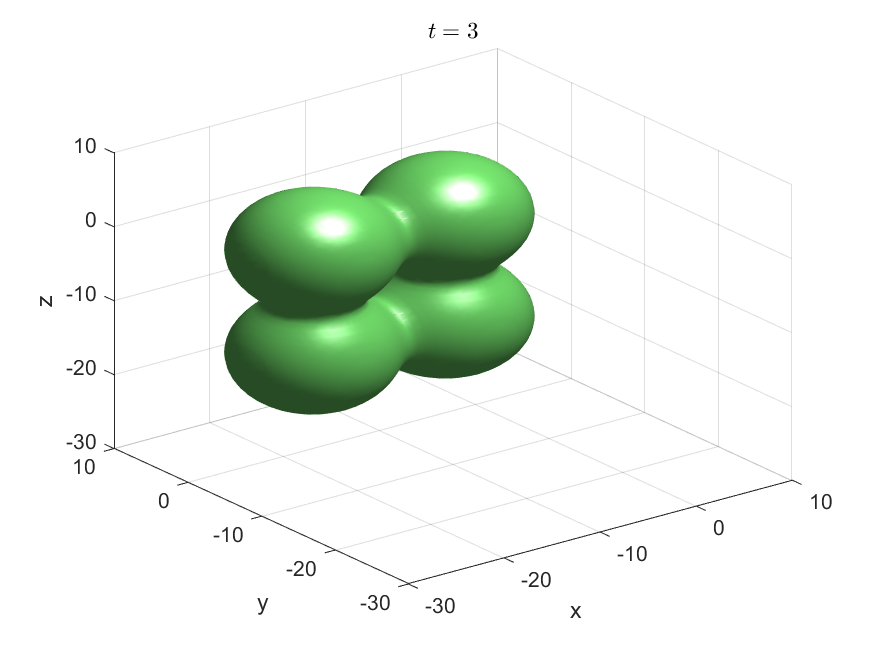}
\end{minipage}
}
\subfigure{
\begin{minipage}[t]{0.3\textwidth}
\centering
\includegraphics[width=5cm]{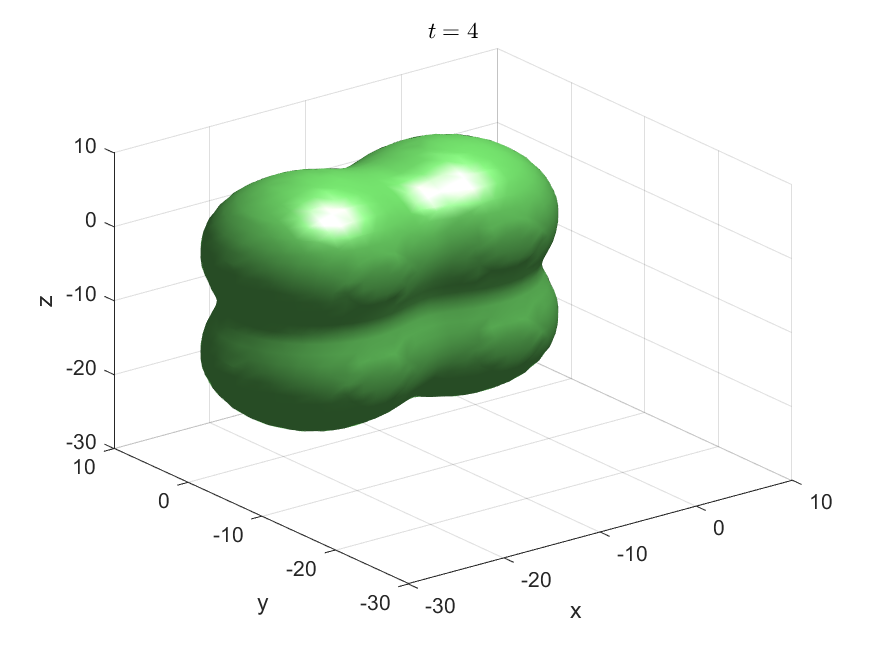}
\end{minipage}
}
\subfigure{
\begin{minipage}[t]{0.3\textwidth}
\centering
\includegraphics[width=5cm]{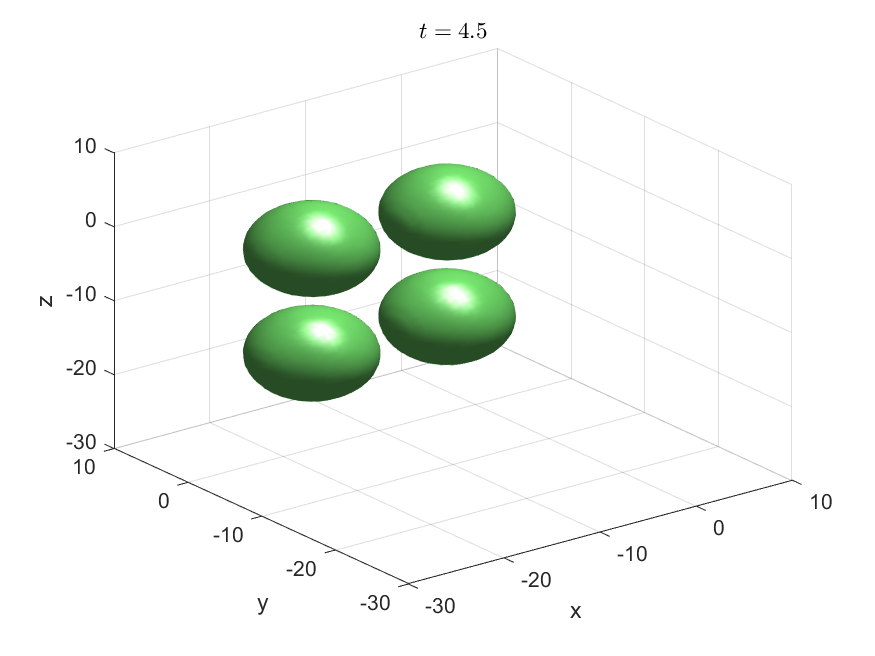}
\end{minipage}
}
\caption{ Isosurfaces for the collisions of four circular solitons in 3D when $\alpha=1.2$. }
\label{afig12}
\end{figure*}
\section{Conclusions}
\label{sec:6}
The improved uniform error bounds $O(h^m+\varepsilon^2\tau^2)$ in $H^{\alpha/2}-$norm are established for the long-time dynamics of the high-dimensional NSFSGE.  We first separate a linear part from the sine function  of the NSFSGE and transform the NSFSGE to an equivalent NSFSE. Then the  numerical scheme based on the time-splitting method in time and the Fourier pseudo-spectral method in space is developed, which can obtain the second-order convergence accuracy in time and  the spectral convergence accuracy in space. By employing the RCO technique, the improved uniform error bounds $O(\varepsilon^2\tau^2)$ for the semi-discrete scheme and $O(h^m +\varepsilon^2\tau^2)$ for the full discrete scheme  are obtained at time $T_\varepsilon$,  which shows the  explicit relationship between the error and $\varepsilon$.  The error bounds $O(h^m +\varepsilon^2\tau^2)$ for the discrete energy are also given. Further, the TSFP method and the improved error bounds are extended to the complex NSFSGE and the oscillatory complex NSFSGE. Finally, we give some numerical examples  in 2D or 3D verifying that the error bounds in the theoretical analysis are sharp. We also give some  applications  to demonstrate the differences in the dynamic behaviors between the fractional sine-Gordon equation and classical  sine-Gordon equation, which implies that the fractional order $\alpha$  has an obvious effect on dynamic behaviors of the NSFSGE. The numerical method and techniques used in this paper can be extended to the long-time dynamics research of other nonlinear fractional equations.

\section*{CRediT authorship contribution statement}
\textbf{Junqing Jia:} Conceptualization, Methodology, Writing - original draft, Software.
\textbf{Xiaoqing Chi:} Conceptualization, Methodology, Writing - review \& editing, Software.
\textbf{Xiaoyun Jiang:} Conceptualization, Methodology, Writing - review \& editing, Supervision.

\section*{Declaration of competing interest}
The authors declare that they have no known competing financial interests or personal relationships that could have appeared to influence the work reported in this paper.

%\section*{Declaration of generative AI in scientific writing}
%The authors declare that they do not use artificial intelligence and AI-assisted technologies in the writting.

\section*{Data Availability}
The datasets analysed during the current study are available from the corresponding author on reasonable request.

%\clearpage
\section*{Acknowledgments}
This work has been supported by the Key International (Regional) Cooperative Research Projects of the National Natural Science Foundation of China (Grants No. 12120101001), the National Natural Science Foundation of China (Grants No. 12301516), the Major Basic Research Project of Natural Science Foundation of Shandong Province (Grants No. ZR2021ZD03), the  Natural Science Youth Foundation of Shandong Province (Grants No. ZR2023QA072), the Postdoctoral Fellowship Program of CPSF (Grants No. GZC20231474).

\end{document}